\titleformat{\section}[runin]  % Remove [runin] if text need to be on next line
  {\normalfont\bfseries}{\thesection.}{0.4em}{\addperiod}
\titleformat{\subsection}[runin] % Remove [runin] if text on next line
  {\normalfont\bfseries}{\thesubsection.}{0.4em}{\addperiod}
\newcommand{\addperiod}[1]{#1.}
\numberwithin{equation}{section}
\theoremstyle{plain}
\newtheoremstyle{mystyle} % name
    {\topsep}                    % Space above
    {\topsep}                    % Space below
    {}                   % Body font
    {}                           % Indent amount
    {\bfseries}                   % Theorem head font
    {.}                          % Punctuation after theorem head
    {.5em}                       % Space after theorem head
    {}  % Theorem head spec (can be left empty, meaning ‘normal’)
\newtheorem*{theorem*}{Theorem}
\newtheorem{theorem}{Theorem}
\newtheorem{lemma}{Lemma}
\newtheorem{proposition}{Proposition}
\newtheorem{corollary}{Corollary}
\theoremstyle{remark}
\newtheorem{remark}{Remark}
\theoremstyle{mystyle}
 \def\bbetastar{\bbeta^{*}}\def\sigstar{{\sigma^{*}}}
 \def\Sstar{S^{*}}
\def\bugj{\bu_{\Gj}}
\def\bbetagj{\bbeta_{\Gj}}
\def\hbbetagj{\hbbeta_{\Gj}}
\def\bbetagstar{\bbeta_{G}^{*}}\def\hbbetag{\hbbeta_{G}}
\def\matXgj{\matX_{\Gj}}
\def\matXg{\matX_{G}}
\def\matZg{\matZ_{G}}
\def\Gj{G_{j}}\def\Gk{G_{k}}\def\dj{d_{j}}
\def\bmugj{\bmu_{\Gj}}
\def\SCIF{{\rm SCIF}}
\def\eps{\epsilon}
\newcommand{\bel}{\begin{eqnarray}\label}
\newcommand{\eel}{\end{eqnarray}}
\newcommand{\bes}{\begin{eqnarray*}}
\newcommand{\ees}{\end{eqnarray*}}
\def\convd{\stackrel{{\rm D}}{\longrightarrow}}
\def\toD{\convd}
\def\benu{\begin{enumerate}}
\def\eenu{\end{enumerate}}
\def\argmax{\mathop{\rm arg\, max}}
\def\argmin{\mathop{\rm arg\, min}}
\def\Re{{\mathbb{R}}}
\def\complex{\mathop{{\rm I}\kern-.58em\hbox{\rm C}}\nolimits}
\def\pa{\partial}
\def\rank{\hbox{rank}}
\def\Rem{\hbox{Rem}}
\def\trace{\hbox{trace}}
\def\Var{\hbox{Var}}
\def\supp{\hbox{supp}}
\def\mathbold{\boldsymbol} %\def\mathbold{\mathbf}
\def\ba{\mathbold{a}}
\def\matA{{\rm \textbf{A}}}
\def\bb{\mathbold{b}}
\def\hbb{{\widehat{\bb}}}
\def\matB{{\rm \textbf{B}}}
\def\scrC{{\mathscr C}}
\def\scrC{{\mathscr C}}
\def\bfe{\mathbold{e}}
\def\calE{{\cal E}}
\def\bbE{\mathbb{E}}
\def\bff{\mathbold{f}}
\def\bh{\mathbold{h}}
\def\hbar{\overline{h}}
\def\matI{{\rm \textbf{I}}}
\def\calL{{\cal L}}
\def\bm{\mathbold{m}}
\def\hbm{{\widehat{\bm}}}
\def\sfN{\mathsf{N}}
\def\calO{{\cal O}}
\def\bP{\mathbold{P}}
\def\bbP{\mathbb{P}}
\def\matP{{\rm \textbf{P}}}
\def\bQ{\mathbold{Q}}
\def\matQ{{\rm \textbf{Q}}}
\def\calR{{\cal R}}
\def\bbR{\mathbb{R}}
\def\bbS{\mathbb{S}}
\def\bu{\mathbold{u}}
\def\matU{{\rm \textbf{U}}}
\def\bv{\mathbold{v}}
\def\matV{{\rm \textbf{V}}}
\def\bw{\mathbold{w}}
\def\bx{\mathbold{x}}
\def\matX{{\rm \textbf{X}}}
\def\by{\mathbold{y}}
\def\bz{\mathbold{z}}
\def\tbz{{\widetilde{\bz}}}
\def\bZ{\mathbold{Z}}
\def\matZ{{\rm \textbf{Z}}}
\def\bzero{\mathbold{0}}
\def\bbeta{\mathbold{\beta}}\def\hbeta{\widehat{\beta}}
\def\hbbeta{{\widehat{\bbeta}}}
\def\bgamma{\mathbold{\gamma}}\def\hgamma{\widehat{\gamma}}
\def\bGamma{\mathbold{\Gamma}}\def\hbGamma{{\widehat{\bGamma}}}
\def\bDelta{\mathbold{\Delta}}
\def\vepsa{\varepsilon}\def\eps{\epsilon}
\def\bepsa{\mathbold{\vepsa}}
\def\btheta{\mathbold{\theta}}
\def\lam{\lambda}
\def\hlam{\widehat{\lam}}
\def\bLambda{\mathbold{\Lambda}}
\def\bmu{\mathbold{\mu}}
\def\hbmu{{\widehat{\bmu}}}
\def\hsigma{\widehat{\sigma}}
\def\bSigma{\mathbold{\Sigma}}\def\hbSigma{{\widehat{\bSigma}}}
\def\bomega{\mathbold{\omega}}
\def\bOmega{\mathbold{\Omega}}
\newcommand{\norm}[2]{\|#1\|_{#2}}
\begin{document}

\begin{frontmatter}
% "Title of the paper"
\title{{The Benefit of Group Sparsity in Group Inference with De-biased Scaled Group Lasso}}
\runtitle{Group Inference}
\begin{aug}
  \author{\fnms{ Ritwik }  \snm{Mitra}\corref{}\thanksref{}\ead[label=e1]{rmitra@princeton.edu}}
    \address{ORFE, Princeton University, NJ 08540}
    \printead{e1}\\
  \and
  \author{\fnms{Cun-Hui } \snm{Zhang}\thanksref{t4}\ead[label=e2]{czhang@stat.rutgers.edu}}
  \address{Dept. of Statistics \& Biostatistics, Rutgers University, NJ 08854}
    \printead{e2}
%  \and
%  \author{\fnms{Author}  \snm{3}\thanksref{t3,t4}%
%  \ead[label=e3]{email3}}%
 % \ead[label=u1,url]{http://www.foo.com}}
% Thanksref gives the labels for authors. Put as many labels as there are infos for each author
% ead gives email address

%\thankstext{t1}{\printead{e1}}
%\thankstext{t2}{\printead{e2}}
%\thankstext{t3}{\printead{e3}}
\thankstext{t4}{Research was supported in part by the NSF Grants
DMS-11-06753 and DMS-12-09014.}
 % \thankstext{t3}{\printead{e3}}
 \runauthor{R.Mitra \& C.-H. Zhang}

  %\affiliation{Some University and Another University}

 % \address{Address of the Third author,\\
 %         \printead{e3,u1}}

\end{aug}

%%%%%%%%%%%%%%%%%%%%%%%%%%%%%%%%%%%%%%%%%%%                                                     ABSTRACT 
%%%%%%%%%%%%%%%%%%%%%%%%%%%%%%%%%%%%%%%%%%

\begin{abstract}
We study confidence regions and approximate chi-squared tests for variable groups 
in high-dimensional linear regression. 
When the size of the group is small, low-dimensional projection estimators for individual 
coefficients can be directly used to construct efficient confidence regions and p-values 
for the group. 
However, the existing analyses of low-dimensional projection estimators do not directly carry 
through for chi-squared-based inference of a large group of variables without inflating the 
sample size by a factor of the group size. 
We propose to de-bias a scaled group Lasso for chi-squared-based statistical inference for 
potentially very large groups of variables. 
We prove that the proposed methods capture the benefit of group sparsity under proper 
conditions, for statistical inference of the noise level and variable groups, large and small. 
Such benefit is especially strong when the group size is large. 

\end{abstract}

%%%%%%%%%%%%%%%%%%%%%%%%%%%%%%%%%%%%%%%%%%%                                   Subject  Classification & Keyword 
%%%%%%%%%%%%%%%%%%%%%%%%%%%%%%%%%%%%%%%%%%
%\begin{keyword}[class=MSC]
%\kwd[Primary ]{9999}
%\kwd{9999}
%\kwd[; secondary ]{9999}
%\end{keyword}

\begin{keyword}
\kwd{Group Inference, Asymptotic Normality, Relaxed Projection, Chi-Squared Distribution, 
Bias Correction, Relaxed Projection}
\end{keyword}
%%%%%%%%%%%%%%%%%%%%%%%%%%%%%%%%%%%%%%%%%

\end{frontmatter}
%%%%%%%%%%%%%%%%%%%%%%%%%%%%%%%%%%%%%%%%%%%                                   Supplementary Material
%%%%%%%%%%%%%%%%%%%%%%%%%%%%%%%%%%%%%%%%%%

% AOS,AOAS: If there are supplements please fill:
%\begin{supplement}[id=suppA]
%  \sname{Supplement A}
%  \stitle{Title}
%  \slink[doi]{10.1214/00-AOASXXXXSUPP}
%  \sdatatype{.pdf}" 
%  \sdescription{Some text}
%\end{supplement}

%%%%%%%%%%%%%%%%%%%%%%%%%%%%%%%%%%%%%%%%%

%%%%%%%%%%%%%%%%%%%%%%%%%%%%%%%%%%%%%%%%%%%%%%%%%%%%%%%%%%%%%%%%%%%%%%%%%%%%%%%%%%%%%%%%%%%%%%%%%%%%%%%%%%%%%%%%%%%%%%%%%%%%%%%%%%%%%%%%%%%%%%%%%%%%%%%%%%%%%%%%%%%%%%%%%%%%%%%%%%%%%%%%%%%%%%%%%%%%%%%%%%%%%%%%%%%%%%%%%%%%%%%%%%%%%%%%%%%%%%%%%%%%%%%%%%%%%%%%%%%%%%%%%%%%%%%%%%%%%%%%%%%%%%%%%%%%%%%%%%%%%%%%%%%%%%%%%%%%%%%%%%%%%%%%%%%%%%%%%%%%%%%

\section{Introduction}
We consider the linear regression model
\begin{align}\label{eq:mod1}
\by = \matX\bbetastar + \bepsa,
\end{align}
where $\matX = (\bx_1,\ldots,\bx_p) \in \Re^{n\times p}$ is a design matrix, $\by \in \Re^{n}$ is a response vector, 
$\bepsa \sim \sfN_{n}(\bzero,\sigma^{2}\matI_{n})$ with an unknown noise level $\sigma$, and $\bbetastar = (\beta^{*}_1,\ldots,\beta^{*}_p)^T \in \Re^{p}$ is the  vector of unknown true regression coefficients. 
We are interested in making statistical inference about a group of coefficients 
$\bbetastar_G=(\beta^{*}_j, j\in G)^T$. For small $p$, the $F$-distribution, which is approximately chi-squared with proper normalization, provides classical confidence regions for $\bbetastar_{G}$ and p-values for testing 
$\bbetastar_{G}$. We want to construct approximate versions of such procedures for potentially very large groups in high-dimensional models where $p$ is large, possibly much larger than $n$. 

The study of asymptotic inference for parameter estimates in high dimensional regression has experienced a flurry of research activities in recent years. Many attempts have been made to assess the model selected by high dimensional regularizers; for example, some early work was done in \cite{KnightFu00}, 
sample splitting was considered in \cite{WasRoe09} and \cite{mein09}, 
and subsampling was considered in  \cite{mein10} and \cite{shah13}.  See \cite{buhlsara11} for more detailed account of some of these methods.
\cite{LeebP06} proved that the sampling distribution of statistics based on selected models is not estimable. \cite{BerkBZ10} proposed 
%%%
conservative approaches. Alternative approaches were proposed in \cite{Lockhart14} and \cite{Mein13}. 

Recent works in \cite{Zhang2014}, \cite{VandeGeer2013} 
and \cite{Javanmard2013b} among others are more relevant to the line of research 
we have adopted in the current work, which we describe in some detail. 
For the effect of a preconceived variable, 
\cite{Zhang2014} pointed out the feasibility of regular statistical inference at the parametric $n^{-1/2}$ 
rate by correcting the bias of a regularized estimator of the entire coefficient vector, such as the Lasso, 
and proposed a low-dimensional projection estimator (LDPE) to carry out the task. 
The basic idea is to project the residual of the regularized estimator to the direction of a certain 
score vector which is approximately orthogonal to all variables other than the preconceived one. 
%proposed a low-dimensional projection estimator (LDPE) for regular statistical inference 
%at the parametric $n^{-1/2}$ rate under proper conditions. 
%The basic idea of the LDPE is to correct the bias of high-dimensional regularized estimators by projecting 
%its residual to the direction of a score vector satisfying certain checkable properties. 
Such bias correction, which has been called de-biasing, is parallel to 
correcting the bias of nonparametric estimators in semiparametric inference \citep{BKRW93}. 
In a general setting, \cite{ZhangOber11} developed an alternative formulation of the LDPE 
and provided formulas for the direction of the least favorable submodel 
and the Fisher information bound for the asymptotic variance. 
In linear regression, the least favorable submodel more explicitly connects 
the Lasso estimator of the score vector to column-by-column estimation of 
the precision matrix for random designs \citep{CaiLL11, SunZ13}. 
%The basic idea of \cite{Zhang2014} and \cite{ZhangOber11}   
%is to correct the bias of high-dimensional regularized estimators by 
%projecting its residual to a direction close to that of the efficient score. 
\cite{Buhl13} developed and studied methods to correct the bias of ridge regression. 
\cite{Belloni14} considered estimation of treatment effects with a large number of controls. 
%\cite{Zhang2014} proposed two algorithms to compute the score vector: 
%the residual of a point in the Lasso path in regression of the preconceived design variable 
%against other design variables, 
%and an optimization scheme subject to certain approximate orthogonality properties of the score vector 
%by quadratic programming. 
\cite{VandeGeer2013} proved that the LDPE attains the Fisher information bound 
under a sparsity condition on the precision matrix and 
made a connection between the Lasso estimation of the score vector and 
the inversion of the Karush-Kuhn-Tucker (KKT) conditions through the precision matrix. 
Moreover, \cite{VandeGeer2013} extended their results to generalized linear models (GLMs) with an innovative way of analyzing such models.
%The analysis of \cite{VandeGeer2013} requires columnwise sparsity of $\bSigma^{-1}$ 
%e.g. a bound on the degree of the precision matrix. 
\cite{Javanmard2013b} proved that when a {}{quadratic} programming method of \cite{Zhang2014} 
is used {}{to} estimate the score vector, the LDPE attains the Fisher information bound 
for Gaussian designs without requiring sparsity condition on the precision matrix; 
see Subsection \ref{subsec:biascorrect} for further discussion. 

In a separate work, \cite{Javanmard2013a} considered inference with lower sample size requirements 
when the design is known to be standard Gaussian. 
\cite{SunZ12PartialCorr}, \cite{RSZZ12} and \cite{Jankova14} considered extensions to 
graphical models and precision matrix estimation. 

It is possible to directly extend the above described de-biasing method to the case of grouped variables. 
In fact, the LDPE provides %from \cite{Zhang2014}, we have
\bel{expansion}
\sqrt{n}\big(\hbbeta_G - \bbetastar_G\big)  = \sfN_{|G|}\big(\bzero, \sigma^2 \matV_{G,G}\big) + \Rem_G
\eel
along with a known covariance structure $\matV_{G,G}$ and 
$\|\Rem_G\|_\infty\lesssim \|\bbeta^*\|_0(\log p)/\sqrt{n}$ \citep{Zhang2014}. 
%when the group size $|G|$ is bounded. 
%For random designs, the above covariance structure matches the Fisher information 
%in the least favorable sub-model \citep{ZhangOber11}. 
However, this does not directly provide a sharp error bound for 
the $\ell_2$- or equivalently chi-squared-based group inference for large groups. 
As $\Var(\chi_{|G|}) \approx 1/2$, the trivial bound 
$\|\Rem_G\|_2 \lesssim |G|^{1/2}\|\bbeta^*\|_0(\log p)/\sqrt{n}=o(1)$ for group inference  
leads to an extra factor $|G|$ in the sample size requirement. 
Thus, the group inference problem is unsolved when one is unwilling to impose such a strong condition 
on $n$. 
Our goal is to construct $\hbbeta_G$ satisfying $\|\Rem_G\|_2 = o(1)$ in an 
expansion of the form (\ref{expansion}) with potentially very large $|G|$. The impact of such a result is certainly beyond the specific problem under consideration. 

Our approach is based on the natural idea that group sparsity can be exploited 
in statistical inference of variable groups. 
To this end, we propose to use a linear estimator to correct the bias of a scaled group Lasso estimator.  This combines and extends the ideas of the group Lasso \citep{Yuan2006} and bias correction \citep{Zhang2014}, and will be shown to capture the benefit of group sparsity in both high-dimensional estimation as in \cite{HZ10} and in bias correction. We note that the type of statistical inference under consideration here is regular in the sense that it does not require model selection consistency, and that it attains asymptotic efficiency in the sense of Fisher information without being super-efficient. 
A characterization of such inference is that it does not require a uniform signal strength condition on informative features, e.g. a lower bound on the non-zero $|\beta_j|$ above an inflated noise level due to model uncertainly, known as the ``beta-min'' condition.

Since our proposed method relies upon a group regularized initial estimator, in the following we provide a 
brief discussion of the literature on the topic. The group Lasso \citep{Yuan2006} can be defined as
\begin{align}\label{eq:grplsoopt-ns}
\hbbeta(\bomega) = \argmin_{\bbeta} \calL_{\bomega}(\bbeta), \quad  \calL_{\bomega}(\bbeta) = \dfrac{\norm{\by-\matX\bbeta}{2}^{2}}{2n} + \sum^{M}_{j=1}\omega_{j}\norm{\bbetagj}{2}, 
\end{align}
where $\{G_j,, 1\le j\le M\}$ forms a partition of the index set $\{1,\ldots,p\}$ of variables. 
It is worthwhile to note that when the group effects are being regularized, 
the choice of the basis $\matX_{G_j}=(\bx_k, k\in G_j)$ within the group may not 
play a prominent role, so that the design is often ``pre-normalized'' to 
satisfy $\matX_{G_j}^T\matX_{G_j}/n = \matI_{G_j\times G_j}$ as in \cite{Yuan2006}. 
The group Lasso and its variants have been studied in \cite{Bach08}, \cite{Kol08}, \cite{obo08}, \cite{NR08}, \cite{LZ09}, \cite{HZ10}, and \cite{Lounici2011} %\cite{simon12} 
among many others. 
\cite{HZ10} characterized the benefit of group Lasso in $\ell_{2}$ estimation, versus the Lasso \citep{Tib96}, 
under the assumption of \emph{strong group sparsity}; 
see (\ref{def:strnggrpsprse}) in Section \ref{sec:groupinf}. 
\cite{HuangMXZ09} and \cite{BrehenyH11} developed methodologies for concave group and bi-level regularization. We refer to \cite{HuangBM12} for further discussion and additional references

Estimation of the scale parameter, or the noise level $\sigma$, 
is also an important aspect of high dimensional regularized regression. 
Due to scale invariance, it is natural to let the groupwise weights in (\ref{eq:grplsoopt-ns}) be 
proportional to the scale parameter $\sigma$. Thus, a consistent estimate of $\sigma$ also becomes 
necessary for truly adaptive estimation of the parameters. 
For the Lasso problem, \cite{Antoniacomm10} and \cite{SZcomm10, Sun2012} 
proposed a scaled Lasso that estimates both the scale parameter $\sigma$ and 
coefficient vector $\bbetastar$, which is closely related to the earlier proposals 
of \cite{ZhangMCP10} and \cite{SBG10}. This scaled Lasso turns out to be equivalent to treating the residual of \citeauthor{BelloniCW11}'s (\citeyear{BelloniCW11}) square-root Lasso estimator of $\bbeta$ as the noise vector in the estimation of of $\sigma$. For group regularization, \cite{BLS14} proposed a square-root group Lasso for adaptive estimation of 
the coefficient vector $\bbeta$. 
In this paper, we study a scaled group Lasso for simultaneous estimation of 
both $\bbeta$ and $\sigma$ with a different weighted $\ell_{2,1}$ penalty 
and prove the benefit of grouping in the estimation of the scale parameter 
in terms of convergence rates. 

This paper is organized as follows. 
In Section \ref{sec:groupinf}, we describe a general procedure for statistical inference of 
groups of variables and provide theoretical guarantees for our results. 
In Section \ref{sec:consisres}, we study the scaled group Lasso needed for the 
construction of estimators in Section~\ref{sec:groupinf}. 
%Section \ref{sec:consisres} verifies the consistency results given in the working 
%assumption (\ref{eq:prelimbetaconsis}) 
%which is behind most of our development in Section \ref{sec:groupinf}. 
In Section \ref{sec:simures}, we present some simulation results to demonstrate the feasibility and 
performance of the proposed methods. % as compared to other current methods. 
In Section \ref{sec:summdis} we provide a brief summary of our results 
and discuss future directions of research. 
Proofs of some technical results are relegated to the \nameref{sec:appendix}.
%Proofs of some additional lemmas are provided in the appendix.
%\subsection{Other related work}\label{subsec:otherwork}

%############################################

\smallskip
%\noindent\emph{Notations:} 
We use the following notation throughout the paper. 
For vectors $\bu \in \Re^{d}$, the $\ell_{p}$ norm is denoted by 
$\norm{\bu}{p} =(\sum^{d}_{k=1}|u_{k}|^{p})^{1/p}$,  
with $\|\bu\|_\infty = \max_{1\le k\le d}|u_k|$ and $\norm{\bu}{0}= \#\{j:u_{j}\neq 0\}$. 
For matrices $\matA$, 
the Moore-Penrose pseudo inverse is denoted by $\matA^\dag$, 
the spectrum norm is denoted by $\|\matA\|_S = \max_{\|\bu\|_2=\|\bv\|_2=1}\bu^T\matA\bv$, 
the Frobenius norm by $\|\matA\|_F =\{\trace(\matA^T\matA)\}^{1/2}$, and 
the nuclear norm by $\|\matA\|_N =\max_{\|\matB\|_S=1}\trace(\matB^T\matA)$. Given $A\subset \{1,\cdots, p\}$, for any vector $\bu\in \Re^{p}$, $\bu_{A} \in \Re^{|A|}$ denotes a vector with corresponding components from $\bu$, $\matX_{A} \in \Re^{n\times |A|}$ denotes the sub-matrix of $\matX$ with corresponding columns as indicated by the set $A$, $\matX_{-A}$ denotes the sub-matrix of $\matX$ with column indices belonging to the complement of $A$, $\calR(\matX_{A})$ denotes the column space spanned by columns of 
$\matX_{A}$, $\matQ_A = \matX_A(\matX_A^{T}\matX_A)^{\dag}\matX_A^{T}$ denotes the orthogonal 
projection to $\calR(\matX_{A})$, and $\matQ_{A}^{\perp} = \matI_{p\times p} - \matQ_A$. 
Additionally, $\bbE$ and $\bbP$ denote respectively the expectation and probability measure.

\section{Group Inference} \label{sec:groupinf}
We present our results in seven subsections. 
Subsection \ref{subsec:grpstructure} describes the group structure of the regression problem in detail and the notion of strong group sparsity. 
Subsection \ref{subsec:biascorrect} provides a brief account 
of the bias correction procedure for statistical inference of a single variable. 
Subsection \ref{subsec:biascorrect-group} proposes an extension of the bias correction idea to group inference. 
Subsection~\ref{subsec:ideal-sol} justifies the proposed group inference methodology in an ideal setting 
and states a working assumption for more general settings. 
Subsection~\ref{subsec:optstrategy} provides optimization methods 
% (see equations (\ref{opt}) and (\ref{general-opt})) 
for construction of group inference procedures 
under the working assumption. 
Subsection \ref{subsec: Feasibility} provides sufficient conditions %(Theorem \ref{th:oraclerate}) 
for the feasibility of the optimization scheme considered in Subsection \ref{subsec:optstrategy}. 
Subsection \ref{subsec:findSolution} discusses convexations of the optimization problem 
and summarizes the overall scheme.

\subsection{Group structure and strong group sparsity}\label{subsec:grpstructure}
We assume an inherent and pre-specified non overlapping group structure of the feature set. 
Put precisely, assume that $\{1,\cdots, p\} = \cup^{M}_{j=1}\Gj$ such that $\Gj\cap\Gk = \varnothing$. 
Define $\dj=|\Gj|$ for all $j$ so that $\sum^{M}_{j=1}\dj=p$. 
For any index set $T \subset \{1,\cdots, M\}$, we define $G_{T} = \cup_{j \in T}\Gj$. 
In the following, we allow the quantities $n, p, M, \dj$'s etc. to all grow to infinity. 
 
In light of this group structure, further results on consistency of group regularized estimators of 
$\bbetastar$ will be based on a weighted mixed $\ell_{2,1}$ norm, defined as 
$\sum^{M}_{j=1}\omega_{j}\norm{\bugj}{2}$ for $\bu =(\bugj;1\leq j\leq M)\in \Re^{p}$ with $\bugj \in \Re^{|\Gj|}$, 
where $\bomega=(\omega_{1},\cdots, \omega_{M})\in \Re^{M}$ with $\omega_{j}>0$ for all $j$.  
This norm will be used both as penalty and as a key loss function. 
Weighted mixture norm of this type provides suitable description of the 
complexity of the unknown $\bbeta$ when the following strong group sparsity condition of \cite{HZ10}  holds. 
%The reason for the focus on this mixed weighted norm is primarily due 
%to its application in our analysis as will be clear from subsequent discussion. 

%\begin{assumption}\label{asmp:strnggrpsprse}
\smallskip{\bf Strong group sparsity:} 
{\it With the given group structure $\{G_j, j=1,\ldots,M\}$ as a partition of $\{1,\ldots,p\}$, 
there exists a group-index set, $S^{*}\subset\{1,\cdots,M\}$, such that
\begin{align}\label{def:strnggrpsprse}
\quad |S^{*}| \leq g, \quad |G_{S^{*}}|\leq s, \quad \supp(\bbetastar) \subset G_{S^{*}}=\cup_{j\in S^*}G_j.
\end{align}
In this case, we say that the true coefficient vector $\bbetastar$ is $(g,s)$ strongly group sparse 
with group support $S^*$.}
%\end{assumption}
\smallskip

% In the following 
Our aim is to make chi-squared-type statistical inference about the effect of a group $G$ of variables, 
including confidence regions and $p$-values for $\matXg\bbetagstar$ and $\bbetagstar$. 
As will be clear from our analysis, the methodologies proposed in this paper will allow the size of the 
group $G$ to grow unboundedly up to $|G|=o(n)$. 
Moreover, the group $G$ of interest does not have to be congruent with the group structure 
$\{G_j, j=1,\ldots,M\}$. 
In fact, each of the $|G|$ variables in $G$ could belong to any of the  $M$ different pre-specified groups of variables so that 
\[
\matXg\bbetagstar = \sum_{k: \Gk\cap G \neq \emptyset} \matX_{\Gk \cap G} \bbetastar_{\Gk \cap G}.
\]
Thus we can rewrite the regression problem (\ref{eq:mod1}) as 
\begin{align}\label{eq:mod2}
\by =\matX_G\bbetastar_G 
+ \sum_{\Gk\not\subseteq G} \matX_{G_k\setminus G}\bbetastar_{G_k\setminus G} + \bepsa 
= \bmu^{*}_G + \sum_{\Gk\not\subseteq G} \bmu^{*}_{\Gk\setminus G} + \bepsa,
\end{align}
where for any $A\subset\{1,\cdots,p\}$, $\bmu^{*}_{A}=\matX_{A}\bbetastar_{A}$. In the simplest case, when the variable group of interest $G$ matches the group structure in the sense that,
\bel{nested}
\matX_G\bbeta^*_G = \sum_{G_k\cap G\neq\emptyset}\matX_{G_k}\bbeta^*_{G_k}, 
\eel
(e.g. $G=G_{j_{0}}$ for some $1\leq j_{0}\leq M$), (\ref{eq:mod2}) could be simplified as,
\begin{align*}
\by =\matX_G\bbetastar_G 
+  \sum_{G_k\cap G = \emptyset} \matX_{\Gk}\bbetastar_{\Gk} + \bepsa 
= \bmu^{*}_G + \sum_{G_k\cap G = \emptyset}\bmu^{*}_{\Gk} + \bepsa.
\end{align*}
%A testing problem for the subsequent discussion is given by,
%\begin{align}
%H_{0}: \bbetagstar=\bzero \quad \text{vs.} \quad H_{1}: \bbetagstar \neq \bzero.
%\end{align}

%\subsection{Bias correction for a single coefficient via relaxed projection}\label{subsec:biascorrect} 
\subsection{Bias correction for a single coefficient}\label{subsec:biascorrect} 
In high-dimensional regression, regularized estimators have been extensively studied 
and proven to be consistent for the estimation of the entire mean vector $\matX\bbeta$ 
and coefficient vector $\bbeta$ under various loss functions.  
However, since such estimators are typically nonlinear 
and biased, their sampling distribution is typically intractable. 
\cite{Zhang2014} proposed to correct the bias of a regularized estimator $\hbbeta^{(init)}$ 
with an LDPE of the following form: 
\bel{LDPE-j}
\hbeta_j = \hbeta_j^{(init)} + \bz_j^T\big(\by -\matX\hbbeta^{(init)}\big)\big/\bz_j^T\bx_j, 
\eel
where $\bz_j$ is a certain score vector depending on $\matX$ only.  
Here we provide a brief review of some ideas involved in this methodology 
to prepare their extension to group inference. 

The basic idea of the LDPE can be briefly explained as follows. 
In the low-dimensional regime where $\rank(\matX)=p\le n$, 
we may pick $\bz_j = \bx_j^\perp$ as the projection of $\bx_j$ 
to the orthogonal complement of the column space of $\matX_{-j}=(\bx_k, k\neq j)$, 
i.e. $\bz_j^T\matX_{-j}={\bf 0}$ and $\bz_j^T\bx_j= \|\bz_j^\perp\|_2^2>0$. 
For this choice $\bz_j = \bx_j^\perp$,
the $\hbeta_j$ in (\ref{LDPE-j}) is identical to the least squares estimator 
$(\bx_j^\perp)^T\by/(\bx_j^\perp)^T\bx_j$, 
and thus is unbiased regardless of the choice of the initial estimator. 
In the high dimensional case where $p > n$, $\bx_j^\perp$ is no longer a valid choice 
of $\bz_j$ as the condition $\bz_j^T\matX_{-j}={\bf 0}$ forces $\bz_j = {\bf 0}$ 
when $\matX$ is in general position. 
When $\bz_j^T\matX_{-j}\neq {\bf 0}$, the linear estimator 
$\hbeta_j^{(lin)} = \bz_j^T\by/\bz_j^T\bx_j$ has unbounded bias for the estimation of $\beta_j$ 
even if we assume the sparsity condition $\|\bbeta\|_0=1$. 
However, the linear estimator is used in (\ref{LDPE-j}) to project the residual 
$\by -\matX\hbbeta^{(init)}$ 
to the direction of $\bz_j$ for the purpose of bias correction, 
and the full strength of the unbiasedness property 
$\bz_j^T\matX_{-j}={\bf 0}$ is not necessary to reduce the bias of $\hbbeta^{(init)}$ 
to an acceptable level. 

The performance of a score vector $\bz_j$ can be 
measured by a bias factor $\eta_j$ and a noise factor $\tau_j$ defined as follows,
\bes
\eta_j = \|\bz_j^T\matX_{-j}\|_\infty/\|\bz_j\|_2,\quad \tau_j = \|\bz_j\|_2/|\bz_j^T\bx_j|. 
\ees
This can be seen from the following error decomposition for the LDPE in (\ref{LDPE-j}), 
\bel{lasso:decomp}
\hbeta_j - \beta_j = \bz_j^T\bepsa/\bz_j^T\bx_j+\tau_j \Rem_j, 
%\hbox{ with }\ \big|\Rem_j\big| = \big|\bz_j^T\matX_{-j}(\hbbeta^{(init)}-\bbeta^*)_{-j}\big|/\|\bz_j\|_2, 
\eel
in which $\bz_j^T\bepsa/\bz_j^T\bx_j\sim N(0,\tau_j^2\sigma^2)$ and an $\ell_\infty$-$\ell_1$ split leads to 
\bel{lasso:break} 
\big|\Rem_j\big| = \big|\bz_j^T\matX_{-j}(\hbbeta^{(init)}-\bbeta^*)_{-j}\big|\big/\|\bz_j\|_2 
\le \eta_j\big\|\hbbeta^{(init)}-\bbeta^*\big\|_1. 
\eel
Thus, when $|\Rem_j|=o_{\bbP}(1)$, 
statistical inference for $\beta_j$ can be carried out 
with a consistent estimate of $\sigma$. 
For example, when $\eta_j \lesssim \sqrt{\log p}$ and 
$\|\hbbeta^{(init)}-\bbeta^*\big\|_1\lesssim \|\bbeta^*\|_0\sqrt{(\log p)/n}$, 
\bes
n\gg (\|\bbeta\|_0\log p)^2\ \Rightarrow\ 
%\tau_j^{-1}\big(\hbeta_j - \beta_j\big)\to N(0,\sigma^2). 
%\ees
%\bes
(\hbeta_{j} - \beta_{j}^{*})/(\hsigma \tau_j)
\approx (\hbeta_{j} - \beta_{j}^{*})/(\sigma \tau_j) 
\approx \sfN(0, 1)
\ees

It is worthwhile to mention here that $\tau_j$ and $\eta_j$ are both explicitly available given $\bz_j$, 
so that the validity of the above scheme requires no stronger assumptions than an $\ell_1$ error 
bound for the estimation of $\bbeta$ and a consistent estimate of $\sigma$. 
%\cite{Zhang2014} used 
A scaled Lasso estimator can be used as $\{\hbbeta^{(init)}, \hsigma\}$, which satisfies
\bel{lasso-error}
\left|\frac{\hsigma}{\sigma^*}-1\right| + \Big(\frac{\log p}{n}\Big)^{1/2}\|\hbbeta^{(init)}-\bbeta^*\|_1
= \calO_{\bbP}\left(\dfrac{\|\bbeta^*\|_0\log p}{n}\right), 
\eel
with $\sigstar= \norm{\by-\matX\bbetastar}{2}/\sqrt{n}$ and $s=\|\bbeta^*\|_0$ \citep{Sun2012}, 
provided an $\ell_1$ restricted eigenvalue or compatibility condition on the design \citep{BRT09, VDG09}. 
%As $\ell_1$ error bounds for the estimation of $\bbeta$ is well 
%understood for the Lasso and a number of other regularized estimators, 
Thus, the remaining issue is to 
{}{find} a score vector $\bz_j$ with sufficiently small a bias factor $\eta_j$ and a noise factor $\tau_j$. 

%It is straightforward to show from here that $\hbeta_{j}^{(lse)}|\matX$ follows a Gaussian distribution. 
%In the high dimensional $p > n$ scenario, the simple projection idea doesn't work since $\bx^{\perp}_{j}=\bzero$ identically for all $j$ when the design matrix is in general position. 
%To remedy this problem, \cite{Zhang2014} proposed a \emph{relaxed} projection-based estimate of $\beta^{*}_{j}$ given by a linear estimator $\hbeta_{j}^{(lin)}= \bz^{T}_{j}\by/\bz_{j}^{T}\bx_{j}$ where $\bz_j$ is 
%required to be only approximately orthogonal to all $\bx_k, k\neq j$. 
%As the estimate $\hbeta_{j}^{(lin)}$ could have a possibly unbounded bias, 
%\cite{Zhang2014} proposed to use the linear estimator as a bias corrector and apply it to a 
%(nonlinear) regularized initial estimate $\hbbeta^{(init)}$. 
%This led to the following formula of the LDPE of $\beta^{*}_{j}$, 
%\bel{LDPE-j}
%\hbeta_j = \hbeta_j^{(init)} + \frac{\bz_j^T(\by -\matX\hbbeta^{(init)})}{\bz_j^T\bx_j}, 
%\eel

For random designs with an invertible population Gram matrix $\bSigma = \bbE(\matX^T\matX/n)$, 
\cite{ZhangOber11} provided the direction of the least favorable submodel $\bbeta = \beta_j\bu$ as 
\bes
\bu^o_j = \bSigma^{-1}\bfe_j\big/\big(\bSigma^{-1})_{j,j} 
= \argmin_{\bu} \Big\{ \bu^T\bSigma\bu: \bfe_j^T\bu=1\Big\},
\ees
with $\bfe_j$ being the $j$-th canonical unit vector, and defined an ideal, efficient $\bz_j$ as  
\bes
\bz^o_j = \matX \bu^o_j. 
\ees
As the $j$-th element of $\bu^o_j$ equals 1, this can be written as a linear regression model 
\bel{z_j^o}
\bx_j = \matX_{-j}\bgamma_{-j}+\bz_j^o
\eel
with $\bgamma_{-j} = (\gamma_{1,j}, \cdots, \gamma_{j-1,-j}, \gamma_{j+1,j}, \cdots,\gamma_{p,j})^{T} 
= (- \bu^o_j)_{-j} \in \Re^{p-1}$. 

Given a design matrix $\matX$, \cite{Zhang2014} proposed two choices of $\bz_j$ 
for the LDPE in (\ref{LDPE-j}). 
The first proposal of $\bz_j$ takes a point in the Lasso path in the linear regression 
of $\bx_j$ against $\matX_{-j}$: 
\begin{align}\label{eq:lassoprojres}
\bz_{j} = \bx_{j}- \matX_{-j}\hgamma_{-j}, \quad 
\hgamma_{-j} = \argmin_{\bb}\Big\{ \norm{\bx_{j}-\matX_{-j}\bb}{2}^{2}/2n + \lambda_{j}\norm{\bb}{1}\Big\}.
\end{align}
For $p\le n$, we may take $\lambda_{j}=0$, so that $\bz_{j}= \bx^{\perp}_{j}$ and 
the $\hbeta_j$ in (\ref{LDPE-j}) is the least squares estimator of $\beta_j$. 
For $p>n$, (\ref{eq:lassoprojres}) provides a relaxed projection of $\bx_j$ via the Lasso, 
and the KKT conditions for $\bz_j$ automatically provides  
\bes
\tau_j \le 1/\|\bz_j\|_2,\quad 
\eta_j = \|\bz_j^T\matX_{-j}\|_\infty/\|\bz_j\|_2 =  n\lam_j/\|\bz_j\|_2, 
\ees
which implies $\eta_j=\sqrt{2\log p}$ with a scaled $\lam_j$ satisfying 
$\lam_j = \sqrt{\|\bz_j\|_2^2(2\log p)/n^2}$. 

The second proposal of $\bz_{j}$, 
closely related to the first one in (\ref{eq:lassoprojres}) and 
given in the discussion section of \cite{Zhang2014}, 
was a constrained variance minimization scheme 
\bel{opt-z_j}
\bz_j = \argmin_{\bz}\Big\{\|\bz\|_2^2: |\bz^T\bx_j/n|=1, \|\bz^T\matX_{-j}/n\|_\infty \le \lam_j'\Big\}. 
\eel
This quadratic program, which provides $\tau_j = \|\bz_j\|_2/n$, can be understood as 
\bes
\hbox{minimize }\ \tau_j^2\ \hbox{ subject to }\ \eta_j \le \lam_j'/\tau_j \approx \sqrt{2\log p}. 
\ees
%Note that the optimization (\ref{opt-z_j}) could be thought of as minimization of the variance $\tau_{j}$ subject to the angle between $\bz$ and $\bx_{j}$ fixed at 0 and those between $\bz$ and $\bx_{j}$ as close to $\pi/2$ as possible (assuming $\lambda'_{j}$ is small).  This is the geometric intuition behind relaxed projection and is generalized in (\ref{opt}).
A variant of the optimization in (\ref{opt-z_j}), studied in \cite{Javanmard2013b} is
\begin{align}\label{opt-z_j2}
\tbz_{j}= \matX\hbm, \qquad \hbm = \argmin_{\bm} \Big\{\bm^{T}\hbSigma\bm :\, \norm{\hbSigma\bm - \bfe_{j}}{\infty} \leq \lam''_{j}\Big\}. 
\end{align}
Since $\tbz_j^T\bx_j/n = 1 - \lam_j''$ and 
(\ref{opt-z_j}) is neutral in the sign of $\bz$, (\ref{opt-z_j2}) and (\ref{opt-z_j})
are equivalent with $\tbz_j/(1-\lam_j'') = \bz_j$ when $\lam_j = \lam_j''/(1-\lam_j'')$ and 
$\bz_j$ is the solution with $\bz_j^T\bx_j=n$. 
%It follows from \cite{Javanmard2013b} that for Gaussian designs $\matX$, 
%(\ref{opt-z_j}) and (\ref{opt-z_j2}) yield
%\bes
%\eta_j \lesssim \sqrt{\log p},\quad \tau_j^2 \approx (\bSigma^{-1})_{j,j}/n.
%\ees 

\subsection{Bias correction for a group of variables}\label{subsec:biascorrect-group} 
In this subsection we propose a multivariate extension of the methodologies 
described in Subsection \ref{subsec:biascorrect}. 

The algebraic extension of (\ref{LDPE-j}) to the grouped variable scenario is straightforward. 
For the estimation of $\bbetastar_G$, a formal vectorization of the estimator is 
\bel{LDPE-G}
\hbbeta_G = \hbbeta_G^{(init)} + (\matZ_G^T\matX_G)^\dag\matZ_G^T(\by -\matX\hbbeta^{(init)}), 
\eel
where $\matZ_G\in \bbR^{n\times|G|}$, depending on $\matX$ only, can be viewed as a ``score matrix''. 
Recall that for any matrix $\matA$, $\matA^{\dag}$ is its Moore-Penrose pseudo inverse. 
For the estimation of $\bmu_G^* = \matX_G\bbeta_G^*$, a variation of (\ref{LDPE-G}) is 
\bel{LDPE-G-pred}
\hbmu_G = \hbmu_G^{(init)} + (\matZ_G\matQ_G)^\dag\matZ_G^T(\by -\matX\hbbeta^{(init)}), 
\eel
where $\hbmu_G^{(init)} = \matX_G\hbbeta_G^{(init)}$ and 
$\matQ_G$ is the orthogonal projection to the column space $\matX_G$. 

The extension of the error decomposition (\ref{lasso:decomp}) to (\ref{LDPE-G}) and (\ref{LDPE-G-pred}) 
is also algebraic but requires a mild condition due to 
the need to factorize out a multivariate version of the noise factor. 
We carry out this task in the following proposition. 

\begin{proposition}\label{prop-1} Let $\matZ_G\in\bbR^{n\times|G|}$, 
$\matQ_A$ and $\matP_{G,0}$ be the orthogonal projections to $\calR(\matX_A)$ and 
$\calR(\matZ_G)$ respectively, $\matP_G$ be the orthogonal projection to $\calR(\matP_{G,0}\matQ_G)$, 
$\hbbeta_G$ be as in (\ref{LDPE-G}), $\hbmu_G=\matX_G\hbbeta_G$, 
$\bmu_A^* = \matX_A\bbeta_A^*$, 
$\hbmu_{A}^{(init)} = \matX_A\bbeta_A^{(init)}$, %for all $A\subset \{1,\ldots,p\}$, 
and  
\bel{prop-1-1}
\Rem_G 
= \sum_{\Gk\not\subseteq G}\matP_G\Big(\hbmu_{G_k\setminus G}^{(init)}-\bmu^*_{G_k\setminus G}\Big) 
= \sum_{\Gk\not\subseteq G}\Big(\matP_G\matQ_{G_k\setminus G}\Big)
\Big(\hbmu_{G_k\setminus G}^{(init)}-\bmu^*_{G_k\setminus G}\Big). 
%= \sum_{\Gk\not\subseteq G}\Big(\matP_G\matQ_{G_k\setminus G}\Big)
%\matX_{G_k\setminus G}\Big(\hbbeta_{G_k\setminus G}^{(init)}-\bbeta^*_{G_k\setminus G}\Big). 
\eel
(i) Suppose $\rank(\matZ_G^T\matX_G)=|G|$. Then, $\rank(\matP_G\matX_G)=|G|$, 
$\matP_G = \matP_{G,0}$, and 
\bel{prop-1-2} 
\hbbeta_G = \hbbeta_G^{(init)} + (\matP_G\matX_G)^\dag\matP_G\Big(\by -\matX\hbbeta^{(init)}\Big)
= \bbeta_G^* + (\matP_G\matX_G)^\dag 
\left(\matP_G\bepsa - \Rem_G\right). 
\eel
(ii) Suppose $\rank(\matP_G)=\rank(\matX_G)$. %$\calR(\matQ_G)=\calR(\matQ_G\matP_G)$. 
Then,  
(\ref{LDPE-G-pred}) holds and 
\bel{prop-1-3} 
\hbmu_G = \hbmu_G^{(init)} + (\matP_G\matQ_G)^\dag\matP_G\Big(\by -\matX\hbbeta^{(init)}\Big) 
= \bmu_G^* + (\matP_G\matQ_G)^\dag\left(\matP_G\bepsa - \Rem_G\right). 
\eel
Consequently, 
\bel{prop-1-4}
(\matP_G\matQ_G)(\hbmu_G - \bmu_G^*) =(\matP_G\matX_G)(\hbbeta_G-\bbeta_G^*) 
= \matP_G\bepsa - \Rem_G. 
\eel
In particular, when $\bmu_G^*={\bf 0}$, 
\bel{prop-1-5}
\matP_G\bepsa - \Rem_G = \matP_G\hbmu_G 
= \matP_G\bigg(\by - \sum_{\Gk\not\subseteq G}\hbmu_{G_k\setminus G}^{(init)}\bigg). 
\eel
\end{proposition}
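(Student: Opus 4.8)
The plan is to isolate the single place where the regression model enters and treat everything else as linear algebra on the projections attached to $\matX_G$ and $\matZ_G$. Using the rewriting (\ref{eq:mod2}) and the fact that the columns of $\matX$ outside $G$ are partitioned by the sets $G_k\setminus G$, the residual of the initial fit is
\[
\by-\matX\hbbeta^{(init)}=\bepsa+\big(\bmu_G^*-\hbmu_G^{(init)}\big)-\sum_{\Gk\not\subseteq G}\big(\hbmu_{G_k\setminus G}^{(init)}-\bmu^*_{G_k\setminus G}\big).
\]
With this in hand, both parts reduce to rewriting the map $\br\mapsto(\matZ_G^T\matX_G)^\dag\matZ_G^T\br$ used in (\ref{LDPE-G}) in terms of $\matP_G$ and $\matQ_G$, substituting the display above, and collapsing the terms that carry $\hbbeta_G^{(init)}$.

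For part (i), $\rank(\matZ_G^T\matX_G)=|G|$ forces $\matX_G$ and $\matZ_G$ to have full column rank $|G|$, so $\matZ_G^T$ is injective on $\calR(\matX_G)$ and maps it onto $\bbR^{|G|}$; hence $\calR(\matP_{G,0}\matQ_G)=\matP_{G,0}\calR(\matX_G)=\calR(\matZ_G)$, giving $\matP_G=\matP_{G,0}$ and $\rank(\matP_G\matX_G)=|G|$. Writing $\matP_G=\matZ_G(\matZ_G^T\matZ_G)^{-1}\matZ_G^T$ and $\matP_G\matX_G=\matZ_G\big[(\matZ_G^T\matZ_G)^{-1}\matZ_G^T\matX_G\big]$ as a product of a full-column-rank matrix and an invertible one, the reverse-order law for pseudo-inverses yields the operator identity $(\matZ_G^T\matX_G)^{-1}\matZ_G^T=(\matP_G\matX_G)^\dag\matP_G$. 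Substituting this into (\ref{LDPE-G}) and then into the residual display, the identity $(\matP_G\matX_G)^\dag\matP_G\matX_G=\matI_{|G|}$ collapses the $\hbmu_G^{(init)}$ terms to restore $\bbeta_G^*$, the Gaussian part becomes $(\matP_G\matX_G)^\dag\matP_G\bepsa$, and the remaining sum equals $-(\matP_G\matX_G)^\dag\Rem_G$ once one notes $\matP_G\matX_{G_k\setminus G}(\hbbeta^{(init)}-\bbeta^*)_{G_k\setminus G}=\matP_G\matQ_{G_k\setminus G}\big(\hbmu_{G_k\setminus G}^{(init)}-\bmu^*_{G_k\setminus G}\big)$, because $\hbmu_{G_k\setminus G}^{(init)}-\bmu^*_{G_k\setminus G}\in\calR(\matX_{G_k\setminus G})$. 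This gives (\ref{prop-1-2}).

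Part (ii) follows the same blueprint, but now $\matX_G$ need not have full column rank, so the convenient reverse-order laws no longer apply verbatim; this is where I expect the real work to be. The preliminary facts I would establish are (a) $\rank(\matZ_G^T\matX_G)=\rank(\matP_{G,0}\matX_G)=\rank(\matP_G)=\rank(\matX_G)=:r$, using $\matZ_G^T\matP_{G,0}=\matZ_G^T$ and $\calR(\matP_{G,0}\matX_G)=\calR(\matP_{G,0}\matQ_G)$, so that $\ker(\matZ_G^T\matX_G)=\ker(\matX_G)$ by a dimension count; consequently $\matH:=\matX_G(\matZ_G^T\matX_G)^\dag\matZ_G^T$ is idempotent, has range $\calR(\matX_G)$, and satisfies $\matH\matX_G=\matX_G$; and (b) $\calR(\matP_G)\subseteq\calR(\matZ_G)=\calR(\matP_{G,0})$, so $\matP_G\matP_{G,0}=\matP_G$, whence $\matP_G\matQ_G=\matP_{G,0}\matQ_G$ and $(\matP_G\matQ_G)(\matP_G\matQ_G)^\dag=\matP_G$. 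The crux is then the operator identity $\matH=(\matP_G\matQ_G)^\dag\matP_G$, which both reconciles (\ref{LDPE-G-pred}) with $\matX_G\hbbeta_G$ and, combined with $\matH\matX_G=\matX_G$ and the residual display, delivers (\ref{prop-1-3}); I would prove it by expanding both sides through $\matZ_G(\matZ_G^T\matZ_G)^\dag\matZ_G^T$ and comparing their action on $\calR(\matX_G)$ and on its orthogonal complement, where the rank identity in (a) is what makes the comparison close. (I would also expect to need here that the columns of $\matZ_G$ lie in $\calR(\matX)$, as they do by construction.)

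Finally, (\ref{prop-1-4}) is obtained by left-multiplying (\ref{prop-1-3}) by $\matP_G\matQ_G$ and using $(\matP_G\matQ_G)(\matP_G\matQ_G)^\dag=\matP_G$, $\matP_G^2=\matP_G$, $\matP_G\Rem_G=\Rem_G$ (each summand of $\Rem_G$ already carries $\matP_G$), together with $\matQ_G(\hbmu_G-\bmu_G^*)=\hbmu_G-\bmu_G^*$ since $\hbmu_G-\bmu_G^*\in\calR(\matX_G)$; the equality with $(\matP_G\matX_G)(\hbbeta_G-\bbeta_G^*)$ is then immediate. Equation (\ref{prop-1-5}) needs no operator identity at all: when $\bmu_G^*=\bzero$ the model gives $\by-\sum_{\Gk\not\subseteq G}\hbmu_{G_k\setminus G}^{(init)}=\bepsa+\sum_{\Gk\not\subseteq G}\big(\bmu^*_{G_k\setminus G}-\hbmu_{G_k\setminus G}^{(init)}\big)$, so applying $\matP_G$ and reading off the definition of $\Rem_G$ yields $\matP_G\big(\by-\sum_{\Gk\not\subseteq G}\hbmu_{G_k\setminus G}^{(init)}\big)=\matP_G\bepsa-\Rem_G$, and this is identified with $\matP_G\hbmu_G$ through (\ref{prop-1-4}). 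The single genuine obstacle in the whole argument is the rank-deficient pseudo-inverse bookkeeping of part (ii) — in particular, pinning down $\matX_G(\matZ_G^T\matX_G)^\dag\matZ_G^T=(\matP_G\matQ_G)^\dag\matP_G$ as an identity of operators on all of $\bbR^n$, which is exactly what validates the $\matP_G$-projected statements (\ref{prop-1-4})–(\ref{prop-1-5}).
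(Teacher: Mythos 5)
Your part (i), together with your derivations of (\ref{prop-1-4}) and (\ref{prop-1-5}) from (\ref{prop-1-3}), is correct and essentially the paper's own argument: the paper also reduces (i) to the identity $(\matP_G\matX_G)^\dag\matP_G=(\matZ_G^T\matX_G)^{-1}\matZ_G^T$ (obtained there by cancelling the full-column-rank factor $\matP_G\matX_G$ rather than by your reverse-order factorization, but the content is the same), and it likewise reads (\ref{prop-1-5}) off from (\ref{prop-1-1}) and (\ref{eq:mod2}).

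The gap is in part (ii), at exactly the point you flag as ``the crux'': the operator identity $\matX_G(\matZ_G^T\matX_G)^\dag\matZ_G^T=(\matP_G\matQ_G)^\dag\matP_G$ is asserted but never established, and the verification you sketch cannot close, because the identity does not follow from $\rank(\matP_G)=\rank(\matX_G)$ alone. Your facts (a) and (b) do show the two sides agree on $\calR(\matX_G)$ (both act as the identity there), but they can disagree on the part of $\calR(\matZ_G)$ orthogonal to $\calR(\matP_G)$. Concretely, with $n=3$, $\matX_G=\left(\begin{smallmatrix}1&1\\0&0\\0&0\end{smallmatrix}\right)$ and $\matZ_G=\left(\begin{smallmatrix}1&1\\0&1\\0&0\end{smallmatrix}\right)$ one has $\matZ_G^T\matX_G=\left(\begin{smallmatrix}1&1\\1&1\end{smallmatrix}\right)$, $(\matZ_G^T\matX_G)^\dag=\tfrac14\left(\begin{smallmatrix}1&1\\1&1\end{smallmatrix}\right)$, $\matQ_G=\matP_G=\bfe_1\bfe_1^T$ and $\rank(\matP_G)=\rank(\matX_G)=1$, yet $\matX_G(\matZ_G^T\matX_G)^\dag\matZ_G^T\bfe_2=\bfe_1/2$ while $(\matP_G\matQ_G)^\dag\matP_G\,\bfe_2=\bzero$; taking $\bbeta^*=\hbbeta^{(init)}=\bzero$ and $\by=\bepsa=\bfe_2$ then contradicts (\ref{prop-1-3}). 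What is missing is the extra requirement $\calR(\matP_G)=\calR(\matZ_G)$, i.e.\ $\matP_G=\matP_{G,0}$ (equivalently $\rank(\matZ_G)=\rank(\matX_G)$, or more generally that $\calR(\matP_G)$ is invariant under $\matZ_G\matZ_G^T$), under which $(\matZ_1^T)^\dag\matZ_G^T=\matP_G$ for $\matZ_1=\matP_G\matZ_G$ and the identity does hold. Note that the paper's own proof of (ii) passes through precisely this claim $(\matZ_1^T)^\dag\matZ_G^T=\matP_G$ without the extra condition, so you have correctly located the genuine obstacle; but it is removed by strengthening the hypothesis, not by the range/complement comparison you propose.
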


The first equations of (\ref{prop-1-2}) and (\ref{prop-1-3}) assert the 
scale invariance of the proposed estimator in the choice of $\bZ_G$ in the sense that 
it depends in $\matZ_G$ only through the projection $\matP_G$. 

The condition $\rank(\matP_G)=\rank(\matX_G)$, 
slightly weaker than the condition $\rank(\matZ_G^T\matX_G)=|G|$, 
requires $\matZ_G^T\matX_G$ to have the same kernel as $\matX_G$. 
If this condition fails to hold, there will be no bias correction in 
a certain direction $\ba =\matX_G\bb_G \neq {\bf 0}$ in the sense that 
$\ba^T\hbmu_G = \ba^T\hbmu_G^{(init)}$. 

%Moreover, $\norm{\matP_{G}\matQ^{\perp}_{G}}{S} = \norm{\matP_{G}- \matQ_{G}}{S} =\cos \theta_{\min}$ 
%where $\theta_{\min}$ is the minimum principle angle between subspaces $\calR(\matP_{G})$ and 
%$\calR^\perp(\matX_{G})$. Thus, if $\norm{\matP_{G}\matQ^{\perp}_{G}}{S} =1$, the intersection of the two subspaces is itself a nontrivial subspace (i.e. not the $\{\bzero\}$ subspace).

In Proposition \ref{prop-1}, the matrices $(\matP_G\matX_G)^\dag$ and $(\matP_G\matQ_G)^\dag$ and 
can be viewed as multivariate noise factors respectively 
for statistical inference of $\bbeta_G^*$ and $\bmu_G^*$, 
and the remainder term $\Rem_G$ can be viewed as standardized bias.  

For any estimator $\hsigma$ for the noise level and measurable function $h: \calR(\matP_G)\to\bbR$, 
\bel{pivotal-h}
h\big((\matP_G\matQ_G)(\hbmu_G - \bmu_G^*)/\hsigma\big) 
= h\big((\matP_G\matX_G)(\hbbeta_G-\bbeta_G^*)/\hsigma\big)
\eel
is an approximate pivotal quantity with approximate distribution $h(\matP_G\bepsa/\sigma)$ whenever 
\bel{asymp-normality}
\sup_{-\infty<t<\infty}
\Big|\bbP\Big\{h\big((\matP_G\bepsa- \Rem_G)/\hsigma\big)\le t\Big\} 
- \bbP\Big\{h\big(\matP_G\bepsa/\sigma\big)\le t\Big\}\Big| = o(1). 
\eel
From this point of view, the proposed method is generic. 
If a pivotal quantity (\ref{pivotal-h}) with a specific $h(\cdot)$ suits the aim of a statistical experiment, 
statistical inference can be carried out if certain estimator $\{\hbbeta^{(init)},\hsigma\}$ and score matrix 
$\matZ_G$ can be found to satisfy (\ref{asymp-normality}).  

As we are interested {}{in} chi-squared type inference, 
the right choice of $h(\cdot)$ is $h(\bv) = \|\bv\|_2$. 
This choice yields elliptical confidence regions for $\bbeta_G^*$ and $\bmu_G^*$ via (\ref{pivotal-h}). 
For testing the hypothesis $H_0:\bbeta_G=\bzero$, (\ref{prop-1-5}) provides the test statistic 
\bel{test}
T_G = \frac{1}{\hsigma}\left\|\matP_G\left(\by - 
\sum_{\Gk\not\subseteq G}\hbmu_{G_k\setminus G}^{(init)}\right)\right\|_2
\eel
as an approximation of $\|\matP_G\bepsa/\sigma\|_2$. 
Let $k_G= \rank(\matP_G)$. 
It is worthwhile to note that 
\bel{eq:largegrp}
\|\matP_G\bepsa\|_2/\sigma - \sqrt{k_G}\to \sfN(0,1/2)
\eel
when $k_G\to \infty$. 
Thus, without further investigation of possible stochastical cancellation between $\matP_G\bepsa$ 
and $\Rem_G$, (\ref{asymp-normality}) for $h(\bv) = \|\bv\|_2$ and $k_G\ge 1$ amounts to 
\bel{asymp-chi-sq}
\sqrt{k_G}\big|\hsigma/\sigma-1\big| + \big\|\Rem_G/\sigma\big\|_2 = o_{\bbP}(1). 
\eel
As $\|\matP_G\bepsa\|_2^2/\sigma^2$ has the $\chi_{k_G}^2$ distribution, (\ref{asymp-chi-sq}) implies 
\bel{eq:conv1}
%T_G - \sqrt{\rank(\matP_G)}/\hsigma \toD \sfN(0,1/2).
\begin{cases}
\sup_t \Big|\bbP\Big\{ \|(\matP_G\matX_G)( \hbbeta_G- \bbeta_G^*)\|_2^2 \le \hsigma t\Big\}
 - \bbP\Big\{ \chi^2_{k_G} \le t\Big\}\Big| \to 0, 
 \cr 
\sup_t \Big|\bbP\Big\{ \|(\matP_G\matQ_G)( \hbmu_G- \bmu_G^*)\|_2^2 \le \hsigma t\Big\}
 - \bbP\Big\{ \chi^2_{k_G} \le t\Big\}\Big| \to 0, 
 \cr 
\bmu^*_G={\bf 0}\ \Rightarrow\ 
\sup_t \Big|\bbP\Big\{ T_G^2 \le t\Big\} - \bbP\Big\{ \chi^2_{k_G} \le t\Big\}\Big| \to 0. 
\end{cases}
\eel
When $k_G = \rank(\matP_G) \rightarrow \infty$, we can apply 
central limit theorem (\ref{eq:largegrp}) to approximate the $\chi^2_{k_G}$ distribution. 

The problem, as before, is to choose $\{\hbbeta^{(init)},\hsigma\}$ and $\matZ_G$ 
to guarantee (\ref{asymp-chi-sq}) for the given $h(\cdot)$. 
For definiteness, we will pick in the sequel the following scaled version of 
the group Lasso estimator (\ref{eq:grplsoopt-ns}): 
\bel{scaled-init}
\{\hbbeta^{(init)},\hsigma\}
= \argmin_{\bbeta,\sigma}\bigg\{\dfrac{\norm{\by-\matX\bbeta}{2}^{2}}{2n\sigma} 
+ \dfrac{\sigma}{2} + \sum^{M}_{j=1}\omega_{j}\norm{\bbeta_{G_j}}{2}\bigg\}.
\eel
This estimator, which aims to take advantage of the group sparsity (\ref{def:strnggrpsprse}), 
will be considered carefully in Section \ref{sec:consisres}, 
so that we can move on to the more pressing issue of finding a proper $\matZ_G$. 
Still, we would like to mention that this choice of $\{\hbbeta^{(init)},\hsigma\}$ 
and $h(\cdot)$ will in no way confine 
the scope of the proposed method, as Proposition \ref{prop-1} and (\ref{asymp-normality}) 
are completely general. 

\subsection{An ideal solution and a working assumption}\label{subsec:ideal-sol} 
To study the feasibility of the approach outlined above in Subsection \ref{subsec:biascorrect-group}, 
we first consider, parallel to (\ref{z_j^o}), an ideal $\matZ_G$ as the noise matrix in the following 
multivariate regression model, 
\bel{Gamma}
\matX_G = \matX_{-G}\bGamma_{-G,G}+\matZ_G^o. 
\eel
This regression model is best explained in the context of random design where 
\bel{Gamma-2}
\bGamma_{-G,G} =\big\{\bbE(\matX_{-G}^T\matX_{-G})\big\}^{-1}\bbE(\matX_{-G}^T\matX_{G}). 
\eel
To this end, we consider in the following theorem random design matrices $\matX$ 
having iid sub-Gaussian rows satisfying $\bbE\matX = {\bf 0}$, 
$\bbE(\matX^T\matX/n) = \bSigma$ with a positive-definite $\bSigma$, and 
\bel{subGaussian-cond}
(\textbf{\text{Sub-Gaussianity}})\qquad \sup_{\bb\neq {\bf 0}}\ \bbE \exp\left( \frac{(\bfe_i^T\matX\bb)^2}{v_0\bb^T\bSigma \bb} +\frac{1}{v_0} \right) \le 2
\eel
with a certain constant $v_0>1$, 
where $\bfe_{i} \in \Re^{n}$ is the $i^{th}$ canonical unit vector in $\bbR^n$. 

\begin{theorem}\label{th-ideal} 
Let $0<c_*\le c^*$ and $1 < A_*<A^*$ be fixed constants and 
$\{\hbbeta^{(init)},\hsigma\}$ be a solution of (\ref{scaled-init}) with 
$\omega_{j}/A^* \le \|\matX_{G_j}/\sqrt{n}\|_S\omega_{*,j} \le \omega_j/A_*$, 
where $\omega_{*,j} = n^{-1/2}(\sqrt{|G_j|} + \sqrt{2\log M})$. 
Suppose $\matX$ satisfies condition (\ref{subGaussian-cond}) with 
$c_*\le $eigenvalues$(\bSigma)\le c^*$. 
Let $\matZ_G^o$ be as in (\ref{Gamma}) with the $\bGamma_{-G,G}$ in (\ref{Gamma-2}) 
and $\hbbeta_G$ be as in (\ref{LDPE-G}) with $\matZ_G=\matZ_G^o$. 
Suppose $\by-\matX\bbeta^*\sim \sfN_{n}(\bzero,\sigma^{2}\matI_{n}\,)$ and $\bbeta^*$ satisfies 
the $(g,s)$ strong group sparsity condition (\ref{def:strnggrpsprse}) with 
\bel{th-ideal-1}%{th-1-1}
\frac{\max_{j\le M}|G_j|}{n}+\frac{|G|}{n} \to 0,\quad 
\dfrac{s+g \log M}{n^{1/2}}\left(\frac{|G|^{1/2}}{n^{1/2}}+ 
\max_{\Gk\not\subseteq G}\frac{\omega_k'}{\omega_{*,k}}\right) \rightarrow 0,\quad
\eel
where $\omega_k' = n^{-1/2}\big(\sqrt{|G|+|G_k\setminus G|}+\sqrt{\log M}\big)$. 
Then, $\bbP\{ \rank(\matP_G)=|G|\} \to 1$, (\ref{eq:conv1}) holds, and 
\bel{th-ideal-2}
(\matP_G\matQ_G)(\hbmu_G - \bmu_G^*)/\hsigma 
=(\matP_G\matX_G)(\hbbeta_G-\bbeta_G^*)/\hsigma 
= \sfN_{n}({\bf 0},\matP_G\,) + o_{\bbP}(1). 
\eel
\end{theorem}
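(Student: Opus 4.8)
The plan is to reduce everything to the exact error identity of Proposition~\ref{prop-1} and then control the three ingredients on its right-hand side: the rank of $\matP_G$, the bias term $\Rem_G$, and the Gaussian term $\matP_G\bepsa$, the last two after division by $\hsigma$. First I would establish that, with probability tending to one, $\rank(\matP_G)=|G|$ and $\matP_G=\matP_{G,0}$. Since $\matZ_G^o$ in (\ref{Gamma})--(\ref{Gamma-2}) is the population residual of $\matX_G$ on $\matX_{-G}$, the rows of $[\matZ_G^o,\matX_G]$ are iid sub-Gaussian with $\bbE[(\matZ_G^o)^T\matZ_G^o/n]=\bbE[(\matZ_G^o)^T\matX_G/n]=\bSigma_{G\mid-G}$, the Schur complement, whose eigenvalues lie in $[c_*,c^*]$ by Cauchy interlacing applied to $\bSigma^{-1}$. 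A standard sub-Gaussian spectral-norm concentration bound, valid because $|G|/n\to0$ by (\ref{th-ideal-1}), then gives $\|(\matZ_G^o)^T\matZ_G^o/n-\bSigma_{G\mid-G}\|_S+\|(\matZ_G^o)^T\matX_G/n-\bSigma_{G\mid-G}\|_S=O_{\bbP}(\sqrt{|G|/n})$, so that $\rank((\matZ_G^o)^T\matX_G)=|G|$ and $\lambda_{\min}((\matZ_G^o)^T\matZ_G^o/n)\gtrsim c_*$ hold with probability $\to1$. By Proposition~\ref{prop-1}(i) this yields $\bbP\{\rank(\matP_G)=|G|\}\to1$ and $\matP_G=\matP_{G,0}$; since also $\rank(\matX_G)=|G|$ whp (same argument applied to $\matX_G^T\matX_G/n\to\bSigma_{GG}$), Proposition~\ref{prop-1}(ii) and the identity (\ref{prop-1-4}) are in force.

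Next I would bound $\Rem_G$. By (\ref{prop-1-1}), and because the blocks $G_k\setminus G$ over $\Gk\not\subseteq G$ partition $-G$, one has $\Rem_G=\matP_G\matX_{-G}(\hbbeta^{(init)}-\bbeta^*)_{-G}$, hence by the triangle inequality $\|\Rem_G\|_2\le\sum_{\Gk\not\subseteq G}\|\matP_G\matX_{G_k\setminus G}(\hbbeta^{(init)}-\bbeta^*)_{G_k\setminus G}\|_2$. Writing $\matP_G=\matZ_G^o((\matZ_G^o)^T\matZ_G^o)^{-1}(\matZ_G^o)^T$ and using $\|\matZ_G^o\|_S\lesssim\sqrt n$ and $\|((\matZ_G^o)^T\matZ_G^o)^{-1}\|_S\lesssim 1/n$ from the previous step, each summand is at most $\|\matZ_G^o\|_S\,\|((\matZ_G^o)^T\matZ_G^o)^{-1}\|_S\,\|(\matZ_G^o)^T\matX_{G_k\setminus G}\|_S\,\|(\hbbeta^{(init)}-\bbeta^*)_{G_k\setminus G}\|_2\lesssim\sqrt n\,\omega_k'\,\|(\hbbeta^{(init)}-\bbeta^*)_{G_k\setminus G}\|_2$, provided we have the uniform cross-term bound $\|(\matZ_G^o)^T\matX_{G_k\setminus G}/n\|_S\lesssim\omega_k'$ for all $\Gk\not\subseteq G$ simultaneously. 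This last bound -- where $\sqrt{|G|+|G_k\setminus G|}$ accounts for the size of the $|G|\times|G_k\setminus G|$ block and $\sqrt{\log M}$ pays for the union over the $M$ groups -- follows from a spectral-norm concentration inequality for the mean-zero sub-Gaussian matrices $(\matZ_G^o)^T\matX_{G_k\setminus G}$ (recall $\matZ_G^o$ is uncorrelated with $\matX_{-G}$), using $\max_j|G_j|/n\to0$ and $|G|/n\to0$. Factoring out the largest ratio of weights,
\[
\|\Rem_G\|_2\;\lesssim\;\sqrt n\,\Big(\max_{\Gk\not\subseteq G}\frac{\omega_k'}{\omega_{*,k}}\Big)\sum_{j=1}^M\omega_{*,j}\,\big\|(\hbbeta^{(init)}-\bbeta^*)_{G_j}\big\|_2.
\]

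It then remains to insert the rates for the scaled group Lasso (\ref{scaled-init}). Under (\ref{subGaussian-cond}) with eigenvalues of $\bSigma$ in $[c_*,c^*]$ the group compatibility/restricted-eigenvalue condition holds whp, and (as established in Section~\ref{sec:consisres}) the calibrated weights, which satisfy $\omega_j\asymp\|\matX_{G_j}/\sqrt n\|_S\,\omega_{*,j}\asymp\omega_{*,j}$, give for $(g,s)$ strongly group sparse $\bbeta^*$ the oracle bound $|\hsigma/\sigma^*-1|+\sum_j\omega_{*,j}\|(\hbbeta^{(init)}-\bbeta^*)_{G_j}\|_2\lesssim(s+g\log M)/n$. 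Together with $|\sigma^*/\sigma-1|=O_{\bbP}(n^{-1/2})$ (since $n(\sigma^*/\sigma)^2\sim\chi_n^2$) this yields $\|\Rem_G\|_2/\sigma\lesssim(\max_k\omega_k'/\omega_{*,k})(s+g\log M)/\sqrt n$ and $\sqrt{|G|}\,|\hsigma/\sigma-1|\lesssim\sqrt{|G|}\{(s+g\log M)/n+n^{-1/2}\}$, both $o_{\bbP}(1)$ by (\ref{th-ideal-1}). Hence (\ref{asymp-chi-sq}) holds with $k_G=|G|$, which by the discussion preceding the theorem gives (\ref{eq:conv1}). For (\ref{th-ideal-2}), Proposition~\ref{prop-1}(ii) gives $(\matP_G\matQ_G)(\hbmu_G-\bmu_G^*)=(\matP_G\matX_G)(\hbbeta_G-\bbeta_G^*)=\matP_G\bepsa-\Rem_G$; conditionally on $\matX$, $\bxi:=\matP_G\bepsa/\sigma\sim\sfN_n(\bzero,\matP_G)$ exactly, so $\|\bxi\|_2=O_{\bbP}(\sqrt{|G|})$, and writing $(\matP_G\bepsa-\Rem_G)/\hsigma=\bxi+(\sigma/\hsigma-1)\bxi-\Rem_G/\hsigma$ and using $|\sigma/\hsigma-1|=o_{\bbP}(|G|^{-1/2})$ and $\|\Rem_G\|_2/\hsigma=o_{\bbP}(1)$ finishes the proof.

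The main obstacle is the pair of sub-Gaussian random-matrix estimates used above: the in-group control of $(\matZ_G^o)^T\matZ_G^o/n$ and $(\matZ_G^o)^T\matX_G/n$, and especially the uniform-over-$M$-groups cross-term bound $\|(\matZ_G^o)^T\matX_{G_k\setminus G}/n\|_S\lesssim\omega_k'$ with block sizes $|G|$ and $|G_k|$ allowed to grow. These are precisely the places where the sub-Gaussianity assumption (\ref{subGaussian-cond}) and the dimension restrictions in (\ref{th-ideal-1}) are consumed, and they are what replaces the elementary one-dimensional estimates available in the single-coefficient case of Subsection~\ref{subsec:biascorrect}.
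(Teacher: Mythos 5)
Your proposal is correct and follows essentially the same route as the paper: the exact decomposition of Proposition~\ref{prop-1}, sub-Gaussian spectral concentration for the within-group and cross-group products involving $\matZ_G^o$ (the content of Lemma~\ref{lm-subGaussian}), the scaled group Lasso rates of Theorem~\ref{th-7} for the working assumption, and the reduction to (\ref{asymp-chi-sq}) as in Theorem~\ref{th-opt}. The only (harmless) deviation is that you bound $\Rem_G$ directly through the groupwise coefficient errors $\|(\hbbeta^{(init)}-\bbeta^*)_{G_k\setminus G}\|_2$ rather than through $\|\matP_G\matQ_{G_k\setminus G}\|_S$ and the prediction errors, which lets you bypass the paper's step of showing $\max_k M_k=O_{\bbP}(1)$.
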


Theorem \ref{th-ideal}, whose proof is merged with that of Theorem \ref{th:oraclerate} and provided 
in Subsection~\ref{subsec: Feasibility},
%relegated to the \nameref{sec:appendix}, 
asserts that with a combination of the $\{\hbbeta^{(init)},\hsigma\}$ 
in (\ref{scaled-init}) and the ideal $\matZ_G=\matZ_G^o$ in (\ref{Gamma}), 
bias correction provides valid asymptotic chi-squared-type statistical inference for the group effect 
$\bmu_G^*\in\bbR^n$ and the coefficient group $\bbeta_G^*\in\bbR^{|G|}$. 
However, this theorem requires a sub-Gaussian design and the knowledge of $\matZ_G^o$. 

To extend this approach to more general settings with unknown $\matZ_G^o$ 
or even deterministic $\matX$, 
we follow a strategy parallel to the one described in Subsection \ref{subsec:biascorrect}: 
We may directly approximate $\matZ_G^o$ via a regularized multivariate regression in (\ref{Gamma}) 
or mimic properties of $\matZ_G^o$ with a regularized optimization scheme. The question is to make 
a right choice of the regularization on $\matZ_G$ to match properties one can reasonably expect 
from $\{\hbbeta^{(init)},\hsigma\}$. 
%, or equivalently to match the underlying assumptions on 
%the components $\matX$, $\bbeta$, and $\bepsa$ of the linear model (\ref{eq:mod1}). 
To this end, we extract, as the following working assumption, some properties 
{}{of} $\{\hbbeta^{(init)},\hsigma\}$ which are proven and used in our analysis 
under the conditions of Theorem \ref{th-ideal}.  

\smallskip
\noindent{\bf Working assumption:} 
{\it Suppose that we have estimators $\hbbeta^{(init)}$ and $\hsigma$ 
of a $(g,s)$ strong group sparse signal $\bbetastar$ and scale parameter $\sigma$ respectively 
satisfying 
\begin{align}\label{eq:prelimbetaconsis}
\left|\frac{\hsigma}{\sigma^*}-1\right| + 
\frac{1}{n^{1/2}}\sum_{j=1}^M \frac{\omega_{*,j}}{\sigma}
\Big\|\matX_{G_j}\hbbeta_{G_j}^{(init)}-\matX_{G_j}\bbeta^*_{G_j}\Big\|_2
= \calO_{\bbP}\left(\dfrac{s+g\log M}{n}\right),
\end{align}
where $\omega_{*,j} = \sqrt{|G_j|/n}+\sqrt{(2/n)\log M}$, 
$\sigstar = \norm{\matX\bbetastar-\by}{2}/\sqrt{n}$ is an oracle estimate of the noise level $\sigma$, 
and $G_j$, $s$ and $g$ are as in (\ref{def:strnggrpsprse}). }
\smallskip

The above working assumption still aims to {}{take} advantage of the group sparsity (\ref{def:strnggrpsprse}) 
as the mixed prediction error and the complexity measure $s+g\log M$ dictate. 
However, compared with the more specific (\ref{scaled-init}), it provides a direction for regularizing 
a proper $\matZ_G$ for any estimator satisfying (\ref{eq:prelimbetaconsis}), possibly with deterministic designs. 

Under the strong group sparsity (\ref{def:strnggrpsprse}), error bounds in the $\ell_{2}$ and 
mixed $\ell_{2,1}$ norms for group regularized methods have been established in the literature 
as we reviewed in the introduction. In Section \ref{sec:consisres}, we contribute to this literature by 
obtaining $\ell_{2}$ as well as weighted mixed $\ell_{2}$ norm error bounds of the group Lasso and 
its scaled version (\ref{scaled-init}). 
We will also provide a faster rate of convergence of the scale parameter $\sigma$ 
under strong group sparsity, which is crucial to our analysis. 
In particular, we will prove in Section \ref{sec:consisres} that the error bound for 
$\hbbeta^{(init)}$ in (\ref{eq:prelimbetaconsis}) is attainable under proper conditions on the 
design matrix if the group Lasso is used with a proper estimate of $\sigma$, 
and the error bounds for both $\hbbeta^{(init)}$ and $\hsigma$ in (\ref{eq:prelimbetaconsis}) are 
attainable if the scaled group Lasso is used; see Corollaries \ref{cor:grpsparse-ns} and \ref{cor-2} 
and Theorem~\ref{th-7}. 

It is worthwhile to point out that the working assumption exhibits the benefit of strong group sparsity, 
compared with a reasonable working assumption based on the $\ell_0$ sparsity condition 
$\|\bbetastar\|_0\le s$ as given in (\ref{lasso-error}). 
In general, the error bounds in (\ref{eq:prelimbetaconsis}) and those in (\ref{lasso-error}) do not 
strictly dominate each other. However, if in both the scenarios, $s$ is of similar order and $g\ll s$, 
then (\ref{eq:prelimbetaconsis}) dominates the rates necessary for univariate inference 
as given in (\ref{lasso-error}).

An alternative possibility is to use an $\ell_1$ regularized estimate of $\bGamma_{-G,j}$ in the univariate 
regression of $\bx_j$ against $\matX_{-G}$ for all individual $j\in G$. 
This has been considered in \cite{VDG14}. However, the advantage of such a scheme is unclear 
compared with directly using $(\hbeta_j,j\in G)^T$ with the $\hbeta_j$ in (\ref{LDPE-j}). 
It is worthwhile to mention that the central limit theorem for (\ref{LDPE-j}) came with large 
deviation bounds to justify Bonferroni adjustments \citep{Zhang2014}, so that 
(\ref{LDPE-j}) and its variations can be used to test $H_0:\bbeta^*_G=\bzero$ versus an  
alternative hypothesis on $\|\bbeta^*_G\|_\infty$, especially when 
an $\ell_1$ regularized $\hbbeta^{(init)}$ is used as in \cite{VandeGeer2013}. 
However, we are interested in extensions of traditional $F$- or chi-squared tests 
for $\ell_2$ alternatives and taking advantage of the group sparsity of $\bbeta^*$. 
Such methods require control of $\ell_{2}$ and groupwise weighted $\ell_{2}$ error and accordingly, 
a proper choice $\matZ_G$ to match the working assumption. 

\subsection{An optimization strategy} \label{subsec:optstrategy}
In this subsection we propose a multivariate extension of the optimization strategy 
(\ref{opt-z_j}) to match an initial estimator satisfying the working assumption (\ref{eq:prelimbetaconsis}) 
in the bias correction scheme (\ref{LDPE-G}). 

It follows from Proposition \ref{prop-1} that the estimator (\ref{LDPE-G}) depends on the resulting 
$\matZ_G$ only through the orthogonal projection $\matP_G$ to the range of $\matZ_G$ 
under a necessary assumption for the bias correction scheme to work, 
as we commented below Proposition~\ref{prop-1}. 
Moreover, it follows from (\ref{prop-1-1}) and (\ref{prop-1-4}) that the desired $\matP_G$, 
which depends on $\matX$ only, must be close to $\matQ_G$ and approximately orthogonal to 
$\matQ_{G_k\setminus G}$ for all $k$ with $\Gk\not\subseteq G$. 

Let $\matQ$ be the projection to $\calR(\matX)$. In the low-dimensional case of $\rank(\matX) = p<n$, 
we may set $\matP_G = \matQ\prod_{\Gk\not\subseteq G}\matQ_{G_k\setminus G}^\perp$, 
so that (\ref{LDPE-G}) is the least squares estimator of $\bbeta_G$ with $\Rem_G={\bf 0}$ 
in (\ref{prop-1-1}) and (\ref{prop-1-2}), and 
$T_G^2/|G|$ is the $F$-statistic for testing $H_0:\bbeta_G=\bzero$ when $\hsigma$ is 
the degree adjusted estimate of noise level based on the residuals of the least squares estimator. 
Of course, we need to relax the requirement of the orthogonality condition 
$\matP_G\matQ_{G_k\setminus G}=\bzero$ for all $\Gk\not\subseteq G$ in the high-dimensional case. 

Analytically, the key is to prove the upper bound $\|\Rem_G/\sigma\|_2 = o_{\bbP}(1)$ 
in (\ref{asymp-chi-sq}). 
To this end we use the formula in (\ref{prop-1-1}) and 
the working assumption in (\ref{eq:prelimbetaconsis}) to obtain  
\bel{eq:testC1} 
\|\Rem_G\|_2 
&\le& \left(\max_{\Gk\not\subseteq G}M_k\omega^{-1}_{*,k}\norm{\matP_G\matQ_{G_k}}{S}\right)
\sum_{\Gk\not\subseteq G}\omega_{*,k} \norm{\hbmu_{G_k}^{(init)}-\bmu_{G_k}}{2} %= o_{\bbP}(1)
\cr & = & \calO_{\bbP}\left(\dfrac{s+g\log M}{n^{1/2}}\right)
 \left(\max_{\Gk\not\subseteq G}M_k \omega^{-1}_{*,k}\norm{\matP_G\matQ_{G_k}}{S}\right), 
%\left(\max_{\Gk\not\subseteq G} \omega^{-1}_{k}\norm{\matP_G\matQ_{G_k}}{S}\right)
\eel
where $\omega_{*,k}=\sqrt{|G_k|/n}+\sqrt{(2/n)\log M}$ and 
$M_k=\max_{\|\matX_{G_k}\bu_{G_k}\|_2=1}\|\matX_{G_k\setminus G}\bu_{G_k\setminus G}\|_2$. 
We note that $M_k=1$ when $\matX_{G_k}^T\matX_{G_k}/n = \matI_{d_k\times d_k}$. 
Since $(s+g\log M)/n$ is the order of the mixed $\ell_{2,1}$ error bound for $\hbbeta$, 
we may treat $\eta_G = \max_{\Gk\not\subseteq G}M_k \omega^{-1}_{*,k}\norm{\matP_G\matQ_{G_k}}{S}$
as a scalar bias factor. 

The error bound in (\ref{eq:testC1}) motivates the following extension of (\ref{opt-z_j}): 
\begin{align}\label{opt}
\matP_G = \argmin_{\matP}\Big\{\|\matP\matQ_G^\perp\|_S: \matP = \matP^{2}= \matP^{T},\ 
\|\matP_G\matQ_{{\Gk\setminus G}}\|_S\le \omega_k'\ \forall\ \Gk\not\subseteq G\Big\}. 
\end{align}
We say that $\matP_G$ is a feasible solution of (\ref{opt}) if it satisfies all the constraints. The optimization problem (\ref{opt}) is a generalization of (\ref{opt-z_j}) and provides geometric insights. 
As $(\matP_G\matQ_{G})^\dag$ is a multivariate noise factor for the inference of $\bmu_G^*$, 
we may define $\tau_G = \|(\matP_G\matQ_{G})^\dag\|_S$ as a scalar noise factor. 
The quantity $\|\matP_G\matQ_{G}^\perp\|_S$, which 
is the so-called `gap' between the subspaces spanned by $\matP_G$ and $\matQ_{G}$, 
equals $(1-\tau_G^{-2})^{1/2}$. Thus, minimizing $\|\matP_G\matQ_{G}^\perp\|_S$ is equivalent to 
minimizing the noise factor $\tau_G$. 
This minimization is done subject to upper-bounds on the components 
$\norm{\matP_G\matQ_{{\Gk\setminus G}}}{S}$ of the bias factor. 
Thus, (\ref{opt}) is an extension of (\ref{opt-z_j}) as we discussed immediately after (\ref{opt-z_j}). 
When $p<n$ and $\omega_k'=0$, $\bP_G$ in (\ref{opt}) is the projection to 
the orthogonal complement of $\sum_{\Gk\not\subseteq G}\calR(\matX_{{\Gk\setminus G}})$ in 
$\calR(\matX)$, or equivalently the linear space 
$\big(\prod_{\Gk\not\subseteq G}\matQ_{{\Gk\setminus G}}^\perp\big) \calR(\matX)$. 

\smallskip
In the following theorem, we provide a summary of the analysis we have carried out above. 

\begin{theorem}\label{th-opt}
Let $\matP_G$ be a feasible solution of (\ref{opt}) satisfying $\|\matP_G\matQ_G^\perp\|_S<1$, 
and $\hbbeta_G$ be as in (\ref{LDPE-G}) with $\matZ_G = \matP_G$ 
and certain $\{\hbbeta^{(init)},\hsigma\}$ satisfying (\ref{eq:prelimbetaconsis}).  
Suppose $\bepsa\sim \sfN_{n}(\bzero,\sigma^{2}\matI_{n}\,)$, $\rank(\matX_G)=|G|$, and 
\bel{th-1-1}
\frac{|G|}{n} \to 0,\quad 
\dfrac{s+g \log M}{n^{1/2}}\left(\frac{|G|^{1/2}}{n^{1/2}}+ 
\max_{\Gk\not\subseteq G}M_k\frac{\omega_k'}{\omega_{*,k}}\right) \rightarrow 0,\quad
\eel
with the $M_k$ in (\ref{eq:testC1}). 
%and $|G|(\hsigma/\sigma-1)\to 0$ holds for the noise level estimator $\hsigma$ in (\ref{test}). 
Then, (\ref{eq:conv1}) and (\ref{th-ideal-2}) hold. 
\end{theorem}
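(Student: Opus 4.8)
The plan is to reduce the claim, via Proposition~\ref{prop-1}(ii) and the discussion leading to (\ref{asymp-chi-sq})--(\ref{eq:conv1}), to verifying the single display $\sqrt{k_G}\,|\hsigma/\sigma-1| + \|\Rem_G/\sigma\|_2 = o_{\bbP}(1)$ with $k_G=\rank(\matP_G)$, and then to read off (\ref{th-ideal-2}) from the exact identity (\ref{prop-1-4}). Accordingly I would proceed in three steps: (a) apply Proposition~\ref{prop-1}(ii) with $\matZ_G=\matP_G$; (b) control the scale term $\sqrt{k_G}\,|\hsigma/\sigma-1|$; (c) control the standardized bias $\|\Rem_G/\sigma\|_2$.

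For (a) I must check $\rank(\matP_G)=\rank(\matX_G)=|G|$. The hypothesis $\|\matP_G\matQ_G^\perp\|_S<1$ rules out any nonzero vector of $\calR(\matP_G)$ lying in $\calR(\matX_G)^\perp$, so $\matQ_G$ is injective on $\calR(\matP_G)$ and $\rank(\matP_G)\le|G|$; combined with the non-degeneracy built into a feasible solution of (\ref{opt}) --- namely $\rank(\matP_G\matQ_G)=|G|$, so that the noise factor $(\matP_G\matQ_G)^\dag$ in the discussion after (\ref{opt}) is well defined --- one gets $\rank(\matP_G)\ge\rank(\matP_G\matQ_G)=|G|$ and hence equality. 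Since $\matP_G$ is an orthogonal projection, $\matP_{G,0}=\matP_G$ in the notation of Proposition~\ref{prop-1}, and part (ii) supplies $(\matP_G\matQ_G)(\hbmu_G-\bmu_G^*)=(\matP_G\matX_G)(\hbbeta_G-\bbeta_G^*)=\matP_G\bepsa-\Rem_G$ with $\Rem_G$ as in (\ref{prop-1-1}), together with $k_G=|G|$.

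For (b), writing $\sigstar=\|\bepsa\|_2/\sqrt n$, the $\chi^2_n$ law of $n(\sigstar/\sigma)^2$ gives $|\sigstar/\sigma-1|=\calO_{\bbP}(n^{-1/2})$; combined with $|\hsigma/\sigstar-1|=\calO_{\bbP}((s+g\log M)/n)$ from the working assumption (\ref{eq:prelimbetaconsis}) this yields $|\hsigma/\sigma-1|=\calO_{\bbP}\big((s+g\log M)/n+n^{-1/2}\big)$, hence $|G|^{1/2}|\hsigma/\sigma-1|=\calO_{\bbP}\big(|G|^{1/2}(s+g\log M)/n\big)+\calO_{\bbP}\big((|G|/n)^{1/2}\big)=o_{\bbP}(1)$ by the product condition $\tfrac{s+g\log M}{n^{1/2}}\cdot\tfrac{|G|^{1/2}}{n^{1/2}}\to 0$ and by $|G|/n\to 0$ in (\ref{th-1-1}). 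For (c), the bound on $\|\Rem_G/\sigma\|_2$ is exactly computation (\ref{eq:testC1}): from (\ref{prop-1-1}), $\|\Rem_G\|_2\le\sum_{\Gk\not\subseteq G}\|\matP_G\matQ_{\Gk\setminus G}\|_S\,\|\hbmu_{\Gk\setminus G}^{(init)}-\bmu^*_{\Gk\setminus G}\|_2$; the feasibility constraint $\|\matP_G\matQ_{\Gk\setminus G}\|_S\le\omega_k'$ and the definition of $M_k$ pass to the full-group prediction errors $\|\matX_{\Gk}(\hbbeta^{(init)}-\bbeta^*)_{\Gk}\|_2$, and then (\ref{eq:prelimbetaconsis}) gives $\|\Rem_G/\sigma\|_2=\calO_{\bbP}\big(\tfrac{s+g\log M}{n^{1/2}}\max_{\Gk\not\subseteq G}M_k\,\omega_k'/\omega_{*,k}\big)=o_{\bbP}(1)$ by the remaining part of (\ref{th-1-1}).

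Steps (b) and (c) together establish (\ref{asymp-chi-sq}), whence (\ref{eq:conv1}) follows as argued in the text --- using the exact $\chi^2_{|G|}$ law of $\|\matP_G\bepsa\|_2^2/\sigma^2$ and the central limit theorem (\ref{eq:largegrp}) when $|G|\to\infty$. For (\ref{th-ideal-2}) I would divide the identity of step (a) by $\hsigma$: $(\matP_G\matX_G)(\hbbeta_G-\bbeta_G^*)/\hsigma=\matP_G\bepsa/\sigma+(\sigma/\hsigma-1)\,\matP_G\bepsa/\sigma-\Rem_G/\hsigma$; since $\|\matP_G\bepsa\|_2/\sigma=\calO_{\bbP}(|G|^{1/2})$ and $|G|^{1/2}|\sigma/\hsigma-1|=o_{\bbP}(1)$ by step (b), the middle term is $o_{\bbP}(1)$ in $\ell_2$, while the last term is $o_{\bbP}(1)$ by step (c), so the right side equals $\sfN_n(\bzero,\matP_G)+o_{\bbP}(1)$; the statement for $(\matP_G\matQ_G)(\hbmu_G-\bmu_G^*)/\hsigma$ then follows from the first equality in (\ref{prop-1-4}). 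The step I expect to be the real obstacle is the rank verification in (a) --- pinning down $\rank(\matP_G)=|G|$ from the feasibility of (\ref{opt}) and $\|\matP_G\matQ_G^\perp\|_S<1$, so that Proposition~\ref{prop-1}(ii) genuinely applies and the clean decomposition (\ref{prop-1-4}) with $k_G=|G|$ is in hand; once that is settled, the theorem is the bookkeeping of steps (b)--(c) already foreshadowed by (\ref{eq:testC1}), where the only care needed is to charge each factor ($|G|^{1/2}$, $s+g\log M$, $M_k$, $\omega_k'/\omega_{*,k}$) against the correct term of (\ref{th-1-1}).
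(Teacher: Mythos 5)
Your proof is correct and follows essentially the same route as the paper's: reduce to (\ref{asymp-chi-sq}) via Proposition \ref{prop-1}, bound $\sqrt{|G|}\,|\hsigma/\sigma-1|$ using the working assumption together with $|\sigstar/\sigma-1|=O_{\bbP}(n^{-1/2})$, and bound $\|\Rem_G/\sigma\|_2$ exactly as in (\ref{eq:testC1}) using feasibility and (\ref{th-1-1}). On the rank verification you are in fact more explicit than the paper, which simply asserts that $\|\matP_G\matQ_G^\perp\|_S<1$ gives $\rank(\matP_G\matX_G)=|G|$; the extra non-degeneracy you invoke, $\rank(\matP_G\matQ_G)=|G|$, is not literally a constraint of (\ref{opt}) but is precisely what is established for the constructed solution in Theorem \ref{th:oraclerate}(i), so your reading matches the intended use.
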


\begin{proof}[Proof of Theorem \ref{th-opt}] 
Since $\|\matP_G\matQ_G^\perp\|_S<1$, we have $\rank(\matP_G\matX_G)=\rank(\matX_G)=|G|$, 
so that the condition of Proposition \ref{prop-1} (i) holds, which implies the condition of 
Proposition \ref{prop-1} (ii) with $k_G=|G|$.   
It follows from (\ref{eq:prelimbetaconsis}), (\ref{eq:testC1}), (\ref{th-1-1}) 
and the feasibility of $\matP_G$ in (\ref{opt}) that (\ref{asymp-chi-sq}) holds, 
which implies (\ref{eq:conv1}) and (\ref{th-ideal-2}). 
Note that (\ref{eq:prelimbetaconsis}) and (\ref{th-1-1}) imply 
$\left|\sigma/\hsigma -1\right| = o_{\bbP}(|G|^{-1/2}) + O_{\bbP}(n^{-1/2})= o_{\bbP}(|G|^{-1/2})$ 
in the proof for the first component of (\ref{asymp-chi-sq}). 
\end{proof}

A modification of (\ref{opt}), which removes the factors $M_k$ in condition (\ref{th-1-1}), is to 
re-parameterize the effect of the $k$-th group by writing 
\bes
\matX_{G_k}\bbeta_{G_k} = {\widetilde {\matX}}_{G_k\cap G}\bbeta_{G_k\cap G}
+ \matX_{G_k\setminus G}{\widetilde \bbeta}_{G_k\setminus G}, 
\ees
where ${\widetilde {\matX}}_{G_k\cap G} = \matQ_{G_k\setminus G}^\perp\matX_{G_k\cap G}$ 
and ${\widetilde \bbeta}_{G_k\setminus G}$ is a solution of 
$\matX_{G_k\setminus G}{\widetilde \bbeta}_{G_k\setminus G} 
= \matQ_{G_k\setminus G}\matX_{G_k}\bbeta_{G_k}$. 
We recall that $\matQ_{G_k\setminus G}$ is the orthogonal projection to the column space of 
$\matX_{G_k\setminus G}$. As this within-group re-parameterization retains $\bbeta_{G_k\cap G}$ 
and $\matX_{G_k\setminus G}$, 
\bes
\by = {\widetilde {\matX}}_G\bbeta_G 
+ \sum_{\Gk\not\subseteq G} \matQ_{G_k\setminus G}\bmu_{G_k} + \bepsa
= {\widetilde {\matX}}_G\bbeta_G 
+ \sum_{\Gk\not\subseteq G}\matX_{G_k\setminus G}{\widetilde \bbeta}_{G_k\setminus G} + \bepsa,  
\ees
where ${\widetilde {\matX}}_G$ is the $n\times |G|$ matrix given by 
${\widetilde {\matX}}_G\bv_G = \sum_{k=1}^M \big(\matQ_{G_k\setminus G}^\perp\matX_{G_k\cap G}\big)
\bv_{G\cap G_k}$. 
As ${\widetilde {\matX}}_{G_k\cap G}$ is orthogonal to $\matX_{G_k\setminus G}$, 
we have $M_k=1$ after {}{re-parametrization}. 
Moreover, the strong group sparsity condition $\supp(\bbeta^*)\subset G_{S^*}$ 
and the working assumption (\ref{eq:prelimbetaconsis}) are invariant under the re-parameterization. 
We note that ${\widetilde {\matX}}_G = \matX_G$ when 
$\matX_{G_k}^T\matX_{G_k}/n = \matI_{G_k\times G_k}$ for all $k$ with $0<|G_k\setminus G|<|G_k|$.  
Let ${\widetilde {\matQ}}_G$ be the projection to the column space of ${\widetilde {\matX}}_G$. 
The optimization scheme and statistical methods are changed accordingly as follows: 
\bel{general-opt}
\matP_G &=& \argmin_{\matP}\Big\{\|\matP{\widetilde {\matQ}}_G^\perp\|_S: \matP = \matP^{2}= \matP^{T},\ 
\|\matP_G\matQ_{G_k\setminus G}\|_S\le \omega_k'\ \forall\ k\Big\}, 
\cr \hbbeta_G &=& (\matP_G{\widetilde {\matX}}_G)^\dag \matP_G
\left(\by - \sum_{\Gk\not\subseteq G} \matQ_{G_k\setminus G}\hbmu_{G_k}^{(init)}\right), 
\hbox{ when}\ \rank(\matP_G{\widetilde {\matX}}_G)=|G|,
\\ \nonumber T_G &=& \frac{1}{\hsigma}
\left\|\matP_G\left(\by - \sum_{\Gk\not\subseteq G} \matQ_{G_k\setminus G}\hbmu_{G_k}^{(init)}\right)\right\|_2. 
\eel
With $\{\matX_G,\matQ_G\}$ replaced 
by $\{{\widetilde {\matX}}_G,{\widetilde {\matQ}}_G\}$, 
our analysis yields the following theorem. 

\begin{theorem}\label{th-opt-gen} 
Let $\matP_G$, $\hbbeta_G$ and $T_G$ be given by (\ref{general-opt}) 
with $\|\matP_G{\widetilde {\matQ}}_G^\perp\|_S<1$.  
Suppose $\bepsa\sim \sfN_{n}(\bzero,\sigma^{2}\matI_{n}\,)$, $\rank(\matX_G)=|G|$, 
and (\ref{eq:prelimbetaconsis}) and (\ref{th-1-1}) hold with $M_k=1$.  
Then, (\ref{eq:conv1}) and (\ref{th-ideal-2}) hold with $\{\matX_G,\matQ_G\}$ replaced 
by $\{{\widetilde {\matX}}_G,{\widetilde {\matQ}}_G\}$. 
\end{theorem}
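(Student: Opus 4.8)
The plan is to derive Theorem \ref{th-opt-gen} from (the proof of) Theorem \ref{th-opt} by passing to the within-group re-parametrized regression $\by=\widetilde{\matX}_G\bbeta_G^*+\sum_{\Gk\not\subseteq G}\matX_{G_k\setminus G}\widetilde{\bbeta}_{G_k\setminus G}^*+\bepsa$ displayed just before the statement, and checking that every step of that proof survives the substitution $\{\matX_G,\matQ_G\}\mapsto\{\widetilde{\matX}_G,\widetilde{\matQ}_G\}$ with the constant $M_k$ lowered to $1$. First I would note that this re-parametrized model is again of the form (\ref{eq:mod2}), with $\widetilde{\matX}_G$ in place of $\matX_G$ and nuisance blocks $\matX_{G_k\setminus G}$, so Proposition \ref{prop-1} applies to it verbatim; moreover the nuisance-block mean for group $k$ is $\matX_{G_k\setminus G}\widetilde{\bbeta}_{G_k\setminus G}^*=\matQ_{G_k\setminus G}\bmu_{G_k}^*$, whose plug-in from $\hbbeta^{(init)}$ is $\matQ_{G_k\setminus G}\hbmu_{G_k}^{(init)}$ --- exactly the term occurring in (\ref{general-opt}). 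Since $\hbbeta^{(init)}$ and $\hsigma$ are untouched by the re-parametrization, the working assumption (\ref{eq:prelimbetaconsis}) continues to hold as stated, and the strong group sparsity $\supp(\bbetastar)\subset G_{S^*}$ is preserved.

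Next I would handle the rank bookkeeping, just as in the proof of Theorem \ref{th-opt}: the hypothesis $\|\matP_G\widetilde{\matQ}_G^\perp\|_S<1$ forces $\rank(\matP_G\widetilde{\matX}_G)=\rank(\widetilde{\matX}_G)$, and with $\rank(\matX_G)=|G|$ this gives $\rank(\widetilde{\matX}_G)=|G|$ (trivially so in the pre-normalized case $\matX_{G_k}^T\matX_{G_k}/n=\matI$, where $\widetilde{\matX}_G=\matX_G$), hence $k_G=\rank(\matP_G)=|G|$. Then Proposition \ref{prop-1} (i) and (ii), applied to the re-parametrized model, give the error identity $(\matP_G\widetilde{\matX}_G)(\hbbeta_G-\bbeta_G^*)=\matP_G\bepsa-\Rem_G$ and its prediction form, with remainder $\Rem_G=\sum_{\Gk\not\subseteq G}\matP_G\matQ_{G_k\setminus G}\big(\hbmu_{G_k}^{(init)}-\bmu_{G_k}^*\big)$; in the null case $\bmu_G^*=\bzero$ this reproduces the $T_G$ of (\ref{general-opt}).

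The two bounds behind (\ref{asymp-chi-sq}) then go through as in Theorem \ref{th-opt}, the one change being that the factor $M_k$ disappears. For the bias term, $\|\Rem_G\|_2\le\sum_{\Gk\not\subseteq G}\|\matP_G\matQ_{G_k\setminus G}\|_S\,\|\matQ_{G_k\setminus G}(\hbmu_{G_k}^{(init)}-\bmu_{G_k}^*)\|_2\le\big(\max_{\Gk\not\subseteq G}\omega_k'/\omega_{*,k}\big)\sum_{\Gk\not\subseteq G}\omega_{*,k}\,\|\hbmu_{G_k}^{(init)}-\bmu_{G_k}^*\|_2$, where the feasibility of $\matP_G$ in (\ref{general-opt}) gives $\|\matP_G\matQ_{G_k\setminus G}\|_S\le\omega_k'$ and the trivial bound $\|\matQ_{G_k\setminus G}\bv\|_2\le\|\bv\|_2$ takes the place of the factor $M_k$ in (\ref{eq:testC1}). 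By (\ref{eq:prelimbetaconsis}) the right side is $\calO_\bbP\big(\sigma(s+g\log M)n^{-1/2}\max_{\Gk\not\subseteq G}\omega_k'/\omega_{*,k}\big)=o_\bbP(\sigma)$ by the second condition in (\ref{th-1-1}) with $M_k=1$. For the noise level, $\sqrt{k_G}\,|\hsigma/\sigma-1|\le\sqrt{|G|}\,\big(|\hsigma/\sigma^*-1|+|\sigma^*/\sigma-1|\big)$, whose first summand is $\sqrt{|G|}\cdot\calO_\bbP((s+g\log M)/n)=o_\bbP(1)$ by (\ref{th-1-1}) and whose second is $\sqrt{|G|}\cdot O_\bbP(n^{-1/2})=o_\bbP(1)$ by $|G|/n\to0$. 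Hence (\ref{asymp-chi-sq}) holds, and the argument immediately following (\ref{asymp-chi-sq}) in the text delivers (\ref{eq:conv1}) and, after dividing the error identity by $\hsigma$, (\ref{th-ideal-2}), all with $\{\matX_G,\matQ_G\}$ replaced by $\{\widetilde{\matX}_G,\widetilde{\matQ}_G\}$.

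Apart from this, the proof is line-for-line that of Theorem \ref{th-opt}, so I expect the only genuinely substantive point to be the bookkeeping in the first display of the previous paragraph: confirming that the nuisance-block error relevant for group $k$ under the re-parametrization is $\matQ_{G_k\setminus G}(\hbmu_{G_k}^{(init)}-\bmu_{G_k}^*)$, which is automatically dominated by the full groupwise prediction error $\|\hbmu_{G_k}^{(init)}-\bmu_{G_k}^*\|_2$ that appears in the working assumption (\ref{eq:prelimbetaconsis}). This is precisely what allows the constraint $\|\matP_G\matQ_{G_k\setminus G}\|_S\le\omega_k'$ to control $\Rem_G$ without the multiplicative factor $M_k$, and hence allows condition (\ref{th-1-1}) to be imposed with $M_k=1$.
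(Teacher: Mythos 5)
Your proposal is correct and follows essentially the same route as the paper, which proves Theorem \ref{th-opt-gen} simply by asserting that the proof of Theorem \ref{th-opt} goes through verbatim under the substitution $\{\matX_G,\matQ_G\}\mapsto\{{\widetilde{\matX}}_G,{\widetilde{\matQ}}_G\}$; your write-up supplies exactly the details the paper leaves implicit (applying Proposition \ref{prop-1} to the re-parametrized model, the rank bookkeeping, and the observation that $\|\matQ_{G_k\setminus G}(\hbmu_{G_k}^{(init)}-\bmu_{G_k}^*)\|_2\le\|\hbmu_{G_k}^{(init)}-\bmu_{G_k}^*\|_2$ replaces the factor $M_k$ in (\ref{eq:testC1})). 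Your identification of that last inequality as the one substantive point is precisely the reason the re-parametrization was introduced, so the argument is sound.
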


\begin{remark}\label{remark-1} 
It is worthwhile to note that Theorems \ref{th-opt} and \ref{th-opt-gen} only require a feasible solution satisfying $\|\matP_G{\matQ}_G^\perp\|_S<1$ and $\|\matP_G{\widetilde {\matQ}}_G^\perp\|_S<1$ respectively, which can be directly verified for any given $\matP_G$. Still, the optimality criterion on $\matP_G$ aims to have smaller confidence regions and more powerful tests through (\ref{eq:conv1}). 
In practice, it suffices to find a feasible solution with $\|\matP_G{\matQ}_G^\perp\|_S$ or 
$\|\matP_G{\widetilde {\matQ}}_G^\perp\|_S$ reasonably bounded away from 1. 
As the optimization problems in (\ref{opt}) and (\ref{general-opt}) %fairly 
are still somewhat abstract for the moment, %in nature, 
in the following we prove the feasibility of $\matP_G$ in (\ref{opt}) for sub-Gaussian designs 
and describe penalized regression methods to find feasible solutions of (\ref{opt}) and (\ref{general-opt}).
\end{remark}

\subsection{Feasibility of relaxed orthogonal projection for random designs} \label{subsec: Feasibility} 
In this subsection, we discuss the existence of feasible solutions of the optimization in (\ref{opt}) 
for a sub-Gaussian design matrix satisfying (\ref{subGaussian-cond}) with 
$\bbE\matX = {\bf 0}$ and a positive-definite population Gram matrix $\bbE(\matX^T\matX/n) = \bSigma$. 
The feasibility is established under the assumption of the groupwise regression model as described in (\ref{Gamma}). 

We group the effects in the linear regression model (\ref{Gamma}) as follows: 
\bel{eq:mod1ForXj}
\matX_G = \matX_{-G}\bGamma_{-G,G}+\matZ_G^o  
= \sum_{k=1}^M \matX_{G_k\setminus G}\bGamma_{G_k\setminus G,G}+\matZ_G^o,
\eel
where $\bGamma_{-G,G} = \bSigma_{-G,-G}^{-1}\bSigma_{-G,G}$. 
Under this model assumption, $\matZg^{o}$ is the true residual after projection of $\matXg$ onto 
the range of $\matX_{-G}$. 
Let $\matP_G^o$ be the orthogonal projection to the column space of $\matZ_G^o$,  
\bel{P^o}
\matP_G^o = \matZ_G^o\Big((\matZ_G^o)^T\matZ_G^o\Big)^\dag(\matZ_G^o)^T. 
\eel
The following theorem establishes the distributional convergence results in (\ref{eq:conv1}) 
and (\ref{th-ideal-2}) for $\hbbetag$ by establishing the feasibility of $\matP_{G}^o$ 
as a solution of the optimization scheme in (\ref{opt}).

\begin{theorem}\label{th:oraclerate}
Suppose the sub-Gaussian condition (\ref{subGaussian-cond}) holds 
with $0<c_*\le$eigen$(\bSigma)\le c^*$ and fixed $\{v_0,c_*,c^*\}$. 
Let $\omega_k' = \xi n^{-1/2}\big(\sqrt{|G|+|G_k\setminus G|}+\sqrt{\log(M/\delta)}\big)$. \\
(i) Let $\lam_{\min}$ be the smallest eigenvalue of 
$\{\bSigma_{G,G}^{-1/2}(\bSigma^{-1})_{G,G}\bSigma_{G,G}^{-1/2}\}^{1/2}$,  and let 
$\xi n^{-1/2}\big(\sqrt{|G|}+\sqrt{\log(M/\delta)}\big)\le\eta_n$, 
and $a_n = \lam_{\min}(1-\eta_n)/(1+\eta_n)$. 
Then, there exist numerical constants $\eps_0\in (0,1)$ and $\xi_0<\infty$ such that when 
$\xi\ge \xi_0v_0$ and $\eta_n\le\eps_0$, 
\bel{th-3-1}
%\bbP\Big\{\hbox{$\bP_G^o$ is a feasible solution of (\ref{general-opt}) with $\|\bP_G^o\matQ_G\|_S\le a_0$}\Big\}
\bbP
\begin{cases}\hbox{\ (\ref{opt}) has a feasible solution $\bP_G$ with} \cr
\hbox{\ $\rank(\matP_G)=\rank(\matP_G\matX_G)=|G|$ and $\|\bP_G\matQ_G^\perp\|_S\le \sqrt{1-a_n^2}$}\end{cases}\Bigg\}
\ge 1-\delta. 
\eel
(ii)  Suppose the strong sparsity condition the sample size condition (\ref{th-ideal-1}) hold 
and that $\{\hbbeta^{(init)},\sigma\}$ is as in Theorem~\ref{th-ideal}. Then, 
the working assumption (\ref{eq:prelimbetaconsis}) holds. \\
(iii) Suppose the working assumption (\ref{eq:prelimbetaconsis}) 
and the sample size condition (\ref{th-ideal-1}) hold. 
Then, (\ref{eq:conv1}) and (\ref{th-ideal-2}) hold. 
\end{theorem}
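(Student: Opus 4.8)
The plan is to prove the three parts in order, with part~(i) --- the feasibility of the oracle projection $\matP_G^o$ of (\ref{P^o}) as a solution of (\ref{opt}) --- carrying essentially all of the probabilistic content, and then obtaining (ii) from the convergence-rate results of Section~\ref{sec:consisres} and (iii) by specializing the argument of Theorem~\ref{th-opt} to the feasible solution $\matP_G=\matP_G^o$ supplied by~(i).

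For part~(i), the constraints $\matP_G^o=(\matP_G^o)^2=(\matP_G^o)^T$ hold by construction, so the work is to verify the bias constraints $\|\matP_G^o\matQ_{G_k\setminus G}\|_S\le\omega_k'$ for all $\Gk\not\subseteq G$ together with the gap bound $\|\matP_G^o\matQ_G^\perp\|_S\le\sqrt{1-a_n^2}$. I would first record that under the groupwise model (\ref{eq:mod1ForXj}) the rows of $(\matX_{-G},\matZ_G^o)$ are iid sub-Gaussian with population cross-covariance $\bbE(\matX_{-G}^T\matZ_G^o)/n=\bzero$ and with $\bbE((\matZ_G^o)^T\matZ_G^o)/n$ equal to the conditional covariance $\bSigma_{GG}-\bSigma_{G,-G}\bSigma_{-G,-G}^{-1}\bSigma_{-G,G}=((\bSigma^{-1})_{GG})^{-1}$, whose eigenvalues also lie between $c_*$ and $c^*$. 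The two bounds then follow from three standard consequences of sub-Gaussian concentration, all valid because $\max_k|G_k|/n+|G|/n\to0$: (a) the extreme singular values of $\matX_{G_k\setminus G}^T\matX_{G_k\setminus G}/n$ and of $(\matZ_G^o)^T\matZ_G^o/n$ lie within a factor $(1\pm\eta_n)^2$ of their population values; (b) a uniform-over-$k$ operator-norm bound $\|\matX_{G_k\setminus G}^T\matZ_G^o/n\|_S\lesssim v_0 n^{-1/2}(\sqrt{|G|+|G_k\setminus G|}+\sqrt{\log(M/\delta)})$, obtained by an $\eps$-net over the product of the unit spheres in $\Re^{|G_k\setminus G|}$ and $\Re^{|G|}$ together with a union bound over the $M$ groups --- of the order of $\omega_k'$ once $\xi\ge\xi_0 v_0$; and (c) concentration of the quadratic forms $\bu\mapsto\|\matX_G\bu\|_2^2/n$ and $\bu\mapsto\|\matZ_G^o\bu\|_2^2/n$ about $\bu^T\bSigma_{GG}\bu$ and $\bu^T((\bSigma^{-1})_{GG})^{-1}\bu$. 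Writing $\|\matP_G^o\matQ_{G_k\setminus G}\|_S^2=\|\matQ_{G_k\setminus G}\matP_G^o\matQ_{G_k\setminus G}\|_S$ and expanding both projections through their defining matrices reduces the bias constraints to (a) and (b); and since $1-\|\matP_G^o\matQ_G^\perp\|_S^2=\min_{\bv\in\calR(\matX_G):\,\|\bv\|_2=1}\|\matP_G^o\bv\|_2^2$, the gap bound reduces, via (a)--(c), $\matX_G=\matZ_G^o+\matX_{-G}\bGamma_{-G,G}$, and the smallness of $(\matZ_G^o)^T\matX_{-G}/n$, to the population quantity $\lam_{\min}^2$ up to the $(1-\eta_n)/(1+\eta_n)$ slack. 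On the same event $\rank(\matP_G^o)=\rank(\matZ_G^o)=|G|$, and $\|\matP_G^o\matQ_G^\perp\|_S<1$ then forces $\rank(\matP_G^o\matX_G)=\rank(\matX_G)=|G|$; hence all constraints of (\ref{opt}) hold with probability at least $1-\delta$.

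For part~(ii), the scaled group Lasso (\ref{scaled-init}) with the weights prescribed in Theorem~\ref{th-ideal} is exactly the estimator analyzed in Section~\ref{sec:consisres}; I would note that the group restricted-eigenvalue / compatibility-type conditions required there (see Corollaries~\ref{cor:grpsparse-ns} and~\ref{cor-2} and Theorem~\ref{th-7}) hold on a high-probability event for the sub-Gaussian design --- again a routine consequence of (\ref{subGaussian-cond}), of the eigenvalues of $\bSigma$ lying in $[c_*,c^*]$, and of $\max_k|G_k|/n\to0$ --- and then quote from those results the mixed prediction-error and scale-estimation bounds, which are precisely the two terms in (\ref{eq:prelimbetaconsis}).

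For part~(iii), I would take $\matZ_G=\matP_G^o$, which on the event of part~(i) is a feasible solution of (\ref{opt}) with $\|\matP_G^o\matQ_G^\perp\|_S<1$, and then repeat the proof of Theorem~\ref{th-opt}. The bound $\|\matP_G^o\matQ_G^\perp\|_S<1$ supplies the rank hypotheses, so Proposition~\ref{prop-1} applies with $k_G=|G|$; combining (\ref{eq:testC1}), the working assumption (\ref{eq:prelimbetaconsis}), the feasibility bounds $\|\matP_G^o\matQ_{G_k\setminus G}\|_S\le\omega_k'$, and the sample-size condition (\ref{th-ideal-1}) --- which plays the role of (\ref{th-1-1}), with $M_k=O(1)$ because $\matX_{G_k}^T\matX_{G_k}/n$ is well-conditioned for a sub-Gaussian design --- gives $\|\Rem_G/\sigma\|_2=o_{\bbP}(1)$, while $\sqrt{|G|}\,|\hsigma/\sigma-1|=o_{\bbP}(1)$ follows from (\ref{eq:prelimbetaconsis}) and (\ref{th-ideal-1}); hence (\ref{asymp-chi-sq}) holds and (\ref{eq:conv1}) and (\ref{th-ideal-2}) follow. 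Here the scalar noise factor $\tau_G=\|(\matP_G^o\matQ_G)^\dag\|_S$ stays bounded by a constant depending only on $c_*,c^*$, so no further condition is needed. The step I expect to be the main obstacle is the gap bound in part~(i): it requires tracking the joint randomness of $\matZ_G^o$, $\matX_G$ and $\matX_{-G}$, propagating the $(1\pm\eta_n)$ fluctuations through the Schur-complement relation, and matching the precise population constant $a_n^2$, with the uniform cross-term bound~(b), which must reproduce the exact size of $\omega_k'$ across all $M$ groups, the other delicate ingredient.
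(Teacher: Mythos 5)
Your proposal follows essentially the same route as the paper: part (i) is established by showing that the oracle projection $\matP_G^o$ of (\ref{P^o}) is feasible for (\ref{opt}), part (ii) is quoted from the results of Section \ref{sec:consisres} (Theorem \ref{th-7}), and part (iii) reruns the argument of Theorem \ref{th-opt} with the additional observation that $\max_k M_k=O_{\bbP}(1)$ for a well-conditioned sub-Gaussian design. The only difference is packaging: the paper routes all of the concentration steps you label (a)--(c) through Lemma \ref{lm-subGaussian}, applied to the pairs $(\matB_{G_k\setminus G},\matB_G^o)$ and $(\matB_G,\matB_G^o)$ with $\matB_G^o=(\bSigma^{-1})_{*,G}(\bSigma^{-1})_{G,G}^{-1}$, which delivers the bias constraints $\|\matP_G^o\matQ_{G_k\setminus G}\|_S\le\omega_k'$ and the gap bound $\sqrt{1-a_n^2}$ in a single application rather than via your separate Gram-matrix, cross-product, and quadratic-form bounds.
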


Theorem \ref{th:oraclerate} removes the requirement of the knowledge of $\matZ_G^o$ 
in Theorem \ref{th-ideal}. 
It shows the existence of at least one feasible solution of (\ref{opt}) 
and that for such a choice of $\matP_G$, the $\chi^{2}$ based inference can be carried out as in 
(\ref{eq:conv1}) and (\ref{th-ideal-2}). 
However, (\ref{opt}) is not a convex program.  
In Subsection \ref{subsec:findSolution} we will describe group Lasso programs as convexation of (\ref{opt}). 

The proof of Theorem \ref{th:oraclerate} requires the following lemma on the probabilistic control of 
the spectral norm of the product of two random matrices with sub-Gaussian rows. 
As an extension of that result, spectral norm control of the product of two orthogonal projection 
matrices is also obtained. These probabilistic bounds in Lemma \ref{lm-subGaussian} are of independent interest. See Remark \ref{rem:lemma1} for more details.

\begin{lemma}\label{lm-subGaussian} Let $\matB_k$ be deterministic matrices with with $p$ rows and $\rank(\matB_k)= r_k$ for $k=\{1,2\}$. 
Let $\matP_k$ be the projection to the range of $\matX\matB_k$ and 
\[\bOmega_{1,2} = ((\matB_1^{T}\bSigma\matB_1)^\dag)^{1/2}\matB_1^{T}\bSigma\matB_2
((\matB_2^{T}\bSigma\matB_2)^\dag)^{1/2}.\] 
Let $r = \rank(\bOmega_{1,2})$ and $1\ge\lam_1\ge\cdots\ge \lam_r>0$ be the nonzero 
singular values of $\bOmega_{1,2}$. Define $\lam_{\min}=\lam_r I\{r=r_1=r_2\}$. 
Then, there exists a numerical constant $C_0>1$ such that 
when $C_0v_0\sqrt{t/n+(r_1+r_2)/n}<\eps_0<1$, 
\bel{lm-1-1}
\bbP\left\{\|((\matB_1^{T}\bSigma\matB_1)^\dag)^{1/2}\matB_1^{T}(\matX^T\matX/n)\matB_2
((\matB_2^{T}\bSigma\matB_2)^\dag)^{1/2}-\bOmega_{1,2}\|_S \le \eps_0\right\} \ge 1- e^{-t},  
\eel
and
\bel{lm-1-2}
\bbP\left\{\|\matP_1\matP_2\|_S \le \frac{\lam_1(1+\eps_0)}{1-\eps_0}, 
\|\matP_1\matP_2^\perp\|_S^2 \le 1 - \left(\frac{\lam_{\min}(1-\eps_0)}{1+\eps_0}\right)^2 \right\} \ge 1- e^{-t}. 
\eel
Moreover, $\lam_1<1$ iff $\rank(\matB_1,\matB_2)=r_1+r_2$ 
and $\lam_{\min}>0$ iff $\rank(\matB_1^T\matB_2)=r_1=r_2$. 
\end{lemma}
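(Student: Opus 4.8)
The plan is to reduce the whole lemma to a single spectral-norm concentration bound for an empirical covariance matrix restricted to a subspace of dimension at most $r_1+r_2$, and then to extract both displays and the closing equivalences by linear algebra. First I would whiten the design: set $\matW=\matX\bSigma^{-1/2}$, so that $\matW$ has iid mean-zero rows with $\bbE(\matW^T\matW/n)=\matI_p$, and its sub-Gaussian parameter is still $v_0$ because $\bfe_i^T\matW\bfc=\bfe_i^T\matX(\bSigma^{-1/2}\bfc)$ and $(\bSigma^{-1/2}\bfc)^T\bSigma(\bSigma^{-1/2}\bfc)=\|\bfc\|_2^2$, so (\ref{subGaussian-cond}) transfers verbatim. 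Put $\matA_k=\bSigma^{1/2}\matB_k((\matB_k^T\bSigma\matB_k)^\dag)^{1/2}$; then $\matA_k$ has orthonormal columns spanning $\bSigma^{1/2}\calR(\matB_k)$, an $r_k$-dimensional space, $\calR(\matW\matA_k)=\calR(\matX\matB_k)$ so $\matP_k$ is the orthogonal projection onto $\calR(\matW\matA_k)$, and $\bOmega_{1,2}=\matA_1^T\matA_2$. With $\widehat{\bSigma}_W=\matW^T\matW/n$, the left-hand side of (\ref{lm-1-1}) is exactly $\|\matA_1^T(\widehat{\bSigma}_W-\matI_p)\matA_2\|_S$, and the last sentence of the lemma is then immediate: $\lam_1=\|\matA_1^T\matA_2\|_S<1$ precisely when $\calR(\matA_1)\cap\calR(\matA_2)=\{\bzero\}$, and $\bOmega_{1,2}$ is square of full rank with $\lam_r>0$ precisely when $\calR(\matA_1)$ and $\calR(\matA_2)$ are in general position, both of which translate back to the stated rank conditions on $\matB_1,\matB_2$ since $\bSigma$ is invertible.

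Next I would establish (\ref{lm-1-1}). Let $\matU$ be an orthonormal basis of $\calR(\matA_1)+\calR(\matA_2)$, of dimension $d\le r_1+r_2$, and write $\matA_k=\matU\matC_k$ with $\|\matC_k\|_S=1$; then $\|\matA_1^T(\widehat{\bSigma}_W-\matI_p)\matA_2\|_S\le\|\matU^T(\widehat{\bSigma}_W-\matI_p)\matU\|_S=\|(\matW\matU)^T(\matW\matU)/n-\matI_d\|_S$, the deviation of the empirical covariance of $n$ iid sub-Gaussian vectors in $\bbR^d$ from its identity population covariance. A standard $\eps$-net argument over the unit sphere of $\bbR^d$, using the sub-exponential tail of $(\bfe_i^T\matW\matU\bv)^2$ implied by (\ref{subGaussian-cond}) together with a Bernstein bound, gives $\|(\matW\matU)^T(\matW\matU)/n-\matI_d\|_S\le C v_0(\sqrt{(d+t)/n}+(d+t)/n)$ with probability at least $1-e^{-t}$ for a numerical constant $C$. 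Since $d\le r_1+r_2$ and $\eps_0<1$, the hypothesis $C_0v_0\sqrt{t/n+(r_1+r_2)/n}<\eps_0$ with $C_0$ a large enough multiple of $C$ forces this bound to be at most $\eps_0$, which is (\ref{lm-1-1}) and, on the same event, also gives $\|\matA_k^T(\widehat{\bSigma}_W-\matI_p)\matA_k\|_S\le\eps_0$ for $k=1,2$.

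It remains to deduce (\ref{lm-1-2}) deterministically on that event. The bounds $\|\matA_k^T(\widehat{\bSigma}_W-\matI_p)\matA_k\|_S\le\eps_0$ say that $\matM_k^T\matM_k/n=\matA_k^T\widehat{\bSigma}_W\matA_k$ has all eigenvalues in $[1-\eps_0,1+\eps_0]$, where $\matM_k=\matW\matA_k$; in particular $\matM_k$ has full column rank, $\matP_k=\matM_k(\matM_k^T\matM_k)^{-1}\matM_k^T=\widehat{\matA}_k\widehat{\matA}_k^T$ with $\widehat{\matA}_k=\matM_k(\matM_k^T\matM_k)^{-1/2}$ orthonormal, and $\rank(\matP_k)=\rank(\matP_k\matX\matB_k)=r_k$. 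Since $\widehat{\matA}_1$ is an isometry and $\widehat{\matA}_2^T$ a co-isometry, $\|\matP_1\matP_2\|_S=\|\widehat{\matA}_1^T\widehat{\matA}_2\|_S=\|\widehat{\bOmega}_{1,2}\|_S$ and $\|\matP_1\matP_2^\perp\|_S^2=\|\matI_{r_1}-\widehat{\bOmega}_{1,2}\widehat{\bOmega}_{1,2}^T\|_S=1-\sigma_{\min}(\widehat{\bOmega}_{1,2})^2$, where $\widehat{\bOmega}_{1,2}=(\matA_1^T\widehat{\bSigma}_W\matA_1)^{-1/2}(\matA_1^T\widehat{\bSigma}_W\matA_2)(\matA_2^T\widehat{\bSigma}_W\matA_2)^{-1/2}$ is the empirical analogue of $\bOmega_{1,2}$. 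Writing the middle factor as $\bOmega_{1,2}$ plus an error of spectral norm at most $\eps_0$ (this is (\ref{lm-1-1})) and the outer factors as $(\matI+\matE_{kk})^{-1/2}$ with $\|\matE_{kk}\|_S\le\eps_0$, Weyl's inequality controls every singular value of $\widehat{\bOmega}_{1,2}$ two-sidedly in terms of $\lam_1$ and, when $r=r_1=r_2$, in terms of $\lam_r$, giving bounds of the form displayed in (\ref{lm-1-2}) after invoking $\lam_i\le1$.

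The main obstacle is this last step: converting the additive control of (\ref{lm-1-1}) into the multiplicative-looking bounds of (\ref{lm-1-2}), in particular obtaining the lower bound $\sigma_{\min}(\widehat{\bOmega}_{1,2})\ge\lam_{\min}(1-\eps_0)/(1+\eps_0)$, which genuinely needs the dimension coincidence $r=r_1=r_2$ (so that $\bOmega_{1,2}$ is square and bounded away from rank deficiency) plus careful two-sided perturbation bookkeeping, and may require a reallocation of constants in $\eps_0$. A robust fallback I would keep in reserve is to note that, on the event of (\ref{lm-1-1}), $\matW/\sqrt n$ restricted to $\calR(\matA_1)+\calR(\matA_2)$ is an injection whose singular values all lie in $[\sqrt{1-\eps_0},\sqrt{1+\eps_0}]$, and that a linear map of condition number $\kappa$ distorts the cosines of the principal angles between two subspaces by at most a factor depending only on $\kappa$; since $\|\matP_1\matP_2\|_S$ and $1-\|\matP_1\matP_2^\perp\|_S^2$ are precisely the extreme such cosines and their squares, this delivers the multiplicative form of (\ref{lm-1-2}) directly. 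The $\eps$-net concentration is routine but is the step that consumes the $(r_1+r_2)/n$ term in the hypothesis; everything else is bookkeeping.
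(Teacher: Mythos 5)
Your proposal is correct and follows essentially the same route as the paper: your $\matW\matA_k$ is the paper's standardized matrix $\matZ_k$ up to a rotation, the $\eps$-net/Bernstein concentration of the cross-Gram blocks is exactly the paper's proof of (\ref{lm-1-1}) (the paper bounds the three blocks $(j,k)$ separately with a union bound rather than through one combined subspace), and the identification of the singular values of $\matP_1\matP_2$ with those of the normalized empirical cross-covariance followed by Weyl perturbation is precisely how the paper derives (\ref{lm-1-2}). The additive-versus-multiplicative bookkeeping you flag as the main remaining obstacle is a genuine subtlety, but the paper glosses over it in the same way, simply invoking Weyl's inequality at that step.
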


We have moved the proof of Lemma \ref{lm-subGaussian} to the \nameref{sec:appendix} 
to avoid a distraction from the main results of this section. 
Based on Lemma \ref{lm-subGaussian}, we prove Theorems \ref{th-ideal} and \ref{th:oraclerate} as follows. 

\begin{proof}[Proofs of Theorems \ref{th-ideal} and \ref{th:oraclerate}]
By (\ref{P^o}), $\matP_G^o$ is the orthogonal projection to the range of $\matZ_G^o = \matX\matB_G^o$ 
with $\matB_G^o=(\bSigma^{-1})_{*,G}(\bSigma^{-1})_{G,G}^{-1}$. 
By definition, $\matQ_{G_k\setminus G}$ is the projection to the range of 
$\matX_{G_k\setminus G} = \matX\matB_{G_k\setminus G}$ 
and $\matQ_{G}$ to the range of $\matX_{G}=\matX\matB_G$, 
where $\matB_{G_k\setminus G}$ and $\matB_G$ are 0-1 diagonal matrices projecting to the indicated spaces. 
Define $\bOmega = \bSigma_{G,G}^{-1/2}\big\{(\bSigma^{-1})_{G,G}\}^{1/2}$.  
We have 
$\matB_{G_k\setminus G}^T\bSigma\matB_G^o=\bSigma_{G_k\setminus G,*}\matB_G^o=0$,  
$\matB_G^T\bSigma\matB_G^o=\bSigma_{G,*}\matB_G^o = (\bSigma^{-1})_{G,G} 
= (\matB_G^o)^T\bSigma\matB_G^o$ and 
\bes
(\matB_G^T\bSigma\matB_G)^{-1/2}\matB_G^T\bSigma\matB_G^o
\big((\matB_G^o)^T\bSigma\matB_G^o\big)^{-1/2} 
= \bSigma_{G,G}^{-1/2}\big\{(\bSigma^{-1})_{G,G}\}^{1/2} = \bOmega \in\bbR^{|G|\times |G|}. 
\ees
Moreover, $\bOmega = \bSigma_{G,G}^{-1/2}\big\{(\bSigma^{-1})_{G,G}\}^{1/2}$ is a $|G|\times |G|$ 
matrix of rank $|G|$ and the smallest 
singular value of $\bOmega$ is $\lam_{\min}$. 
Thus, by (\ref{lm-1-2}) of Lemma \ref{lm-subGaussian} and the definition of $\omega_k'$ and $a_n$, 
\bes
\bbP\Big\{\|\bP_G\matQ_{G_k\setminus G}\|_S\le\omega_k'\ \forall k\le M,\ 
\|\bP_G\matQ_G^\perp\|_S\le \sqrt{1-a_n^2}\Big\}
\ge 1-\delta. 
\ees
This yields (\ref{th-3-1}). Moreover, (\ref{th-3-1}) also holds when $\matP_G = \matP_G^o$ 
or equivalently $\matZ_G=\matZ_G^o$ is used as in Theorem \ref{th-ideal}. 
As part (ii) of Theorem \ref{th:oraclerate} restates Theorem \ref{th-7} in Section \ref{sec:consisres}, 
it remains to prove $\max_{{G_k\setminus G \neq \emptyset}}M_k=O_{\bbP}(1)$ 
in view of Theorem \ref{th-opt}. To this end, we notice that due to the condition 
$|G_k| + g\log M \ll n$, (\ref{lm-1-1}) of Lemma \ref{lm-subGaussian} with $\matB_1=\matB_2$
implies $\|\matX_{A}^T\matX_{A}/n - \bSigma_{A,A}\|_S=o_{\bbP}(1)$ for both 
$A=G_k$ and $A=G_k\setminus G$ and all $k$ with $G_k\setminus G \neq \emptyset$,
so that $\max_{{G_k\setminus G \neq \emptyset}}M_k = o_{\bbP}(1)+O(1)$. 
\end{proof}

\begin{remark}\label{rem:lemma1}
Since Lemma \ref{lm-subGaussian} is a crucial ingredient for Theorems \ref{th-ideal} and \ref{th:oraclerate}, 
we highlight a few key points. Let us write $p=p_{1}+p_{2}$ and $\matI_{p}= [\matI_{p\times p_{1}}\, \matI_{p\times p_{2}}]$. Consider the choices: $\matB_{1}= \matI_{p\times p_{1}}$ and $\matB_{2}= \matI_{p\times p_{2}}$. Also consider the partition $\matX= [\matX_{1}\, \matX_{2}]$ so that $\matX_{i}= \matX\matB_{i}$. Writing 
\begin{align*}
\bSigma = \begin{bmatrix}
\bSigma_{11} & \bSigma_{12}\\
\bSigma_{21} & \bSigma_{22}
\end{bmatrix}\text{ where } \bSigma_{11} \in \Re^{p_{1}\times p_{1}}, \bSigma_{12} \in \Re^{p_{1}\times p_{2}}, \bSigma_{22} \in \Re^{p_{2}\times p_{2}},
\end{align*}
it follows that ${\rm cov} (\matX_{1}, \matX_{2})= \bSigma_{12}$. For such choices, Lemma \ref{lm-subGaussian} gives,
\begin{align}\label{eq:crossprodspecnorm}
\norm{\bSigma^{-1/2}_{11}\Big({\matX_{1}^{T}\matX_{2}}/{n}- \bSigma_{12}\Big)\bSigma_{22}^{-1/2}}{S} \leq C \sqrt{{t}/{n} + {(p_{1}+p_{2})}/{n}}
\end{align}
with probability at least $1-e^{-t}$. This result provides a spectral norm bound on the cross-product of two correlated random matrices with sub-Gaussian rows. The probability bound in (\ref{eq:crossprodspecnorm}) is a generalization of a similar result for product of two mutually independent random matrices with iid $\sfN(0,1)$ entries, given in Proposition D.1 in the supplement to \cite{Ma13}. Control of spectral norm of product of random and deterministic matrices have been studied as well; see \cite{Ver08}, \cite{RudVer13} etc. In particular, spectral norm concentration of product of a fixed projection matrix and a random matrix have been derived in \cite[Remark~3.3]{RudVer13}. In comparison, our results in (\ref{lm-1-2}) studies product of two projection matrices with their range being column spaces of correlated random matrices with sub-Gaussian rows.
\end{remark}

\subsection{Finding feasible solutions and construction of tests} \label{subsec:findSolution}
While (\ref{th-3-1}) of Theorem~\ref{th:oraclerate} guarantees a feasible solution of (\ref{opt}), the practicality of the optimization scheme (\ref{opt}) has not yet been addressed.  
%(\ref{general-opt}), 
We discuss here penalized multivariate regression methods for finding feasible solutions of 
(\ref{opt}) and (\ref{general-opt}). 
As the only difference between (\ref{opt}) and (\ref{general-opt}) is the respective use of 
$\matX_G$ and ${\widetilde \matX}_G$, we provide formulas here only for (\ref{opt}), with the understanding that 
formulas for (\ref{general-opt}) can be generated in the same way with 
$\matX_G$ replaced by ${\widetilde \matX}_G$. 

The optimization problem in (\ref{opt}) is carried out over the non-convex space of orthogonal projection matrices. In the following, we provide a convex program for obtaining such orthogonal projection matrices under the linear regression framework of (\ref{eq:mod1ForXj}). 
In model (\ref{eq:mod1ForXj}), a general formulation of the penalized multivariate regression is 
\bel{pen-multi-reg}
\hbGamma_{-G,G} = \argmin_{\bGamma_{-G,G}}
\left\{\frac{1}{2n}\left\|\matX_G - \sum_{G_k\not\subseteq G} 
\matX_{G_k\setminus G}\bGamma_{G_k\setminus G,G}\right\|_F^2
+ R(\bGamma_{-G,G})\right\}, 
\eel
where $\|\cdot\|_F$ is the Frobenius norm and $R(\bGamma_{-G,G})$ is a penalty function. Define 
\bel{pen-multi-Z}
\matZ_G = \matX_G - \sum_{G_k\not\subseteq G}  
\matX_{G_k\setminus G}\hbGamma_{G_k\setminus G,G},\quad 
\matP_G = \matZ_G(\matZ_G ^T  \matZ_G)^{-1}\matZ_G ^T. 
\eel
Our main interest is to find a feasible solution of (\ref{opt}) and (\ref{general-opt}), 
not to estimate $\bGamma_{-G,G}$. The following weighted group nuclear penalty matches the dual of the constraint 
in (\ref{opt}) and (\ref{general-opt}): 
\bel{n-pen}
R(\bGamma_{-G,G}) = \sum_{G_k\not\subseteq G} \frac{\xi \omega_k''}{n^{1/2}}
\Big\|\matX_{G_k\setminus G}\bGamma_{G_k\setminus G,G}\Big\|_N. 
\eel
%Let $\matZ_G=\matX_G-\matX_{-G}\hbGamma_{-G,G}$. 
Recall that nuclear norm of a matrix $\matA$, denoted $\norm{\matA}{N}$, is the sum of absolute values of the singular values of $\matA$. It follows from the KKT conditions for (\ref{pen-multi-reg}) with (\ref{n-pen}) that 
\bel{dual}
\left\|\matQ_{G_k\setminus G}\matZ_G/\sqrt{n}\right\|_S \le \xi\omega_k''. 
\eel
If we set $\omega_k''=\omega_k$ in (\ref{n-pen}), condition (\ref{th-1-1}) follows from  
\bel{th-2-1a}
\frac{|G|}{n} \to 0,\quad 
\dfrac{s+g \log M}{n^{1/2}}\left(\frac{|G|^{1/2}}{n^{1/2}} +  
\xi \|(\matZ_G^T\matZ_G/n)^{-1/2}\|_S\right) \to 0, 
\eel
provided $\max_{G_k\not\subseteq G}M_k=O(1)$ in the case of Theorem \ref{th-opt}.  Moreover, as in \cite{VDG14}, under the assumption $\lambda_{\min}(\matZ_{G})>c>0$, only 
${(s+g \log M)}/{n^{1/2}} + |G|/n \to 0$ suffices.

When the group sizes are not too large, one may consider replacing the weighted group nuclear penalty with a weighted group Frobenius penalty:
\bel{eq:frobpen}
R(\bGamma_{-G,G}) = \sum_{G_k\not\subseteq G} 
\frac{\xi \omega_k''}{n^{1/2}}\Big\|\matX_{G_k\setminus G}\bGamma_{G_k\setminus G,G}\Big\|_F.
\eel
The KKT conditions for (\ref{pen-multi-reg}) with (\ref{eq:frobpen}) yield 
\bes
\left\|\matQ_{G_k\setminus G}\matZ_G/\sqrt{n}\right\|_S 
\le \left\|\matQ_{G_k\setminus G}\matZ_G/\sqrt{n}\right\|_F \le \xi\omega_k'', 
\ees
so that (\ref{th-2-1a}) is still valid. 
However, this second layer of inequality indicates that the resulting procedure may not be as efficient 
as the (\ref{n-pen}) penalty.  
In any case, as discussed in Remark~\ref{remark-1}, 
it is reasonable to proceed with the computed $\bZ_G$ 
as long as the resulting $\|\matP_G\matQ_G^\perp\|_S$ is not too close to 1.   
One important benefit of the formulation of the groupwise penalty as in (\ref{eq:frobpen}) is that it can be conveniently computed using the standard group Lasso algorithms; see \cite{Yuan2006}, \cite{HuangBM12} etc. As we will show in Section \ref{sec:simures}, group Lasso performs well for empirical studies. 
We summarize {}{our} proposal and main results as follows. 

\smallskip
\noindent \textbf{Summary: } Statistical inference for groups of variables can be carried out as follows:
\begin{itemize}
\item Given $(\by,\matX)$ and a group structure $\{G_{j}:1\leq j\leq M\}$, construct the initial estimates 
$(\hbbeta^{(init)}, \hsigma)$ via the scaled group Lasso (\ref{scaled-init}) or any alternative leading to (\ref{eq:prelimbetaconsis}).
\item Given a variable group $G$ of interest, construct relaxed projection estimate 
$\matP_{G}= \matZg(\matZg^{T}\matZ_{g})^{-1}\matZ_{G}^{T}$ 
by the penalized procedure (\ref{pen-multi-reg}) and (\ref{pen-multi-Z}) 
with the penalty function (\ref{n-pen}) or (\ref{eq:frobpen}). 
\item Carry out statistical inference according to (\ref{eq:conv1}) and (\ref{th-ideal-2})
\end{itemize}

\smallskip
\noindent \textbf{Benefit of group sparsity: } Existing sample size condition for statistical inference 
of a univariate parameter at $n^{-1/2}$ rate requires, 
\[n \gg \|\bbetastar\|_0^2(\log p)^2.\]
See for exampe \cite{Zhang2014, VandeGeer2013, Javanmard2013b}. 
As discussed below (\ref{expansion}), direct application of these results to approximate chi-square 
group inference requires an extra factor $|G|$: 
\[n \gg |G|\times \|\bbetastar\|_0^2(\log p)^2.\]
If the true parameter $\bbetastar$ is $(g,s)$ strong group sparse with $s \asymp \norm{\bbetastar}{0}$, the sample size conditions in (\ref{th-1-1}), (\ref{th-ideal-1}) and (\ref{th-2-1a}) clearly demonstrate the benefit of group sparsity by incorporating the smaller estimation error bound as in \cite{HZ10} and removing the extra 
$|G|$. In particular, our sample size requirement becomes the much weaker 
\[n \gg \big(s  + g\log p\big)^2\]
for approximate chi-square inference when 
$|G|\lesssim \min_{G_k\not\subseteq G}\{|G_k|+\log(M/\delta)\}$ in (\ref{th-ideal-1}) or 
$\xi \|(\matZ_G^T\matZ_G/n)^{-1/2}\|_S=O(1)$ in (\ref{th-2-1a}). 

%It is to be noted that when the group structure is such that the nonzero signals are spread over a large number of groups, group Lasso estimation itself does not work well; see \cite{HZ10} who referred to this phenomenon as \emph{weak group sparsity}. Since group Lasso formulation does not incorporate the true sparsity measure $\norm{\bbetastar}{0}$, it is unclear whether one could still exploit the benefits of grouping.

\section{Verification of Working Assumption}\label{sec:consisres} The analysis in the preceding section established the benefits of grouping in constructing $\ell_{2}$ type statistical inference procedures 
for variable groups. One key aspect of our analysis was the working assumption in (\ref{eq:prelimbetaconsis}). These results showed a faster convergence rate for the scale parameter estimate and the coefficient parameter estimate. As promised, in this section we will establish the bona fides of (\ref{eq:prelimbetaconsis}) under the strong group sparsity assumption in (\ref{def:strnggrpsprse}).

%Using the group sparsity of the regression coefficient vector 
%and sparse eigenvalue conditions on the design matrix, 
%\cite{HZ10} provided $\ell_{2}$ oracle inequalities to show the benefits of the group Lasso over the Lasso. 
%In this section we provide similar results on mixed weighted norms for both the group Lasso and 
%the scaled group Lasso under different conditions on the design. 
%%For all subsequent discussions we assume groupwise ortho-normalization as defined in (\ref{eq:grportho}). 
Generally, for high dimensional regression problems, certain regularity conditions on the the design matrix is required for estimation as well as prediction consistency. In the following Subsection \ref{subsec:designassump}, we discuss similar assumptions on the design matrix $\matX$ that ensure the consistency results in (\ref{eq:prelimbetaconsis}). We also derive estimation and prediction consistency result for the non-scaled group Lasso problem in (\ref{eq:grplsoopt-ns}) in Theorem \ref{th:betal12-ns} as an illustration. The main result of this section is Theorem \ref{th:betasgrp} and Corollary \ref{cor:grpsparse-ns} in Subsection \ref{subsec:sgrplasso} and Theorem \ref{th-7} in Subsection~\ref{sub-3-3} that establish the working assumption (\ref{eq:prelimbetaconsis}).

\subsection{Group Lasso and conditions on the design matrix}\label{subsec:designassump} 
In the Lasso problem, 
performance bounds of the estimator are derived based on various conditions on the design matrix, 
for example, the restricted isometry property \citep{CandesT05}, 
the sparse Riesz condition \citep{ZhangH08}, 
the restricted eigenvalue condition \citep{BRT09, Koltchinskii09}, 
the compatibility condition \citep{VDG07, VDG09}, 
and cone invertibility conditions \citep{YeZ10}. 
\cite{VDG09} showed that the compatibility condition is weaker than the restricted eigenvalues condition 
for the prediction and $\ell_1$ loss, 
while \cite{YeZ10} showed that both conditions can be weakened by cone invertibility conditions. 
In the following, we define grouped versions of such conditions, 
which will be used in our study.  

Let us first define a groupwise mixed norm cone for $T\subset \{1\cdots, M\}$ and $\xi\ge 0$ as 
\begin{align}\label{eq:cone1}
\scrC^{(G)}(\xi,\bomega,T) = \Big\{\bu: \hbox{$\sum_{j\in T^{c}}$}\omega_{j}\norm{\bugj}{2} 
\leq \xi \hbox{$\sum_{j\in T}$} \omega_{j}\norm{\bugj}{2}\neq 0\Big\}.
\end{align}
Let $T^* =\{1\cdots, M\}$ and $T\subseteq T'\subseteq T^*$.  
Following \cite{NR08} and \cite{Lounici2011}, the restricted eigenvalue (RE) is defined as 
\begin{align}\label{eq:RE}
{\rm RE}^{(G)}(\xi, \bomega, T,T') = \inf_{\bu} \bigg\{\dfrac{\norm{\matX\bu}{2}}{\sqrt{n}\norm{\bu_{G_{T'}}}{2}}:
\bu \in \scrC^{(G)}(\xi,\bomega, T)\bigg\}.
\end{align}
For the weighted $\ell_{2,1}$ norm, 
the groupwise compatibility constant (CC) can be defined as 
\begin{align}\label{eq:CC}
{\rm CC}^{(G)}(\xi, \bomega, T) 
= \inf_{\bu} \bigg\{\dfrac{\norm{\matX\bu}{2}\big(\sum_{j\in T}\omega^{2}_{j}\big)^{1/2}}
{\sqrt{n}\sum_{j\in T}{\omega_{j}\norm{\bu_{\Gj}}{2}}}: 
\bu \in \scrC^{(G)}(\xi,\bomega, T)\bigg\}.
\end{align}
We note that ${\rm RE}^{(G)}(\xi, \bomega, T,T)$ and the somewhat larger 
${\rm CC}^{(G)}(\xi, \bomega, T)$ are aimed at the prediction and the weighed $\ell_{2,1}$ estimation errors, 
while the smaller ${\rm RE}^{(G)}(\xi, \bomega, T,T^*)$ is aimed at the $\ell_2$ estimation error. 

We also introduce the notion of groupwise cone invertibility factor 
and its sign-restricted version. For $q\ge 1$, the cone invertibility factor (CIF) is defined as
\begin{align}\label{eq:CIF}
{\rm CIF}^{(G)}_q(\xi,\bomega,T,T') 
= \inf_{\bu \in \scrC^{(G)}(\xi,\bomega,T)}
\dfrac{\max_{j}\left[\omega^{-1}_{j}\norm{\matXgj^{T}\matX\bu}{2}\right] 
\big(\sum_{j\in T}\omega^{2}_{j}\big)^{1/q}}
{n \big(\sum_{j\in T'}{\omega_{j}^2(\norm{\bu_{\Gj}}{2}/\omega_j)^q}\big)^{1/q}}. 
%{\rm CIF}^{(G)}_1(\xi,\bomega,T) 
%= \inf_{\bu \in \scrC^{(G)}(\xi,\bomega,T)}
%\dfrac{\max_{j}\left[\omega^{-1}_{j}\norm{\matXgj^{T}\matX\bu}{2}\right] 
%\sum_{j\in T}\omega^{2}_{j}}
%{n \sum_{j\in T}{\omega_{j}\norm{\bu_{\Gj}}{2}}}. 
\end{align}
We note that $\big(\sum_{j\in T'}{\omega_{j}^2(\norm{\bu_{\Gj}}{2}/\omega_j)^q}\big)^{1/q}=\|\bu\|_2$ 
when $T'=T^*$ and $q=2$.  Define 
\begin{align}\label{eq:cone2}
\scrC^{(G)}_{-}(\xi,\bomega, T) = \left\{\bu: \bu \in \scrC^{(G)}(\xi,\bomega, T),\ 
\bugj^{T}\matXgj^{T}\matX\bu \leq 0\ \forall j \in T^{c}\right\}, 
\end{align}
as a sign-restricted cone. We extend the CIF to %sign-restricted cone invertibility factor 
the groupwise sign-restricted cone invertibility factor (SCIF) as
\begin{align}\label{eq:SCIF}
\SCIF^{(G)}_q(\xi,\bomega,T,T') = \inf_{\bu \in \scrC^{(G)}_{-}(\xi,\bomega,T)}
\dfrac{\max_{j}\left[\omega^{-1}_{j}\norm{\matXgj^{T}\matX\bu}{2}\right] 
\big(\sum_{j\in T}\omega^{2}_{j}\big)^{1/q}}
{n \big(\sum_{j\in T'}{\omega_{j}^2(\norm{\bu_{\Gj}}{2}/\omega_j)^q}\big)^{1/q}}. 
%{n \big(\sum_{j\in T}{\omega_{j}\norm{\bu_{\Gj}}{2}}\big)^{2/q-1}\big(\|\bu\|_2/(1+\xi)\big)^{2-2/q}}. 
%\SCIF^{(G)}_1(\xi,\bomega,T) = \inf_{\bu \in \scrC^{(G)}_{-}(\xi,\bomega,T)}
%\dfrac{\max_{j}\left[\omega^{-1}_{j}\norm{\matXgj^{T}\matX\bu}{2}\right] 
%\sum_{j\in T}\omega^{2}_{j}}
%{n \sum_{j\in T}{\omega_{j}\norm{\bu_{\Gj}}{2}}}. 
\end{align}

Similar to the RE and CC, 
${\rm CIF}^{(G)}_1(\xi,\bomega,T,T)$ and $\SCIF^{(G)}_1(\xi,\bomega,T,T)$ are 
aimed at the prediction and weighted $\ell_{2,1}$ losses, 
while ${\rm CIF}^{(G)}_q(\xi,\bomega,T,T^*)$ and  
$\SCIF^{(G)}_q(\xi,\bomega,T,T^*)$ is aimed at the weighted $\ell_{2,q}$ loss 
$\big(\sum_{j=1}^M{\omega_{j}^2(\norm{\bu_{\Gj}}{2}/\omega_j)^q}\big)^{1/q}$. 
We note that the weighted $\ell_{2,q}$ norm is identical to the $\ell_2$ norm for $q=2$. 
For $\bu\in \scrC^{(G)}_{-}(\xi,\bomega,T)$, 
\bes
\norm{\matX\bu}{2}^2\big/
\max_j(\omega_j^{-1}\norm{\matXgj^{T}\matX\bu}{2}) \le \hbox{$\sum_{j\in T}$} \omega_j\norm{\bu_{\Gj}}{2}
\le \|\bu_{G_T}\|_2\big(\hbox{$\sum_{j\in T}$} \omega_j^2\big)^{1/2}
\ees 
by the sign restriction and the Cauchy-Schwarz inequality, so that 
\bel{eq:constineq}
& \{{\rm RE}^{(G)}(\xi, \bomega, T,T)\}^{2} \leq \{{\rm CC}^{(G)}(\xi, \bomega, T)\}^{2} 
\le \SCIF^{(G)}_1(\xi,\bomega,T,T), % \leq (\xi+1)\ {\rm CIF}^{(G)}_1(\xi,\bomega,T), 
\cr & {\rm RE}^{(G)}(\xi, \bomega, T,T^*){\rm CC}^{(G)}(\xi, \bomega, T)
\le \SCIF^{(G)}_2(\xi,\bomega,T,T^*). % \leq (\xi+1)\ {\rm CIF}^{(G)}_1(\xi,\bomega,T), 
\eel
For $\bu\in \scrC^{(G)}(\xi,\bomega,T)$, 
$\SCIF^{(G)}_q(\xi,\bomega,T,T')$ can be replaced by $(\xi+1)\ {\rm CIF}^{(G)}_q(\xi,\bomega,T,T')$
in (\ref{eq:constineq}), as $\norm{\matX\bu}{2}^2\big/
\max_j(\omega_j^{-1}\norm{\matXgj^{T}\matX\bu}{2}) \le 
\sum_j \omega_j\norm{\bu_{\Gj}}{2} \le (1+\xi)\sum_{j\in T} \omega_j\norm{\bu_{\Gj}}{2}$. 
Thus, if a restricted eigenvalue condition holds in the sense of 
$\{{\rm RE}^{(G)}(\xi, \bomega, T)\}^2>\kappa_0$ with a fixed $\kappa_0$, then 
all the other quantities in (\ref{eq:constineq}) and $(\xi+1)\ {\rm CIF}^{(G)}_q(\xi,\bomega,T)$ 
are bounded from below by $\kappa_0$, $q\in \{1,2\}$. 
It follows that the cone invertibility factors provide error bounds of sharper form than (\ref{eq:RE}), 
in view of Theorem~\ref{th:betal12-ns} below and Theorem 3.1 of \cite{Lounici2011}. 

In the following Theorem \ref{th:betal12-ns} we provide the prediction, $\ell_2$ and 
mixed norm consistency results for the non-scaled group Lasso problem 
defined in (\ref{eq:grplsoopt-ns}) under the SCIF condition. 

\begin{theorem}\label{th:betal12-ns}
Let $\hbbeta=\hbbeta(\bomega)$ be a solution of (\ref{eq:grplsoopt-ns}) 
with data $(\matX,\by)$ and 
$\bbeta^*$ be a vector with $\supp(\bbetastar) \subseteq G_{\Sstar}$ for some 
$\Sstar \subset T^*=\{1,\cdots, M\}$. 
%Suppose  $\SCIF^{(G)}_1(\xi,\bomega,S^{*}) >0$ in (\ref{eq:SCIF}) with a certain 
Let $\xi>1$ and define 
\begin{align}
\calE = \left\{\max_{1\le j\le M}\frac{\norm{\matXgj^{T}(\by-\matX\bbetastar)}{2}}
{\omega_{j}n} \leq \dfrac{\xi-1}{\xi+1}\ \right\}.
\end{align}
Then in the event $\calE$, we have
\bel{gl-pred}
\norm{\matX\hbbeta-\matX\bbeta^{*}}{2}^2/n 
%\le \{\xi/(\xi+1)\}\sum_{j\in T}\omega_{j}\norm{\hbbetagj-\bbeta_{\Gj}^{*}}{2}
\le \frac{\{2\xi/(\xi+1)\}^2\sum_{j \in \Sstar}\omega^{2}_{j}}{\SCIF^{(G)}_1(\xi, \bomega, \Sstar,\Sstar)},  
\eel
and for all $q\ge 1$
\begin{align}\label{eq:betal12-ns}
\bigg\{\sum^{M}_{j=1}\omega_{j}^2\bigg(\frac{\norm{\hbbetagj-\bbeta_{\Gj}^{*}}{2}}
{\omega_{j}}\bigg)^q\bigg\}^{1/q}
\leq \dfrac{\{2\xi/(\xi+1)\}\big(\sum_{j \in \Sstar}\omega^{2}_{j}\big)^{1/q}}
{\SCIF^{(G)}_q(\xi, \bomega, \Sstar,T^*)}. 
\end{align}
Moreover, if $\by-\matX\beta^*\sim \sfN_{n}(\bzero,\sigma^{2}\matI_{n}\,)$ and 
$\omega_{j} \ge A\sigma\|\matX_{G_j}\|_{S}
\big\{|G_j|^{1/2} + \sqrt{2\log (M/\delta)}\big\}/n$ 
for some $0<\delta <1$ and $A\geq (\xi+1)/(\xi-1)$, then
\begin{align}\label{eq:Eprob-ns}
\bbP(\calE) > 1-\delta. 
\end{align}
\end{theorem}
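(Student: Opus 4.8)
The plan is to run the usual primal--dual argument for group-penalized least squares, but to route the prediction and weighted mixed-norm estimation bounds through the sign-restricted cone invertibility factors $\SCIF^{(G)}_q$. Throughout, write $\bh=\hbbeta-\bbetastar$ and $\bepsa=\by-\matX\bbetastar$. First I would record the KKT conditions for (\ref{eq:grplsoopt-ns}): there exist vectors $\hbg_j$ with $\hbg_j=\hbbetagj/\norm{\hbbetagj}{2}$ when $\hbbetagj\neq\bzero$ and $\norm{\hbg_j}{2}\le1$ otherwise, such that $\matXgj^{T}(\by-\matX\hbbeta)/n=\omega_j\hbg_j$ for every $j$. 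These give the identity $\matXgj^{T}\matX\bh=\matXgj^{T}\bepsa-n\omega_j\hbg_j$, so that on the event $\calE$, using $\norm{\hbg_j}{2}\le1$ and $\norm{\matXgj^{T}\bepsa}{2}\le\omega_j n(\xi-1)/(\xi+1)$, we obtain the uniform bound $\omega_j^{-1}\norm{\matXgj^{T}\matX\bh}{2}\le n(1+(\xi-1)/(\xi+1))=2\xi n/(\xi+1)$ for all $j$.

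Next I would show that $\bh\in\scrC^{(G)}_{-}(\xi,\bomega,\Sstar)$ on $\calE$, the case $\bh=\bzero$ being trivial. The cone inequality follows from the basic inequality obtained by comparing the objective of (\ref{eq:grplsoopt-ns}) at $\hbbeta$ and at $\bbetastar$, namely $\norm{\matX\bh}{2}^{2}/(2n)\le\bepsa^{T}\matX\bh/n+\sum_j\omega_j(\norm{\bbetagjstar}{2}-\norm{\hbbetagj}{2})$; bounding $\bepsa^{T}\matX\bh/n\le\frac{\xi-1}{\xi+1}\sum_j\omega_j\norm{\bh_{\Gj}}{2}$ on $\calE$, using $\supp(\bbetastar)\subseteq G_{\Sstar}$ together with $\norm{\bbetagjstar}{2}-\norm{\hbbetagj}{2}\le\norm{\bh_{\Gj}}{2}$ for $j\in\Sstar$ and $=-\norm{\bh_{\Gj}}{2}$ for $j\notin\Sstar$, and rearranging yields $\frac{2}{\xi+1}\sum_{j\notin\Sstar}\omega_j\norm{\bh_{\Gj}}{2}\le\frac{2\xi}{\xi+1}\sum_{j\in\Sstar}\omega_j\norm{\bh_{\Gj}}{2}$, i.e. $\bh\in\scrC^{(G)}(\xi,\bomega,\Sstar)$. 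For the sign restriction, fix $j\notin\Sstar$, so $\bh_{\Gj}=\hbbetagj$; the KKT identity gives $\bh_{\Gj}^{T}\matXgj^{T}\matX\bh=\bh_{\Gj}^{T}\matXgj^{T}\bepsa-n\omega_j\bh_{\Gj}^{T}\hbg_j=\bh_{\Gj}^{T}\matXgj^{T}\bepsa-n\omega_j\norm{\bh_{\Gj}}{2}$ (valid whether or not $\hbbetagj=\bzero$), and on $\calE$ the first term is at most $\frac{\xi-1}{\xi+1}n\omega_j\norm{\bh_{\Gj}}{2}$, whence $\bh_{\Gj}^{T}\matXgj^{T}\matX\bh\le-\frac{2}{\xi+1}n\omega_j\norm{\bh_{\Gj}}{2}\le0$.

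With $\bh\in\scrC^{(G)}_{-}(\xi,\bomega,\Sstar)$ in hand, both bounds follow directly from the $\SCIF$ definitions. The elementary inequality $\norm{\matX\bu}{2}^{2}\le\max_j(\omega_j^{-1}\norm{\matXgj^{T}\matX\bu}{2})\sum_{j\in\Sstar}\omega_j\norm{\bu_{\Gj}}{2}$ for $\bu\in\scrC^{(G)}_{-}(\xi,\bomega,\Sstar)$, used just above (\ref{eq:constineq}), applied to $\bh$, combined with the definition of $\SCIF^{(G)}_1(\xi,\bomega,\Sstar,\Sstar)$ which bounds $\sum_{j\in\Sstar}\omega_j\norm{\bh_{\Gj}}{2}$ above by $\max_j(\omega_j^{-1}\norm{\matXgj^{T}\matX\bh}{2})(\sum_{j\in\Sstar}\omega_j^{2})/(n\,\SCIF^{(G)}_1)$, and then with $\max_j(\omega_j^{-1}\norm{\matXgj^{T}\matX\bh}{2})\le2\xi n/(\xi+1)$ from the first step, yields (\ref{gl-pred}). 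For (\ref{eq:betal12-ns}), apply instead the definition of $\SCIF^{(G)}_q(\xi,\bomega,\Sstar,T^{*})$ to $\bh$, which bounds the weighted $\ell_{2,q}$ norm $\{\sum_{j=1}^{M}\omega_j^{2}(\norm{\bh_{\Gj}}{2}/\omega_j)^{q}\}^{1/q}$ above by $\max_j(\omega_j^{-1}\norm{\matXgj^{T}\matX\bh}{2})(\sum_{j\in\Sstar}\omega_j^{2})^{1/q}/(n\,\SCIF^{(G)}_q)$; inserting the same uniform bound on $\max_j(\omega_j^{-1}\norm{\matXgj^{T}\matX\bh}{2})$ gives the claim.

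For (\ref{eq:Eprob-ns}) I would use a standard Gaussian tail argument: for each $j$, $\matXgj^{T}\bepsa=\matXgj^{T}\matQgj\bepsa$ with $\matQgj$ the orthogonal projection onto $\calR(\matXgj)$, so $\norm{\matXgj^{T}\bepsa}{2}\le\norm{\matXgj}{S}\norm{\matQgj\bepsa}{2}$ and $\norm{\matQgj\bepsa}{2}^{2}\sim\sigma^{2}\chi^{2}_{r_j}$ with $r_j=\rank(\matXgj)\le|G_j|$; the $\chi^{2}$ tail inequality $\bbP\{\chi^{2}_{r}\ge(\sqrt{r}+\sqrt{2t})^{2}\}\le e^{-t}$ with $t=\log(M/\delta)$ gives $\bbP\{\norm{\matXgj^{T}\bepsa}{2}\ge\sigma\norm{\matXgj}{S}(\sqrt{|G_j|}+\sqrt{2\log(M/\delta)})\}\le\delta/M$, and a union bound over $j\le M$ together with the hypothesis $\omega_j\ge A\sigma\norm{\matXgj}{S}\{|G_j|^{1/2}+\sqrt{2\log(M/\delta)}\}/n$ and $A\ge(\xi+1)/(\xi-1)$ then shows $\bbP(\calE)\ge1-\delta$. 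The main obstacle is the cone-membership step of the second paragraph --- in particular deriving the sign restriction $\bh_{\Gj}^{T}\matXgj^{T}\matX\bh\le0$ for $j\notin\Sstar$ straight from the KKT identity, and tracking constants so the factors $2\xi/(\xi+1)$ come out exactly as stated; the remaining parts are bookkeeping with weighted mixed norms and a routine Gaussian concentration bound.
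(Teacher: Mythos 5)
Your proposal is correct and follows essentially the same route as the paper's proof: KKT conditions giving the uniform bound $\omega_j^{-1}\|\matX_{G_j}^T\matX\bh\|_2\le 2\xi n/(\xi+1)$ on $\calE$, membership of $\bh$ in the sign-restricted cone $\scrC^{(G)}_{-}(\xi,\bomega,S^*)$, direct application of the $\SCIF$ definitions, and a chi-squared concentration plus union bound for $\bbP(\calE)$. The only (immaterial) difference is that you obtain the cone inequality from the basic inequality comparing objective values, whereas the paper gets it by summing the per-group KKT inequalities and using $\|\matX\bh\|_2^2\ge 0$; these are equivalent computations.
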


%\medskip\noindent
Theorem \ref{th:betal12-ns} asserts that the prediction loss 
$\norm{\matX\hbbeta-\matX\bbeta^{*}}{2}^2/n$, the $\ell_2$ loss 
$\|\hbbeta-\bbeta^*\|_2^2$ and the mixed norm loss 
$\sum^{M}_{j=1}\omega_{j}\norm{\hbbetagj-\bbeta_{\Gj}^{*}}{2}$ 
are all of the order 
\bes
\hbox{$\sum_{j\in S^*}$}\omega_j^2 \asymp (s+g\log M)/n
\ees 
when the SCIF can be treated as constant and $\max_j\|\matX_{G_j}/\sqrt{n}\|_S=O_{\bbP}(1)$.  
This result illustrates the benefit of the group Lasso as compared to Lasso. 
The results in Theorem \ref{th:betal12-ns} are not entirely new. 
In fact, for the group Lasso problem (\ref{eq:grplsoopt-ns}), the same convergence rate 
can be derived from the $\ell_{2}$ consistency result in \cite{HZ10}. 
While the result of \cite{HZ10} is derived under a sparse eigenvalue condition 
on the design matrix $\matX$, our results are based on the weaker sign-restricted cone invertibility 
condition and cover the weighted $\ell_{2,q}$ loss for $q>2$.  
The proof of Theorem \ref{th:betal12-ns} is relegated to the \nameref{sec:appendix}. 

\subsection{A scaled group Lasso}\label{subsec:sgrplasso}
In the optimization problem (\ref{eq:grplsoopt-ns}), scale-invariance considerations have not been taken into account. Usually the individual penalty level $\omega_{j}$'s could be chosen proportional to the scale $\sigma$ as a remedy. This issue has been discussed and studied, pertaining to the Lasso problem, in the literature. See  \cite{Huber11}, \cite{SBG10}, \cite{Antoniacomm10}, \cite{SZcomm10}, 
\cite{BelloniCW11}, \cite{Sun2012}, \cite{SunZ13} and many more. 
For the group Lasso problems, this issue has been tackled via the square-root group Lasso formulation in \cite{BLS14}. Here we follow the the prescription from  \cite{Antoniacomm10} and define an optimization problem,
\begin{align}\label{eq:grplsoopt-s}
& (\hbbeta,\hsigma) = \argmin_{\bbeta,\sigma}\ \calL_{\bomega}(\bbeta,\sigma), \\
& \text{where }\ \calL_{\bomega}(\bbeta,\sigma)  = \dfrac{\norm{\by-\matX\bbeta}{2}^{2}}{2n\sigma} + \dfrac{(1-a)\sigma}{2} + \sum^{M}_{j=1}\omega_{j}\norm{\bbeta_{G_j}}{2}.\label{eq:grplsoopt-s2}
\end{align}
Following \cite{SZcomm10} we define an iterative algorithm for the estimation of 
%and studied further in \cite{Sun2012}, which solves the optimization problem in (\ref{eq:grplsoopt-s}) for 
$\{\bbeta,\sigma\}$, 
%Let $\{\hbbeta^{(k)},\hsigma^{(k)}\}$ be the solution for the $k^{{\rm th}}$ iterate. 
%Let $\bomega$ be the prefixed penalty level.
\begin{align}\label{algo:iter}
\begin{array}{ccl}
\hsigma^{(k+1)} & \leftarrow & \norm{\by-\matX\hbbeta^{(k)}}{2}/ \sqrt{(1-a)n},\\[0.2cm]
\bomega' & \leftarrow & \hsigma^{(k+1)}\bomega,\\[0.2cm]
\hbbeta^{(k+1)} & \leftarrow &  \argmin_{\bbeta} \calL_{\bomega'}(\bbeta),
\end{array}
\end{align}
where $\calL_{\bomega'}(\bbeta)$ was as defined in (\ref{eq:grplsoopt-ns}). 
Due to the convexity of the joint loss function $\calL_{\bomega}(\bbeta,\sigma)$, 
the solution of (\ref{eq:grplsoopt-s}) and the limit of (\ref{algo:iter}) give the same estimator. 
Moreover, if the minimization of $\sigma$ is first taken with the unknown $\bbeta$ 
in (\ref{eq:grplsoopt-s}), the second minimization of $\min_\sigma \calL_{\bomega}(\bbeta,\sigma)$ 
over $\bbeta$ becomes the square-root group Lasso problem of \cite{BLS14} 
when $\omega_j \propto |G_j|^{1/2}$. 
As the aim of this paper is statistical inference of group effects,   
the formulation in (\ref{eq:grplsoopt-s}) explicitly provides a needed estimate of $\sigma$. 
Moreover, we use a different penalty $\omega_j \propto |G_j|^{1/2} + \sqrt{2\log(M/\delta)}$ to 
benefit from group sparsity in the estimation of both $\bbeta$ and $\sigma$ and in prediction as well. 

The constant $a\geq 0$ provides control over the degrees of freedom adjustments. 
For simplicity, we take $a=0$ for all subsequent discussions. It is clear that that with $a=0$ and $\bomega'=\hsigma\bomega$, one has $\hsigma\calL_{\bomega}(\bbeta,\hsigma) = \calL_{\bomega'}(\bbeta) + \hsigma^{2}/2$. The algorithm in (\ref{algo:iter}) suggests a profile optimization approach. The following lemma is similar to Proposition 1 in \cite{Sun2012} and characterizes the solution via partial derivative of the profile objective.

%\begin{remark}\label{rem:complete}
%Before stating the following Lemma \ref{lem:partialdelsig}, we assume that the solution $\hbbeta^{(k)}$ at the $k^{th}$ iteration in the iterative algorithm (\ref{algo:iter}) are \emph{complete}. Solution for the group Lasso problem in (\ref{eq:grplsoopt-ns}) may not be unique for high-dimensional data with $p>n$. In case of non-uniqueness, a solution for (\ref{eq:grplsoopt-ns}) is said to be \emph{complete} if it contains all the groups that may appear in any other solutions. In \cite{Roth08}, the authors describe this property as well as prescribe an active-set algorithm for solving the group Lasso problem and identifying complete solutions. We also refer to  \cite{Vogt10}, \cite{Vogt12}.
%\end{remark}
%
\begin{lemma}\label{lem:partialdelsig}
%Consider the optimization problem (\ref{eq:grplsoopt-s}). Take $a=0$. Following Remark \ref{rem:complete} let the solution $\hbbeta^{(k)}$ at the $k^{th}$ iteration in the iterative algorithm (\ref{algo:iter}) be \emph{complete} for every $k$. Then the algorithm in (\ref{algo:iter}) converges to the minimizer giving $(\hbbeta(\bomega),\hsigma)$. The estimator $\hbbeta(\bomega)$ and $\hsigma$ are scale equivariant in $\by$. 
%Moreover, 
Let $\hbbeta(\bomega)$ denote a solution of the optimization problem in (\ref{eq:grplsoopt-ns}). 
Then, $\hbbeta(\sigma\bomega)$ is a minimizer of $\calL_{\bomega}(\bbeta,\sigma)$ in 
(\ref{eq:grplsoopt-s2}) for given $\sigma$, and the profile loss function 
$\calL_{\bomega}(\hbbeta(\sigma\bomega),\sigma)$ is convex and continuously differentiable in $\sigma$ with 
\begin{align}
\dfrac{\pa}{\pa\sigma} \calL_{\bomega}(\hbbeta(\sigma\bomega),\sigma) = \dfrac{1}{2} -\dfrac{ \norm{\by-\matX\hbbeta(\sigma\bomega)}{2}^{2}}{2n\sigma^{2}}. 
\end{align} 
Moreover, the algorithm in (\ref{algo:iter}) converges to a minimizer $(\hbbeta,\hsigma)$ 
in (\ref{eq:grplsoopt-s}) satisfying $\hbbeta = \hbbeta(\hsigma\bomega)$, and 
the estimator $\hbbeta$ and $\hsigma$ are scale equivariant in $\by$. 
\end{lemma}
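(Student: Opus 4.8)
The plan is to reduce the scaled problem to the non-scaled one in (\ref{eq:grplsoopt-ns}) via the identity already recorded in the text and then invoke standard convex-analysis facts. First I would note that for every $\sigma>0$,
\[
\sigma\,\calL_{\bomega}(\bbeta,\sigma) = \calL_{\sigma\bomega}(\bbeta) + \sigma^{2}/2,
\]
where the right-hand side uses the non-scaled loss of (\ref{eq:grplsoopt-ns}). Since $\sigma^{2}/2$ is free of $\bbeta$ and $\sigma>0$, the set of minimizers of $\bbeta\mapsto\calL_{\bomega}(\bbeta,\sigma)$ equals that of $\bbeta\mapsto\calL_{\sigma\bomega}(\bbeta)$, which by definition contains $\hbbeta(\sigma\bomega)$; this gives the first assertion. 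A minimizer exists because $\calL_{\sigma\bomega}$ is coercive: along any direction $\bv$ with $\matX\bv=\bzero$ and $\bv\neq\bzero$ the penalty already diverges, and otherwise the quadratic term does.

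For the profile, I would first argue joint convexity of $\calL_{\bomega}(\bbeta,\sigma)$ on $\Re^{p}\times(0,\infty)$: the map $(\bv,\sigma)\mapsto\|\bv\|_{2}^{2}/\sigma$ is the perspective of $\bv\mapsto\|\bv\|_{2}^{2}$, hence jointly convex, so its composition with the affine map $\bbeta\mapsto\by-\matX\bbeta$ is jointly convex, and adding the convex terms $\sigma/2$ and $\sum_{j}\omega_{j}\|\bbeta_{G_{j}}\|_{2}$ preserves this. Partial minimization of a jointly convex function is convex, so $\sigma\mapsto\calL_{\bomega}(\hbbeta(\sigma\bomega),\sigma)=\min_{\bbeta}\calL_{\bomega}(\bbeta,\sigma)$ is convex. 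Next I would establish that the fitted vector $\matX\hbbeta(\sigma\bomega)$ is uniquely determined by $\sigma$: if $\hbbeta_{1},\hbbeta_{2}$ both minimize $\calL_{\sigma\bomega}$, the objective is constant on the joining segment, while the quadratic loss is strictly convex along that segment unless $\matX\hbbeta_{1}=\matX\hbbeta_{2}$. Hence $\sigma\mapsto\|\by-\matX\hbbeta(\sigma\bomega)\|_{2}$ is well defined, and since $\calL_{\bomega}(\bbeta,\sigma)$ is smooth in $\sigma$ for each fixed $\bbeta$ with $\partial_{\sigma}\calL_{\bomega}(\bbeta,\sigma)=1/2-\|\by-\matX\bbeta\|_{2}^{2}/(2n\sigma^{2})$ depending on $\bbeta$ only through $\matX\bbeta$, the envelope theorem for convex functions yields differentiability of the profile with the stated derivative. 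A convex function on an open interval that is differentiable everywhere is automatically $C^{1}$, giving continuous differentiability.

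For the algorithm, I would recognize (\ref{algo:iter}) as exact alternating minimization of the jointly convex $\calL_{\bomega}(\bbeta,\sigma)$: with $a=0$, the update $\hsigma^{(k+1)}=\|\by-\matX\hbbeta^{(k)}\|_{2}/\sqrt{n}$ is the exact minimizer of $\sigma\mapsto\calL_{\bomega}(\hbbeta^{(k)},\sigma)$ (the first-order condition in $\sigma$), and $\hbbeta^{(k+1)}=\argmin_{\bbeta}\calL_{\hsigma^{(k+1)}\bomega}(\bbeta)=\hbbeta(\hsigma^{(k+1)}\bomega)$ is the exact minimizer of $\bbeta\mapsto\calL_{\bomega}(\bbeta,\hsigma^{(k+1)})$ by the first step. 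Eliminating $\bbeta$ turns the recursion into the scalar fixed-point iteration $\hsigma^{(k+1)}=T(\hsigma^{(k)})$ with $T(\sigma)=\|\by-\matX\hbbeta(\sigma\bomega)\|_{2}/\sqrt{n}$, whose fixed points are exactly the zeros of the nondecreasing, continuous derivative of the convex profile, i.e. the minimizers $\hsigma$. Since a larger penalty scale $\sigma$ produces a larger residual norm, $T$ is nondecreasing, and combined with convexity of the profile this forces $\{\hsigma^{(k)}\}$ to be monotone and bounded, hence convergent to a fixed point $\hsigma$ — the same monotone-convergence argument as in Proposition~1 of \cite{Sun2012}. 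At the limit, $\hbbeta=\hbbeta(\hsigma\bomega)$ minimizes $\calL_{\bomega}(\cdot,\hsigma)$ and $\partial_{\sigma}\calL_{\bomega}(\hbbeta,\hsigma)=0$, so by joint convexity $(\hbbeta,\hsigma)$ is a global minimizer of (\ref{eq:grplsoopt-s}). Scale equivariance then follows from the homogeneity $\calL_{\bomega}(c\bbeta,c\sigma)=c\,\calL_{\bomega}(\bbeta,\sigma)$ when $\by$ is replaced by $c\by$, $c>0$, which maps minimizers to $c$ times minimizers. I expect the algorithm-convergence step to be the main obstacle: verifying that $T$ is nondecreasing (monotonicity of the residual norm in the penalty scale) and running the monotone fixed-point argument carefully, including the degenerate case of a whole interval of minimizers of the convex profile; the differentiability and fitted-value-uniqueness parts are comparatively routine once the perspective/joint-convexity structure is in place.
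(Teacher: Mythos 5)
Your proposal is correct, and it reaches the conclusion by a genuinely different route from the paper on the key step. The paper proves the derivative formula by smoothing: it introduces $\calL_{\bomega}(\bbeta,\sigma,\eta)$ with penalty terms $\|\bbeta_{G_j}\|_2^{1+\eta}$ and an extra $\eta\sigma^2/2$, differentiates the smoothed profile via the chain rule (the first-order condition kills the $\partial/\partial\bbeta$ term), lets $\eta\to 0+$ using strict convexity in $\matX\bbeta$ to pass to the limit in the fitted value, and then integrates to identify $\frac{\pa}{\pa\sigma}\calL_{\bomega}(\hbbeta(\sigma\bomega),\sigma)$; the remaining claims are dispatched with a one-line appeal to joint convexity. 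You instead work directly in convex analysis: joint convexity via the perspective of $\|\cdot\|_2^2$, the subdifferential characterization of the partially minimized function, and the observation that $\partial_\sigma\calL_{\bomega}(\bbeta,\sigma)$ depends on $\bbeta$ only through the (unique) fitted value $\matX\hbbeta(\sigma\bomega)$ — which makes the subdifferential of the profile a singleton and hence the convex profile $C^1$. Both arguments ultimately rest on the same two structural facts (uniqueness of $\matX\hbbeta(\sigma\bomega)$ and dependence of $\partial_\sigma\calL_{\bomega}$ on $\bbeta$ only through $\matX\bbeta$); the paper's smoothing avoids subdifferential calculus at the cost of an interchange-of-limits step and an informally justified differentiability of the smoothed minimizer in $\sigma$, while your route stays within standard convex analysis. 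You also spell out the algorithm-convergence step in more detail than the paper does, correctly identifying (\ref{algo:iter}) as exact alternating minimization and reducing it to a monotone scalar fixed-point iteration $\hsigma^{(k+1)}=T(\hsigma^{(k)})$ with $T$ nondecreasing (the residual norm is nondecreasing in the penalty level, by the standard two-inequality comparison of minimizers at two penalty levels); this is essentially the argument of Proposition 1 of \cite{Sun2012} that the paper cites and then leaves implicit.
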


\noindent The proof of Lemma \ref{lem:partialdelsig} is relegated to the \nameref{sec:appendix}. We now present the consistency theorem which extends 
Theorem \ref{th:betal12-ns} by providing convergence results for the estimate of scale.  Define 
\bes
\mu(\bomega, \xi) =  \frac{2\xi \sum_{j \in \Sstar}\omega^{2}_{j}}{\SCIF^{(G)}_1(\xi, \bomega,\Sstar,\Sstar)},\quad  
\tau_- = \frac{2\mu(\bomega,\xi)(\xi-1)}{\xi+1},\quad 
\tau_+ = \frac{\tau_-}{2} + \mu(\bomega,\xi). 
\ees
Let $m_{d,n}$ be the median of the beta$(d/2,n/2-d/2)$ distribution and define 
\bes
\omega_{*,j} \ge \sqrt{m_{d_j,n}} + \sqrt{\frac{2\log(M/\delta)}{(n\vee 2)-3/2}},\ 
A_* = \frac{(\xi+1)/(\xi-1)}{\sqrt{\{1- 2\mu(\bomega_*,\xi)(\xi+1)/(\xi-1)\}_+}}, 
\ees
where $\bomega_*$ is the vector with elements $\omega_{*,j}$ and $d_j=|G_j|$. We will show that 
$ \sqrt{m_{d_j,n}} \le  (d_j/n)^{1/2}+n^{-1/2}$ in the proof of the following theorem.  

%%%%%%%%%
%%%%%%%%%
\begin{theorem}\label{th:betasgrp}
Let $\{\hbbeta,\hsigma\}$ be a solution of the optimization problem (\ref{eq:grplsoopt-s2}) 
with data $(\matX,\by)$ and  
$\bbeta^*$ be a vector with $\supp(\bbetastar) \subset G_{\Sstar}$ for some 
$\Sstar \subset T^*=\{1,\cdots, M\}$. 
Let $\xi>1$. \\
(i) Suppose  $\SCIF^{(G)}_1(\xi,\bomega,\Sstar,\Sstar) >0$ in (\ref{eq:SCIF}) and $\tau_+<1$. 
Define the following event
\begin{align}\label{calE-s}
\calE = \left\{\max_{1\le j\le M}
\frac{\norm{\matXgj^{T}(\by-\matX\bbetastar)}{2}}{\omega_{j} n\sigstar/\sqrt{1+\tau_-}} 
< \dfrac{\xi-1}{\xi+1}\right\},
\end{align}
where $\sigstar=\norm{\by-\matX\bbetastar}{2}/\sqrt{n}$ is the oracle noise level. 
Then in the event $\calE$, we have
\bel{sgl-sigma}
\frac{\sigstar}{\sqrt{1+\tau_-}}\le \hsigma \le \frac{\sigstar}{\sqrt{1-\tau_+}},
\eel
\bel{sgl-pred}
\norm{\matX\hbbeta-\matX\bbeta^{*}}{2}^2/n 
\le \frac{(\sigstar)^2\{2\xi/(\xi+1)\}^2\sum_{j \in \Sstar}\omega^{2}_{j}}
{(1-\tau_+)\SCIF^{(G)}_1(\xi, \bomega,\Sstar, \Sstar)},  
\eel
and for all $q\ge 1$
\begin{align}\label{sgl-est}
\bigg\{\sum^{M}_{j=1}\omega_{j}^2\bigg(\frac{\norm{\hbbetagj-\bbeta_{\Gj}^{*}}{2}}
{\omega_{j}}\bigg)^q\bigg\}^{1/q}
\leq \dfrac{\sigstar\{2\xi/(\xi+1)\}\big(\sum_{j \in \Sstar}\omega^{2}_{j}\big)^{1/q}}
{\sqrt{1-\tau_+}\SCIF^{(G)}_q(\xi, \bomega, \Sstar,T^*)}. 
\end{align}
(ii) Suppose the regression model in (\ref{eq:mod1}) holds 
with Gaussian error, $\by-\matX\bbeta^*\sim \sfN_{n}(\bzero,\sigma^{2}\matI_{n}\,)$.  
Suppose $\omega_{j} \ge A\|\matX_{G_j}/\sqrt{n}\|_{S}\omega_{*,j} $ with $A\ge A_*$. Then, 
\bel{th-6-5}
\bbP(\calE) \geq 1-\delta
\eel
with the event $\calE$ in (\ref{calE-s}). 
%and a design matrix satisfying $\max_{j\le M}\|\matX_{G_j}/\sqrt{n}\|_{S}\le 1$. 
Moreover, if $\sqrt{n}\mu(\bomega,\xi) \rightarrow 0$, then
\begin{align}\label{eq:teststatsigma}
\sqrt{n}\left({\hsigma}/{\sigma} - 1\right) \toD \sfN(0,1/2).
\end{align}
\end{theorem}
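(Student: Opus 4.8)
The plan is to combine the deterministic two-sided sandwich bound on $\hsigma/\sigstar$ from part~(i) with a classical central limit theorem for the oracle noise estimate $\sigstar=\norm{\by-\matX\bbetastar}{2}/\sqrt{n}=\norm{\bepsa}{2}/\sqrt{n}$, and then conclude via Slutsky's theorem. First, since $\bepsa\sim\sfN_n(\bzero,\sigma^2\matI_n)$, the quantity $n(\sigstar)^2/\sigma^2=\norm{\bepsa}{2}^2/\sigma^2$ is a sum of $n$ i.i.d.\ $\chi^2_1$ variables, each with mean $1$ and variance $2$, so the ordinary CLT gives $\sqrt{n}\big((\sigstar)^2/\sigma^2-1\big)\toD\sfN(0,2)$. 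Applying the delta method to $x\mapsto\sqrt{x}$, whose derivative at $x=1$ equals $1/2$, yields
\bel{sigstar-clt}
\sqrt{n}\left(\frac{\sigstar}{\sigma}-1\right)\toD\sfN(0,1/2),
\eel
and in particular $\sigstar/\sigma\convp 1$.

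Second, I would show that $\hsigma$ and $\sigstar$ are indistinguishable at the $\sqrt{n}$ scale. On the event $\calE$ of part~(i), the bound (\ref{sgl-sigma}) gives $(1+\tau_-)^{-1/2}\le\hsigma/\sigstar\le(1-\tau_+)^{-1/2}$, and using $(1-x)^{-1/2}-1\le x$ and $1-(1+x)^{-1/2}\le x/2$ for small $x\ge 0$ this implies $|\hsigma/\sigstar-1|\le\max\{\tau_-,\tau_+\}$. By the definitions of $\tau_-$ and $\tau_+$, both are bounded by a fixed multiple of $\mu(\bomega,\xi)$ (recall $\xi$ is a fixed constant), so the hypothesis $\sqrt{n}\,\mu(\bomega,\xi)\to 0$ gives $\sqrt{n}\,|\hsigma/\sigstar-1|\to 0$ on $\calE$. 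Taking the confidence level $\delta=\delta_n\to 0$ slowly enough that $\sqrt{n}\,\mu(\bomega,\xi)\to 0$ is preserved (the penalty $\omega_{*,j}$, and hence $\mu(\bomega,\xi)$, grows only like $\sqrt{\log(M/\delta_n)}$, which can be absorbed), part~(ii) gives $\bbP(\calE)\to 1$ via (\ref{th-6-5}), whence $\sqrt{n}\big(\hsigma/\sigstar-1\big)=o_{\bbP}(1)$.

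Finally, I would write
\bes
\sqrt{n}\left(\frac{\hsigma}{\sigma}-1\right)
=\sqrt{n}\left(\frac{\hsigma}{\sigstar}-1\right)\frac{\sigstar}{\sigma}
+\sqrt{n}\left(\frac{\sigstar}{\sigma}-1\right);
\ees
the first term is $o_{\bbP}(1)\cdot(1+o_{\bbP}(1))=o_{\bbP}(1)$ since $\sigstar/\sigma\convp 1$, and the second converges in distribution to $\sfN(0,1/2)$ by (\ref{sigstar-clt}), so Slutsky's theorem gives (\ref{eq:teststatsigma}). I expect the only delicate point to be the bookkeeping around $\delta$: one must check that the slowly vanishing confidence level needed to upgrade $\bbP(\calE)\ge 1-\delta$ to $\bbP(\calE)\to 1$ does not inflate $\mu(\bomega,\xi)$ enough to violate $\sqrt{n}\,\mu(\bomega,\xi)\to 0$; everything else is a routine CLT-and-Slutsky argument, with the $\chi^2_n$ computation supplying the exact variance $1/2$.
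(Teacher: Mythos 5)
Your argument establishes only the final display (\ref{eq:teststatsigma}), and for that claim it follows essentially the same route as the paper: the sandwich bound (\ref{sgl-sigma}) gives $|\hsigma/\sigstar-1|\lesssim\mu(\bomega,\xi)=o(n^{-1/2})$ on $\calE$, the exact $\chi_n/\sqrt{n}$ computation (delta method on the $\chi^2_n$ CLT) gives $\sqrt{n}(\sigstar/\sigma-1)\toD\sfN(0,1/2)$, and Slutsky finishes. Your attention to the bookkeeping of $\delta$ (letting $\delta=\delta_n\to 0$ slowly so that the $\sqrt{\log(M/\delta_n)}$ inflation of $\omega_{*,j}$ does not destroy $\sqrt{n}\,\mu(\bomega,\xi)\to0$) is in fact more explicit than the paper's treatment of the same point.

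The genuine gap is that you take as given everything you are asked to prove before that last line. The sandwich (\ref{sgl-sigma}), the error bounds (\ref{sgl-pred})--(\ref{sgl-est}), and the probability bound (\ref{th-6-5}) are the substance of the theorem, and none of them is routine. For (\ref{sgl-sigma}) the paper's argument is: fix a trial scale $t$, observe that $\hbbeta(t\bomega)/t$ is the unpenalized-scale group Lasso for the rescaled data $\by/t$, invoke Theorem \ref{th:betal12-ns} to bound $\sum_j t\omega_j\|\hbbetagj(t\bomega)-\bbetagjstar\|_2\le t^2\mu(\bomega,\xi)$ on $\calE$, plug this together with the KKT conditions into the identity $(\sigstar)^2-\|\by-\matX\hbbeta(t\bomega)\|_2^2/n=(\matX\bh)^T(2\bepsa-\matX\bh)/n$ to trap $\|\by-\matX\hbbeta(t\bomega)\|_2^2/n$ between $(\sigstar)^2-\tau_-t^2$ and $(\sigstar)^2+\tau_+t^2$, and then use the sign of the profile-loss derivative from Lemma \ref{lem:partialdelsig} at $t=\sigstar/\sqrt{1+\tau_-}$ and $t=\sigstar/\sqrt{1-\tau_+}$ to locate $\hsigma$. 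Without this step there is no sandwich to feed into your Slutsky argument. Likewise, (\ref{th-6-5}) is not a plain Gaussian tail bound: because $\calE$ normalizes by the random $\sigstar$ rather than by $\sigma$, the relevant quantity is $\|\matQ_{G_j}\bu^*\|_2$ with $\bu^*=\bepsa/\|\bepsa\|_2$ uniform on the sphere, and the paper controls it via the L\'evy concentration inequality for Lipschitz functions on $\bbS^{n-1}$; one must also resolve the circularity in the definition of $A_*$ (the threshold $\tau_-$ depends on $\bomega$, hence on $A$), which is where the condition $A\ge A_*$ with $A_*=\{(\xi+1)/(\xi-1)\}/\{1-((\xi+1)/(\xi-1))^2\tau_*\}_+^{1/2}$ comes from. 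As written, your proposal is a correct proof of the last sentence of part (ii) conditional on the rest of the theorem, not a proof of the theorem.
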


%%%%%%%%%%%%%%%%%%%%%% COROLLARY %%%%%%%%%%%%%%%%%%%%
%In Theorem \ref{th:betal12-ns}, \ref{th:betasgrp}, no assumption on the sparisity pattern of $\bbetastar$ is 
% imposed. In the following we show the faster convergence rate in case 
%of a strong group sparse signal $\bbetastar$. 

Theorem \ref{th:betasgrp}, whose proof is again relegated to the \nameref{sec:appendix}, 
provides explicit rates and constants for mixed $\ell_{q}$ norm estimation of $\bbetastar$ and estimation of scale parameter $\sigma$. 
When $\omega_j\asymp \omega_{*,j}$ %$\max_{j\le M}\|\matX_{G_j}/\sqrt{n}\|_{S}\lesssim 1$, 
and $\SCIF^{(G)}_1(\xi, \bomega, \Sstar)\asymp 1$, we have 
\bes
\hbox{$\sum_{j\in T}$}\omega_j^2\asymp \mu(\bomega,\xi)\asymp \big\{s+g\log(M/\delta)\big\}\big/n. 
\ees
It also establishes the veracity of the working assumption in (\ref{eq:prelimbetaconsis}). 
The following Corollary~\ref{cor:grpsparse-ns} provides a more succinct summary to make clear the connection of Theorem \ref{th:betasgrp} to (\ref{eq:prelimbetaconsis}). 

\begin{corollary}[Verification of working assumption for deterministic designs] 
\label{cor:grpsparse-ns} 
Let $\{\hbbeta,\hsigma\}$ be as in (\ref{eq:grplsoopt-s2}) with a penalty 
level satisfying $\omega_{j}/A^* \le \|\matX_{G_j}/\sqrt{n}\|_{S}\omega_{*,j} \le \omega_j/A_*$. 
Suppose the design matrix $\matX$ satisfy the condition 
$\|\matX_{G_j}/\sqrt{n}\|_S^2\le c^*$ and that the sign-restricted cone invertibility condition holds 
in the sense of $\SCIF^{(G)}_q(\xi, \bomega, \Sstar,\Sstar)>c_*$ for some fixed $c_*>0$. 
Suppose $\by-\matX\bbeta^*\sim \sfN_{n}(\bzero,\sigma^{2}\matI_{n}\,)$ and 
$\supp(\bbeta^*)\subseteq G_{S^*}$ with $|G_{\Sstar}|+|\Sstar|\log(M/\delta)\le a_0n$. 
%Let $0<\delta <1$ be a fixed small constant and take 
%\[\omega_{j} = A\left\{\sqrt{d_{j}/n} + \sqrt{(2/n)\log(M/\delta)}\right\} \text{ with a constant } 
%A > (\xi+1)/(\xi-1). \] 
Then, for certain constants $\{a_*,C\}$ depending on $\{c_*,c^*,\xi, A^*\}$ only, 
\bel{cor-1-1}
&& \max\bigg\{\Big|1-\dfrac{\hsigma}{\sigstar}\Big|\,,\,
\frac{\|\matX\hbbeta-\matX\bbeta^*\|_2^2}{n\sigma^2}\,,\,
%\frac{\|\hbbeta-\bbeta^*\|_2^2}{\sigma^2}, 
\sum^{M}_{j=1}\frac{\norm{\hbbetagj-\bbeta_{\Gj}^{*}}{2}}{\sigma/\omega_{j}}\,,\,
\sum^{M}_{j=1}\frac{\norm{\matX_{G_j}(\hbbetagj-\bbeta_{\Gj}^{*})}{2}}{n^{1/2}\sigma/\omega_{j}}\bigg\} 
\cr && \leq C\left\{|G_{\Sstar}|+|\Sstar|\log(M/\delta)\right\}\big/n
\eel
with probability at least $1-\delta$ whenever $a_0\le a_*$. 
\end{corollary}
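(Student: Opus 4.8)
The plan is to derive (\ref{cor-1-1}) directly from Theorem \ref{th:betasgrp}, which is already available, so the whole proof is really a matter of checking hypotheses and then translating its conclusions. First I would verify the hypotheses of Theorem \ref{th:betasgrp}. The requirement $\omega_{j}\ge A\,\|\matX_{G_j}/\sqrt n\|_S\,\omega_{*,j}$ with $A\ge A_*$ is immediate: the assumed bound $\|\matX_{G_j}/\sqrt n\|_S\,\omega_{*,j}\le \omega_j/A_*$ is exactly $\omega_j\ge A_*\|\matX_{G_j}/\sqrt n\|_S\,\omega_{*,j}$, so part (ii) applies with $A=A_*$ and yields $\bbP(\calE)\ge 1-\delta$ for the event $\calE$ in (\ref{calE-s}). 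The positivity $\SCIF^{(G)}_1(\xi,\bomega,S^*,S^*)>0$ is part of the standing assumption $\SCIF^{(G)}_q(\xi,\bomega,S^*,S^*)>c_*$. The remaining hypothesis, $\tau_+<1$, is the one that needs the smallness of $a_*$.

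The key quantitative step is to control $\sum_{j\in S^*}\omega_j^2$. Taking $\omega_{*,j}$ of the order $(d_j/n)^{1/2}+(\log(M/\delta)/n)^{1/2}$ as in Theorem \ref{th:betasgrp} and using $\sqrt{m_{d_j,n}}\le (d_j/n)^{1/2}+n^{-1/2}$ (established inside that theorem's proof), we get $\omega_{*,j}^2\lesssim (d_j+\log(M/\delta))/n$; combined with $\omega_j\le A^*\|\matX_{G_j}/\sqrt n\|_S\,\omega_{*,j}\le A^*\sqrt{c^*}\,\omega_{*,j}$ this gives
\[
\sum_{j\in S^*}\omega_j^2 \;\le\; (A^*)^2 c^* \sum_{j\in S^*}\omega_{*,j}^2 \;\le\; C_1 (A^*)^2 c^*\,\frac{|G_{S^*}|+|S^*|\log(M/\delta)}{n}\;\le\; C_1 (A^*)^2 c^*\, a_0
\]
for a numerical constant $C_1$. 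Since $\SCIF^{(G)}_1(\xi,\bomega,S^*,S^*)>c_*$, this bounds $\mu(\bomega,\xi)=2\xi\sum_{j\in S^*}\omega_j^2/\SCIF^{(G)}_1(\xi,\bomega,S^*,S^*)\le (2\xi/c_*)C_1(A^*)^2 c^* a_0$, and hence $\tau_-$ and $\tau_+$ are at most $2\mu(\bomega,\xi)\le C_2 a_0$ with $C_2$ depending only on $\{c_*,c^*,\xi,A^*\}$. Choosing $a_*\le 1/(4C_2)$ forces $\tau_+\le 1/2$, and the same smallness makes all the $\tau$-dependent prefactors in (\ref{sgl-sigma})--(\ref{sgl-est}) bounded by absolute constants. (The implicit requirement $A_*<\infty$, i.e. $\mu(\bomega_*,\xi)<(\xi-1)/(2(\xi+1))$, is handled the same way once the SCIF lower bound is passed to the weights $\bomega_*$, which are comparable to $\bomega$ up to the factors $\|\matX_{G_j}/\sqrt n\|_S\le\sqrt{c^*}$.)

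On the event $\calE$, Theorem \ref{th:betasgrp}(i) then delivers $|1-\hsigma/\sigstar|\le\tau_+\le 2\mu(\bomega,\xi)$, together with the prediction bound (\ref{sgl-pred}) and the $q=1$ case of the estimation bound (\ref{sgl-est}), all carrying the common factor $\sum_{j\in S^*}\omega_j^2\lesssim (|G_{S^*}|+|S^*|\log(M/\delta))/n$. The first quantity in (\ref{cor-1-1}) is already of this form. For the remaining three I would convert the $\sigstar$-normalization to $\sigma$-normalization: since $n(\sigstar/\sigma)^2\sim\chi^2_n$, a Chernoff bound gives $\bbP\{(\sigstar/\sigma)^2\notin[1/2,2]\}\le\delta$ whenever $n\gtrsim\log(1/\delta)$, which holds because $\log(1/\delta)\le\log(M/\delta)\le a_0 n$; intersecting this with $\calE$ and then replacing $\delta$ by $\delta/2$ throughout (which only inflates constants) keeps the probability at $\ge 1-\delta$. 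On this intersection, $(\sigstar)^2/\sigma^2$ is pinned between absolute constants, so (\ref{sgl-pred}) and (\ref{sgl-est}) yield $\|\matX\hbbeta-\matX\bbeta^*\|_2^2/(n\sigma^2)\lesssim \sum_{j\in S^*}\omega_j^2/c_*$ and $\sum_j \omega_j\|\hbbeta_{G_j}-\bbeta^*_{G_j}\|_2/\sigma\lesssim \sum_{j\in S^*}\omega_j^2/c_*$; the fourth quantity follows from $\|\matX_{G_j}(\hbbeta_{G_j}-\bbeta^*_{G_j})\|_2\le\|\matX_{G_j}\|_S\|\hbbeta_{G_j}-\bbeta^*_{G_j}\|_2\le\sqrt{c^* n}\,\|\hbbeta_{G_j}-\bbeta^*_{G_j}\|_2$, so it is at most $\sqrt{c^*}$ times the third. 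Taking $C$ to be the largest of the resulting constants gives (\ref{cor-1-1}).

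I do not expect a genuine obstacle here; the only points needing care are bookkeeping ones. First, one must check that a single $a_*$, depending only on $\{c_*,c^*,\xi,A^*\}$, simultaneously makes $\tau_+$ bounded away from $1$, keeps $A_*$ finite, and makes the $\chi^2_n$ concentration valid at level $\delta$. Second, the failure-probability accounting: the event $\calE$ and the $\chi^2_n$ concentration event each cost $\delta$, and are merged by the usual rescaling, which is absorbed into the constant $C$. Beyond Theorem \ref{th:betasgrp} and elementary $\chi^2_n$ tail bounds, no new input is required.
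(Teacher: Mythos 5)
Your proposal is correct and follows exactly the route the paper intends: the corollary is presented there without a separate proof, as a direct restatement of Theorem \ref{th:betasgrp} once one notes that the stated penalty bounds give $\sum_{j\in S^*}\omega_j^2\lesssim\{|G_{S^*}|+|S^*|\log(M/\delta)\}/n$, that the SCIF lower bound $c_*$ and the smallness of $a_0$ keep $\mu(\bomega,\xi)$, $\tau_\pm$ and $A_*$ under control, and that $\sigma^*$ can be traded for $\sigma$ via $\chi^2_n$ concentration. Your bookkeeping (the choice of a single $a_*$, the $\sqrt{c^*}$ factor for the mixed prediction term, and the $\delta/2$ union-bound adjustment) supplies precisely the details the paper leaves implicit.
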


Corollary \ref{cor:grpsparse-ns} touches upon the mixed prediction loss 
$\sum^{M}_{j=1}\omega_{j}\norm{\matX_{G_j}\hbbetagj-\matX_{G_j}\bbeta_{\Gj}^{*}}{2}$
the first time in this section. The reason for this omission is two fold. Firstly, 
\bes
\bigg\{\sum^{M}_{j=1}\omega_{j}^2\bigg(\frac{\norm{\matX_{G_j}(\hbbetagj-\bbeta_{\Gj}^{*})}{2}}
{n^{1/2}\omega_{j}}\bigg)^q\bigg\}^{1/q}
\le \max_{j\le M}\left\|\frac{\matX_{G_j}}{\sqrt{n}}\right\|_S 
\bigg\{\sum^{M}_{j=1}\omega_{j}^2\bigg(\frac{\norm{\hbbetagj-\bbeta_{\Gj}^{*}}{2}}
{\omega_{j}}\bigg)^q\bigg\}^{1/q}
\ees
so that (\ref{eq:betal12-ns}) and (\ref{sgl-est}) automatically generate the corresponding bounds 
for the mixed prediction error under the respective conditions. Secondly, upper bounds for the 
mixed prediction loss can be obtained by reparametrization within the given group structure as 
in the following corollary. 

\begin{corollary} \label{cor-2} 
Let $\matX_{G_j} = \matU_{G_j}\bLambda_{G_j}\matV_{G_j}^T$ be the SVD of 
$\matX_{G_j}$ with $\bLambda_{G_j}\in\bbR^{|G_j|\times |G_j|}$. 
Define $\bb$ by $\bb_{G_j}=\bLambda_{G_j}\matV_{G_j}^T\bbeta_{G_j}$ 
and $\matU$ by $\matU\bb = \sum_{j=1}^M\matU_{G_j}\bb_{G_j}$. 
Then, 
\bes
\bigg\{\sum^{M}_{j=1}\omega_{j}^2\bigg(\frac{\norm{\matX_{G_j}\hbbetagj-\matX_{G_j}\bbeta_{\Gj}^{*}}{2}}
{\omega_{j}}\bigg)^q\bigg\}^{1/q}
&=& \bigg\{\sum^{M}_{j=1}\omega_{j}^2\bigg(\frac{\norm{\hbb_{G_j}-\bb_{\Gj}^{*}}{2}}
{\omega_{j}}\bigg)^q\bigg\}^{1/q}
\cr  &\leq& \dfrac{2\sigstar \xi\big(\sum_{j \in \Sstar}\omega^{2}_{j}\big)^{1/q}}
{\sqrt{1-\tau_+}\SCIF^{(G)}_q(\xi, \bomega, \Sstar, \Sstar)}
\ees
for all $q\ge 1$ when the conditions for (\ref{sgl-est}), including the definition of the estimator and the SCIF, 
hold with $\matX$, $\bbeta$ and $\bbeta^*$ replaced by $\matU$, 
$\bb$ and $\bb^*$ respectively. 
\end{corollary}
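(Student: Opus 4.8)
The plan is to prove Corollary~\ref{cor-2} by a within-group change of variables that turns the group-mixed \emph{prediction} error for the original design $\matX$ into the group-mixed \emph{estimation} error for a re-normalized design $\matU$, after which the bound is just (\ref{sgl-est}) of Theorem~\ref{th:betasgrp} applied to the latter. First I would record the basic invariances of the block-wise map $\bbeta_{G_j}\mapsto\bb_{G_j}=\bLambda_{G_j}\matV_{G_j}^T\bbeta_{G_j}$ coming from the SVDs $\matX_{G_j}=\matU_{G_j}\bLambda_{G_j}\matV_{G_j}^T$ of the statement: (a) $\matX_{G_j}\bbeta_{G_j}=\matU_{G_j}\bb_{G_j}$ for every block, hence $\matX\bbeta=\matU\bb$ and $\by-\matU\bb=\by-\matX\bbeta$, so fitted values, residuals, and in particular $\sigstar=\norm{\by-\matX\bbeta^*}{2}/\sqrt n$ are unchanged; (b) since $\matU_{G_j}$ has orthonormal columns, $\norm{\matX_{G_j}\bbeta_{G_j}}{2}=\norm{\bb_{G_j}}{2}$, so the prediction-weighted mixed norm $\sum_j\omega_j\norm{\matX_{G_j}\bbeta_{G_j}}{2}$ equals the coefficient-weighted mixed norm $\sum_j\omega_j\norm{\bb_{G_j}}{2}$; (c) the group support is preserved, $\supp(\bbeta^*)\subseteq G_{\Sstar}\iff\supp(\bb^*)\subseteq G_{\Sstar}$. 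By (a) and (b) the scaled group Lasso objective (\ref{eq:grplsoopt-s2}) for $(\matU,\by)$ with penalty $\sum_j\omega_j\norm{\bb_{G_j}}{2}$ is the exact image under this map of the prediction-penalized scaled group Lasso for $(\matX,\by)$, so that identifying $\hbbeta$ with the preimage of the solution $\hbb$, i.e. $\matX_{G_j}\hbbetagj=\matU_{G_j}\hbb_{G_j}$, is precisely what makes the corollary hold as stated.

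Given this, the first equality in the corollary is immediate term by term, using (a) and the orthonormality of the columns of $\matU_{G_j}$: $\norm{\matX_{G_j}\hbbetagj-\matX_{G_j}\bbetagjstar}{2}=\norm{\matX_{G_j}(\hbbetagj-\bbetagjstar)}{2}=\norm{\matU_{G_j}(\hbb_{G_j}-\bb^*_{G_j})}{2}=\norm{\hbb_{G_j}-\bb^*_{G_j}}{2}$; raising to the $q$-th power, weighting by $\omega_j^{2-q}$ and summing over $j$ gives the claimed identity of the two mixed norms. For the inequality I would apply Theorem~\ref{th:betasgrp}(i), and specifically (\ref{sgl-est}), to the triple $(\matU,\bb,\bb^*)$ --- legitimate because, by hypothesis, the conditions behind (\ref{sgl-est}) (the sign-restricted cone invertibility $\SCIF^{(G)}_1(\xi,\bomega,\Sstar,\Sstar)>0$, $\tau_+<1$, Gaussian errors, and membership in the event $\calE$, all with the SCIF and estimator defined for the $\matU$-problem) are assumed to hold with $\matX,\bbeta,\bbeta^*$ replaced by $\matU,\bb,\bb^*$. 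Bound (\ref{sgl-est}) then controls $\{\sum_j\omega_j^2(\norm{\hbb_{G_j}-\bb^*_{G_j}}{2}/\omega_j)^q\}^{1/q}$ by $\sigstar\{2\xi/(\xi+1)\}(\sum_{j\in\Sstar}\omega_j^2)^{1/q}/\{\sqrt{1-\tau_+}\,\SCIF^{(G)}_q\}$ for the $\matU$-design, and the right-hand side stated in the corollary (numerator constant $2\sigstar\xi$, invertibility factor $\SCIF^{(G)}_q(\xi,\bomega,\Sstar,\Sstar)$) then follows by the same routine bookkeeping used inside the proof of Theorem~\ref{th:betasgrp} --- splitting the mixed $\ell_{2,q}$ norm at $\Sstar$, using the cone inequality to absorb the off-support part, and bounding the restricted invertibility factor --- which I would not reproduce.

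The main difficulty is not the main line of argument but two technical points. First, rank-deficient blocks $\matX_{G_j}$: then $\bLambda_{G_j}$ is singular and the block-wise map is a bijection only onto the row space of $\matX_{G_j}$, so one must check that $\hbb_{G_j}$ and $\bb^*_{G_j}$ lie in that subspace (they do, being of the form $\bLambda_{G_j}\matV_{G_j}^T(\cdot)$) and use the Moore-Penrose inverse consistently in the inverse map $\bb_{G_j}\mapsto\matV_{G_j}\bLambda_{G_j}^{\dag}\bb_{G_j}$. Second, making the covariance of the scaled group Lasso under the map precise: it is the prediction-penalized version, not (\ref{eq:grplsoopt-s2}) with the coefficient penalty, that transforms covariantly, which is exactly why the corollary must be read with $\hbbeta$ defined through $\hbb$ and with the hypotheses imposed on $(\matU,\bb,\bb^*)$ rather than on $(\matX,\bbeta,\bbeta^*)$. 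Once these are handled, the statement is a direct consequence of Theorem~\ref{th:betasgrp}(i).
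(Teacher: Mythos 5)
Your proposal is correct and follows exactly the route the paper intends: the paper gives no separate proof of Corollary~\ref{cor-2}, treating it as an immediate consequence of applying (\ref{sgl-est}) to the reparametrized triple $(\matU,\bb,\bb^*)$, with the first equality coming from the orthonormality of the columns of each $\matU_{G_j}$ — precisely your argument, including your (correct) observations about rank-deficient blocks and the fact that the hypotheses must be imposed on the $\matU$-problem. The only loose end you wave at as ``routine bookkeeping'' — reconciling the corollary's displayed constant $2\sigstar\xi$ and last SCIF argument $\Sstar$ with the $\sigstar\{2\xi/(\xi+1)\}$ and $T^*$ appearing in (\ref{sgl-est}) — is a discrepancy in the paper's own statement rather than a gap in your argument.
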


\begin{remark}
Corollary \ref{cor:grpsparse-ns} can be viewed as a scaled version of the main results of 
\cite{HZ10} although here the regularity condition of the design is of a weaker form 
and smaller penalty levels are allowed.  
\end{remark}

\subsection{Random designs}\label{sub-3-3}
In this subsection, we verify the working assumption for sub-Gaussian designs 
by checking the groupwise cone invertibility condition. 
Our analysis also provides lower bounds for the groupwise restricted eigenvalue 
and compatibility constant. 
We first state in the following theorem the main result for random designs. 

\begin{theorem}[Verification of working assumption for random designs] 
\label{th-7} 
Let $0<c_*\le c^*$ and $0<\delta<1 < A_*<A^*$ be fixed constants and $\{\hbbeta,\hsigma\}$ 
be a solution of (\ref{eq:grplsoopt-s2}) with 
\[
\omega_{j}/A^* \le \|\matX_{G_j}\|_S\big\{\sqrt{d_{j}} + \sqrt{2\log(M/\delta)}\big\}\big/n \le \omega_j/A_*. 
\] 
Let $\sigma^* = \|\by-\matX\bbeta^*\|_2/\sqrt{n}$. 
Suppose $\matX$ satisfies the sub-Gaussian condition (\ref{subGaussian-cond}) with 
$c_*\le $eigenvalues$(\bSigma)\le c^*$, 
$\by-\matX\bbeta^*\sim \sfN_{n}(\bzero,\sigma^{2}\matI_{n}\,)$, and 
$\supp(\bbeta^*)\subseteq G_{S^*}$ with 
\bel{th-7-1}
\max_{1\le j\le M}\Big(|G_j|+\log(M/\delta)\Big)I_{\{|S^*|>0\}}
+ |G_{S^*}|+|S^*|\log(M/\delta) \le a_0n. 
\eel
Then, there exist constants $a_*$ and $C$ depending on $\{c_*,c^*,A_*,A^*\}$ only 
such that 
\bel{th-7-2}
&& \max\bigg\{\Big|1-\dfrac{\hsigma}{\sigstar}\Big|,
\frac{\|\matX\hbbeta-\matX\bbeta^*\|_2^2}{n\sigma^2}\,,\,
%\frac{\|\hbbeta-\bbeta^*\|_2^2}{\sigma^2}, 
\sum^{M}_{j=1}\frac{\norm{\hbbetagj-\bbeta_{\Gj}^{*}}{2}}{\sigma/\omega_{j}}\,,\,
\sum^{M}_{j=1}\frac{\norm{\matX_{G_j}(\hbbetagj-\bbeta_{\Gj}^{*})}{2}}{n^{1/2}\sigma/\omega_{j}}\bigg\} 
\cr && \leq C\left\{|G_{\Sstar}|+|\Sstar|\log(M/\delta)\right\}\big/n
\eel
with probability at least $1-\delta$ whenever $a_0\le a_*$. 
\end{theorem}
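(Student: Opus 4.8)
\textbf{Proof proposal for Theorem~\ref{th-7}.} The plan is to reduce the random‑design statement to the deterministic‑design Corollary~\ref{cor:grpsparse-ns}: under the sub‑Gaussian assumption (\ref{subGaussian-cond}) and the sample‑size bound (\ref{th-7-1}), I would show that the two design‑dependent hypotheses of that corollary hold simultaneously on an event of probability at least $1-\delta/2$, after relabeling constants. Writing $\omega_{*,j}=(\sqrt{d_j}+\sqrt{2\log(M/\delta)})/\sqrt n$, the penalty choice of Theorem~\ref{th-7} satisfies $\omega_j\asymp\|\matX_{G_j}/\sqrt n\|_S\,\omega_{*,j}$ deterministically, and $\omega_{*,j}$ is comparable up to absolute constants to the penalty scale $\sqrt{m_{d_j,n}}+\sqrt{2\log(M/\delta)/((n\vee2)-3/2)}$ of Theorem~\ref{th:betasgrp} and Corollary~\ref{cor:grpsparse-ns} (using $\sqrt{m_{d_j,n}}\le(d_j/n)^{1/2}+n^{-1/2}$), so the penalty‑level hypothesis of the corollary is met once $A_*,A^*$ are adjusted by absolute factors. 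It then remains to verify (a) the block spectral‑norm bound $\|\matX_{G_j}/\sqrt n\|_S^2\le c^{*\prime}$ for all $j$, and (b) a positive lower bound on $\SCIF^{(G)}_1(\xi,\bomega,\Sstar,\Sstar)$. On the intersection of (a), (b) and the Gaussian‑noise event $\calE$ of (\ref{calE-s}), which has probability at least $1-\delta/4$ by (\ref{th-6-5}), Corollary~\ref{cor:grpsparse-ns} applies and yields (\ref{th-7-2}); one also checks $\mu(\bomega,\xi)\asymp\{|G_{\Sstar}|+|\Sstar|\log(M/\delta)\}/n\le\mathrm{const}\cdot a_0$, so $\tau_+<1$ holds for $a_0\le a_*$, and a final union bound with a relabeling of $\delta$ gives probability $1-\delta$. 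The degenerate case $|S^*|=0$ ($\bbeta^*={\bf 0}$) is covered by the indicator in (\ref{th-7-1}) and causes no difficulty.

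For (a) I would apply Lemma~\ref{lm-subGaussian} with $\matB_1=\matB_2$ equal to the $0$–$1$ coordinate selector of $G_j$ — this is exactly the specialization (\ref{eq:crossprodspecnorm}) — which gives $\|\matX_{G_j}^T\matX_{G_j}/n-\bSigma_{G_j,G_j}\|_S\le\eps_0$ with probability $\ge1-e^{-t}$ whenever $C_0v_0\sqrt{(t+2|G_j|)/n}<\eps_0$. Taking $t=\log(2M/\delta)$ and invoking $|G_j|+\log(M/\delta)\le a_0n$ from (\ref{th-7-1}), a union bound over $j=1,\dots,M$ gives, with probability $\ge1-\delta/4$, both $\max_{j\le M}\|\matX_{G_j}/\sqrt n\|_S^2\le c^*+\eps_0$ and $\min_{j\le M}\|\matX_{G_j}/\sqrt n\|_S^2\ge c_*-\eps_0$. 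This supplies (a) and, with the penalty choice, the bound $\omega_j^2\le C_2\{|G_j|+\log(M/\delta)\}/n$ together with $\omega_j\asymp\omega_{*,j}$.

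The heart of the argument is (b). By the chain of inequalities (\ref{eq:constineq}) it suffices to bound the groupwise restricted eigenvalue from below, i.e.\ to prove a uniform one‑sided inequality of the form
\[
\frac{\|\matX\bu\|_2^2}{n}\ \ge\ \frac{c_*}{2}\,\|\bu\|_2^2\ -\ C_1\Big(\sum_{j=1}^M\omega_{*,j}\|\bugj\|_2\Big)^2\qquad\text{for all }\bu\in\Re^{p},
\]
valid with probability at least $1-\delta/4$. This is the step where Lemma~\ref{lm-subGaussian}, a pointwise spectral bound, must be upgraded to a statement uniform over a weighted mixed‑$\ell_{2,1}$ ball; I would obtain it by a peeling/discretization (Gaussian‑width) argument across the $M$ groups, the union over groups being precisely what produces the $\log(M/\delta)$ terms already encoded in $\omega_{*,j}$. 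Granting this inequality, restrict to $\bu\in\scrC^{(G)}(\xi,\bomega,\Sstar)$: the cone condition and Cauchy–Schwarz give $\sum_j\omega_j\|\bugj\|_2\le(1+\xi)\sum_{j\in\Sstar}\omega_j\|\bugj\|_2\le(1+\xi)\big(\sum_{j\in\Sstar}\omega_j^2\big)^{1/2}\|\bu\|_2$, and by (a), $\sum_{j\in\Sstar}\omega_j^2\le C_2\{|G_{\Sstar}|+|\Sstar|\log(M/\delta)\}/n\le C_2a_0$ (with $\omega_{*,j}\asymp\omega_j$). Hence the subtracted term is at most $C_1'(1+\xi)^2C_2a_0\|\bu\|_2^2\le(c_*/4)\|\bu\|_2^2$ once $a_0\le a_*:=c_*/\{4C_1'(1+\xi)^2C_2\}$, so $\{{\rm RE}^{(G)}(\xi,\bomega,\Sstar,\Sstar)\}^2\ge c_*/4$ and, via (\ref{eq:constineq}), $\SCIF^{(G)}_1(\xi,\bomega,\Sstar,\Sstar)\ge c_*/4$. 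I expect this uniform restricted‑eigenvalue bound to be the main obstacle, since it genuinely goes beyond Lemma~\ref{lm-subGaussian} — it needs chaining, not a single spectral‑norm estimate — and one must track the cone's complexity $|G_{\Sstar}|+|\Sstar|\log(M/\delta)$, exactly the quantity controlled in (\ref{th-7-1}), so that it enters at the correct order.
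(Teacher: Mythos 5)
Your overall reduction is the paper's: Theorem~\ref{th-7} is obtained there by combining the deterministic-design result (Theorem~\ref{th:betasgrp}, packaged as Corollary~\ref{cor:grpsparse-ns}) with a high-probability verification of its design hypotheses, and your handling of the penalty calibration, the block spectral norms via Lemma~\ref{lm-subGaussian} with $\matB_1=\matB_2$, the event $\calE$, and the $\tau_+<1$ bookkeeping all track the paper. The divergence is at the step you yourself flag as the crux: the lower bound on $\SCIF^{(G)}_1(\xi,\bomega,\Sstar,\Sstar)$. You propose a uniform transfer inequality $\|\matX\bu\|_2^2/n\ge (c_*/2)\|\bu\|_2^2-C_1\big(\sum_j\omega_{*,j}\|\bugj\|_2\big)^2$ over all of $\Re^p$, to be proved by chaining/peeling over the weighted $\ell_{2,1}$ ball (the Rudelson--Zhou style route). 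The paper instead proves Proposition~\ref{prop-2}: a purely deterministic reduction (via the shifting inequality applied to the ordered group norms) of $\SCIF$, RE and CC on the cone $\scrC^{(G)}(\xi,\bomega,\Sstar)$ to the group-sparse eigenvalue $\rho_q(x_0t_0)$ and cross-coherence $\theta_q(x_0t_0,y_0t_0)$, which involve only supports of at most $g^*$ groups; these are then controlled by applying Lemma~\ref{lm-subGaussian} to each pair of candidate supports and taking a union bound over $\binom{M}{g^*}^2$ subsets, with $s^*+2g^*\log(M/\delta)\lesssim n_*$ absorbing the combinatorial cost. So no chaining is needed, which is precisely what makes Theorem~\ref{th-7} ``a direct consequence'' of results already in the paper. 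Your route would also work and is arguably more modular, but as written it leaves the only genuinely hard step as a declared intention rather than an argument; if you pursue it, the complexity you must track in the chaining is exactly the $|G_{\Sstar}|+|\Sstar|\log(M/\delta)$ you identify, and you should note that the same bound must also cover $\SCIF^{(G)}_q(\xi,\bomega,\Sstar,T^*)$ (needed for the $\ell_2$ and mixed-norm error bounds in (\ref{sgl-est})), not only the $(\Sstar,\Sstar)$ version --- your full-$\|\bu\|_2^2$ formulation does deliver this via (\ref{eq:constineq}).
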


Theorem \ref{th-7} justifies the working assumption for sub-Gaussian designs. 
It demonstrates the benefit of the strong group sparsity as the sample size 
condition (\ref{th-7-1}) is typically weaker than the usual $\|\bbeta^*\|_0\{1+\log(p/\delta)\} \le a_0n$ 
for the Lasso when $\supp(\bbeta) = G_{S^*}$. 
We omit its proof as it is a direct consequence of Theorem \ref{th:betasgrp} and 
Proposition \ref{prop-2} below. {}{ We preface the presentation of Proposition \ref{prop-2} by first defining the following quantities.}

Let $q>1$ and $\bff = (f_1,\ldots,f_M)^T$ with $f_j>0$. Define 
\bel{rho}
\rho_{q}(s) = \inf_{\bu}\sup_{\bv}\bigg\{\frac{\bv^T(\matX^T\matX/n)\bu}{\|\bv\|_{(q/(q-1))}\|\bu\|_{(q)}}: 
\supp(\bu)=\supp(\bv)=G_{B}, \min_{|B\setminus S|\le 1}\|\bff_S\|_2^2 < s \bigg\}
%\cr && \qquad\qquad\qquad\qquad\qquad\qquad\qquad\qquad\qquad   
%|B\setminus S|\le 1,\|\bff_S\|_2^2 < s \bigg\}, 
\eel
with weighted $\ell_{2,q}$ norm 
$\|\bv\|_{(q)} = \Big(\hbox{$\sum_{j=1}^M$} f_j^2\big(\|\bv_{G_j}\|_2/f_j\big)^q\Big)^{1/q}$, 
and 
\bel{theta}
&& \theta_q(s,t)=\sup\bigg\{\frac{\bv^T(\matX^T\matX/n)\bu}
{\|\bv\|_{(q/(q-1))}\|\bu\|_{(q)}}: \supp(\bu)=G_{B_1},\supp(\bv)=G_{B_2},
\cr && \qquad\qquad\qquad\quad |B_k\setminus S_k|\le 1,\|\bff_{S_1}\|_2^2 < s, \|\bff_{S_2}\|_2^2 < t, 
B_1\cap B_2= \emptyset \bigg\}. 
\eel
Under the norm $\|\cdot\|_{(q)}$, 
$1/\rho_q(s)$ is the maximum operator norm of $(\matX_{G_B}^T\matX_{G_B}/n)^{-1}$ in {}{$\bbR^{|G_B|}$}, 
and $\theta(s,t)$ is the maximum operator norm of $\matX_{G_{B_2}}^T\matX_{G_{B_1}}/n$. 
In particular, $\rho_2(s)$ is the smallest eigenvalue of 
$\matX_{G_B}^T\matX_{G_B}/n$ under the given constraints on the support set $G_B$. 
Let $a_q = (1-1/q)/q^{1/(q-1)}$. 
For $\xi>0$, $T\subset\{1,\ldots,M\}$, $t_0 = \sum_{j\in T}f_j^2$, $x_0 \ge 1$, $1\le y_0 \le x_0/a_q$ 
and $m\in \{1,2\}$, define quantities 
$C_q(\xi,x_0,y_0) = \xi +\big(1+a_qy_0-x_0\big)_+x_0^{-1/q}$ and 
\bel{kappa_m}
\kappa_{q,m}(\xi,t_0,x_0,y_0) = \rho_q(x_0t_0) - m\theta_q(x_0t_0,y_0t_0)
y_0^{1/q-1}C_q(\xi,x_0,y_0). 
\eel

\begin{proposition}\label{prop-2} (i) 
Suppose $\omega_j = C_nf_j$ for some constant $C_n$ not depending on $j$. Then, 
\bel{prop-2-1a}
& {\rm RE}^{(G)}(\xi,\bomega,T,T')
\ge \kappa_{2,2}^{1/2}(\xi,t_0,x_0,y_0)/\{1+\delta'\big(1+\xi)/2\},
\\ \label{prop-2-1b}
& {\rm CC}^{(G)}(\xi,\bomega,T) 
\ge \kappa_{2,2}^{1/2}(\xi,t_0,x_0,y_0)
\\ \label{prop-2-2}
&\displaystyle 
{\rm CIF}^{(G)}_q(\xi,\bomega,T,T')
\ge \frac{\kappa_{q,1}(\xi,t_0,x_0,y_0)}{(x_0+\max_jf_j^2/t_0)^{1/q}\{1+\delta'\big(1+\xi)a_q^{1-1/q}\}}, 
\eel
with $\delta' = 0$ for $T'=T$ and $\delta'=1$ for $T'=T^*$, and for $1\le q\le 2$ 
\bel{prop-2-3}
\min\Big({\rm SCIF}^{(G)}_q(\xi,\bomega,T,T'),\frac{{\rm CIF}^{(G)}_q(\xi,\bomega,T,T')}{(1+\xi)^{-1}}\Big)
\ge \frac{\kappa_{2,1}(\xi,t_0,x_0,y_0)}{1+\delta'\big(1+\xi)a_q^{1-1/q}}. 
\eel
(ii) Suppose $\matX$ satisfies the sub-Gaussian condition (\ref{subGaussian-cond}) with 
$c_*\le $eigenvalues$(\bSigma)\le c^*$
and $\omega_j/A^*\le C_n\|\matX_{G_j}/\sqrt{n}\|_S\big\{\sqrt{|G_j|}+\sqrt{2\log(M/\delta)}\big\}
\le \omega_j/A_*$, where $\{c_*,c^*,A_*,A^*\}$ are positive constants. % with $A_*>1$. 
Let $\delta' = 0$ for $T'=T$ and $\delta' =1$ for $T'=T^*$.  
For any $\epsilon_0\in (0,1)$, there exists $a_0$ depending on 
$\{\epsilon_0,c_*,c^*,A_*,A^*\}$ only such that 
\bes
\SCIF^{(G)}_q(\xi,\bomega,T,T')
\ge (1-\epsilon_0)\lam_{\min}(\bSigma)\big/\big\{1+\delta'\big(1+\xi)a_q^{1-1/q}\big\},\quad 1\le q\le 2. 
\ees
with at least probability $1-\delta$ whenever (\ref{th-7-1}) holds. 
Moreover, the inequality also holds with $\SCIF^{(G)}_q(\xi,\bomega,T,T')$ replaced 
by $\big\{{\rm RE}^{(G)}(\xi,\bomega,T,T')\big\}^2$ for $q=2$, 
by $\big\{{\rm CC}^{(G)}(\xi,\bomega,T)\big\}^2$ for $q=1$ and $T'=T$, 
or by $(1+\xi){\rm CIF}^{(G)}_q(\xi,\bomega,T,T')$. 
\end{proposition}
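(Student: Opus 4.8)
The plan is to prove part (i) as a purely deterministic statement by a weighted-group ``shelling'' argument in the spirit of the cone-invertibility analysis for the Lasso (Ye--Zhang, Sun--Zhang), and then to obtain part (ii) by feeding (i) the Gram-matrix concentration of Lemma~\ref{lm-subGaussian} together with a union bound over sparse group supports. Throughout part (i) I would use that $\omega_j=C_nf_j$, so that $\scrC^{(G)}(\xi,\bomega,T)=\scrC^{(G)}(\xi,\bff,T)$ and the weighted norms $\|\cdot\|_{(q)},\|\cdot\|_{(q/(q-1))}$ are expressed through $\bff$.

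\emph{Part (i).} Fix $\bu$ in the relevant cone ($\scrC^{(G)}(\xi,\bomega,T)$ for the RE, CC, CIF bounds, $\scrC^{(G)}_-(\xi,\bomega,T)$ for the SCIF bound); the cone constraint reads $\sum_{j\in T^c}f_j\|\bu_{G_j}\|_2\le\xi\sum_{j\in T}f_j\|\bu_{G_j}\|_2$. Order the groups of $T^c$ so that $\|\bu_{G_j}\|_2/f_j$ is non-increasing and peel off successive blocks: a first block that, together with $T$, forms a ``head'' $G_B$ with $\|\bff_B\|_2^2<x_0t_0+\max_jf_j^2$ (so that deleting at most one group leaves $\|\bff_S\|_2^2<x_0t_0$, matching the constraint in $\rho_q,\theta_q$), then blocks of $\bff$-mass $\asymp y_0t_0$ each. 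The monotone ordering plus H\"older's inequality between $\|\cdot\|_{(q)}$ and the weighted $\ell_{2,1}$ norm over a block of $\bff$-mass $y_0t_0$ gives $\|\bu^{(\ell+1)}\|_{(q)}\le(y_0t_0)^{1/q-1}\sum_{j\in\text{block }\ell}f_j\|\bu_{G_j}\|_2$; summing over $\ell$ and invoking the cone constraint bounds $\sum_{\ell\ge2}\|\bu^{(\ell)}\|_{(q)}$ by $(y_0t_0)^{1/q-1}C_q(\xi,x_0,y_0)$ times $\|\bu\|_{(q)}$ (or the analogue with $\|\bu_{G_T}\|$-type quantities), where $C_q(\xi,x_0,y_0)=\xi+(1+a_qy_0-x_0)_+x_0^{-1/q}$ arises from the optimal block size, the $(\cdot)_+$ term recording whether the first tail block fits inside the head. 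Writing $\bu=\bu^{(1)}+\sum_{\ell\ge2}\bu^{(\ell)}$ with $\bu^{(1)}$ supported on $G_B$, I would then lower bound the relevant form by applying the definition of $\rho_q(x_0t_0)$ to the head and that of $\theta_q(x_0t_0,y_0t_0)$ to each cross term against a tail block. For the CIF this produces $\max_j\omega_j^{-1}\|\matX_{G_j}^T\matX\bu\|_2(\sum_{j\in T}\omega_j^2)^{1/q}\ge\kappa_{q,1}(\xi,t_0,x_0,y_0)\,n\,(\sum_{j\in T'}\omega_j^2(\|\bu_{G_j}\|_2/\omega_j)^q)^{1/q}$ up to the stated factors $(x_0+\max_jf_j^2/t_0)^{1/q}$ and $1+\delta'(1+\xi)a_q^{1-1/q}$ (the latter from passing from $\|\bu_{G_T}\|$-quantities to $\|\bu\|_{(q)}$ when $T'=T^*$); for the RE and CC one instead expands $\|\matX\bu\|_2^2/n=\bu^T(\matX^T\matX/n)\bu$, which yields cross terms on both sides and hence the index $m=2$, i.e.\ $\kappa_{2,2}$; and for the SCIF the sign restriction $\bu_{G_j}^T\matX_{G_j}^T\matX\bu\le0$ $(j\in T^c)$ lets one drop those terms, so $\|\matX\bu\|_2^2/n\le\sum_{j\in T}\bu_{G_j}^T(\matX_{G_j}^T\matX\bu/n)$, leaving a one-sided estimate with $\rho_2,\theta_2$ and hence $\kappa_{2,1}$; the comparison between $\SCIF^{(G)}_q$ and $(1+\xi)^{-1}{\rm CIF}^{(G)}_q$ in (\ref{prop-2-3}) is exactly the elementary inequality noted just after (\ref{eq:constineq}).

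\emph{Part (ii).} Work on the event $\calE_0$ on which $\|\bSigma_{A,A}^{-1/2}(\matX_{G_A}^T\matX_{G_A}/n-\bSigma_{A,A})\bSigma_{A,A}^{-1/2}\|_S\le\epsilon_1$ simultaneously for every $A\subseteq\{1,\ldots,M\}$ with $\sum_{j\in A}(|G_j|+\log(M/\delta))\le C_1a_0n$. On $\calE_0$ one has $\|\matX_{G_j}/\sqrt n\|_S^2\in[(1-\epsilon_1)c_*,(1+\epsilon_1)c^*]$, so the hypothesis on $\omega_j$ pins $f_j^2\asymp|G_j|+\log(M/\delta)$; hence by (\ref{th-7-1}) every support set $G_B,G_{B_1\cup B_2}$ occurring in part (i) has total dimension at most a constant multiple of $x_0(|G_{S^*}|+|S^*|\log(M/\delta))+\max_j(|G_j|+\log(M/\delta))\le(x_0+1)a_0n$, which for $C_1$ a suitable multiple of $x_0+y_0$ lies within the range defining $\calE_0$. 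Consequently $\rho_2(x_0t_0)\ge(1-\epsilon_1)\lam_{\min}(\bSigma)$ and $\theta_2(x_0t_0,y_0t_0)\le(1+\epsilon_1)c^*$ on $\calE_0$. Choosing $x_0=1+a_qy_0$ (so $C_q(\xi,x_0,y_0)=\xi$) and then $y_0$ large enough, as a function of $\epsilon_0,c_*,c^*,\xi$, that $2(1+\epsilon_1)c^*\xi y_0^{-1/2}\le(\epsilon_0/2)c_*$, together with $\epsilon_1\le\epsilon_0/2$, gives $\kappa_{2,1}\ge\kappa_{2,2}\ge(1-\epsilon_0)\lam_{\min}(\bSigma)$ (using $c_*\le\lam_{\min}(\bSigma)$), and the asserted lower bounds on $\SCIF^{(G)}_q$, $\{{\rm RE}^{(G)}\}^2$, $\{{\rm CC}^{(G)}\}^2$ and $(1+\xi){\rm CIF}^{(G)}_q$ follow by inserting these estimates into the corresponding bounds of part (i). Finally, $\bbP(\calE_0^c)\le\delta$: for a fixed $A$ with $m$ groups, Lemma~\ref{lm-subGaussian} with $\matB_1=\matB_2$ the $0$--$1$ matrix selecting the columns of $G_A$ (so $r_1=r_2=|G_A|$) gives, via (\ref{lm-1-1}), failure probability at most $e^{-t}$ whenever $C_0v_0\sqrt{t/n+2|G_A|/n}<\epsilon_1$; taking $t=m\log M+m+\log(2/\delta)$ and summing over $m$ using $\#\{A:|A|=m\}\le\binom{M}{m}\le M^m$ yields $\bbP(\calE_0^c)\le\delta$, while the constraint $C_0v_0\sqrt{t/n+2|G_A|/n}<\epsilon_1$ is ensured by (\ref{th-7-1}) once $a_0\le a_*$ with $a_*$ small depending on $\epsilon_0,c_*,c^*,A_*,A^*$ (and the fixed $v_0$), the term $\log(2/\delta)$ being absorbed into $|S^*|\log(M/\delta)$ in the nondegenerate case $|S^*|\ge1$ (the case $|S^*|=0$ is vacuous, since then $T=\emptyset$ forces the cone to be $\{\bzero\}$ and all the constants are infinite).

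The main obstacle is the deterministic shelling argument of part (i): pinning down the exact constant $C_q(\xi,x_0,y_0)$ together with the factors $(x_0+\max_jf_j^2/t_0)^{1/q}$ and $1+\delta'(1+\xi)a_q^{1-1/q}$ requires careful bookkeeping of the weighted $\ell_{2,q}$, $\ell_{2,q/(q-1)}$ and $\ell_{2,1}$ norms across blocks, of the discrete one-group overshoot in the head, and of the distinction $T'=T$ versus $T'=T^*$; part (ii) is then a comparatively routine concentration-plus-union-bound argument.
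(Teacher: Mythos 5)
Your proposal is correct and follows essentially the same route as the paper's proof: a deterministic shelling/block decomposition of the cone (head of $\bff$-mass about $x_0t_0$ plus one-group overshoot, tails of mass $y_0t_0$, shifting inequality to control the tails via the cone constraint, $\rho_q$ on the head and $\theta_q$ on the cross terms, with the sign restriction dropping the off-support terms for the SCIF and the quadratic expansion producing the factor $m=2$ for RE/CC), followed in part (ii) by the Gram-matrix concentration of Lemma~\ref{lm-subGaussian} and a union bound over group supports. The paper merely packages the shelling through a piecewise-constant non-increasing function $h(x)$ and integrals rather than discrete block sums, and tunes $x_0=a_qy_0$ instead of your $x_0=1+a_qy_0$; these are cosmetic differences.
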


%UUP condition with $\xi=1$, $x_0\ge 1$: 
%$1-\delta^-_{x_0} > \big\{1/\sqrt{y_0}+(1+y_0/4-x_0)_+/\sqrt{x_0y_0}\big\}\theta_{x_0,y_0}$.

\section{Simulation Results}\label{sec:simures}
In this section we provide a few simulation results in support of our theory developed in Sections \ref{sec:groupinf} and \ref{sec:consisres}. As a prelude, we first show the performance of 
the scaled group Lasso procedure in a simulation experiment.

\subsection{Normality of estimate of the scale parameter} We consider two simulation designs with  $(n=1000,p=200)$ and $(n=1000,p=2000)$ design matrices with the elements of the design matrix generated independently from $\sfN(0,1)$. We assume that the true parameter $\bbetastar$ has an inherent grouping with total set of $p$ parameters divided into groups of size $d_{j}=4$. In the design  $(n=1000,p=200)$ we have total number of groups $M=50$ and in $(n=1000,p=2000)$, $M=500$. For both scenarios, the true parameter $\bbetastar$ is assumed to be ($g=2, s=8$) strong group sparse with its non-zero coefficients in $\{-1,1\}$. Both simulation designs have a $\sfN(0,\sigma^{2})$ error added to the true regression model $\matX\bbetastar$ with $\sigma=1$. We also assume that the design matrix is groupwise  orthogonalized 
in the sense of $\matX_{G_j}^T\matX_{G_j}/n = \matI_{G_j\times G_j}$, $j=1,\ldots,M$. 

In estimation of $\sigma$ we employ the scaled group Lasso procedure as shown in (\ref{algo:iter}). The groupwise penalty factors $\omega_{j}$'s are chosen to equal to $\lambda(\sqrt{d_{j}/n}+ \sqrt{(2/n)\log (M)})$ for some fixed $\lambda>0$. The implementation of group Lasso procedure is via the \textsf\textbf{R} package \texttt{grpreg}. 
\begin{figure}[t]
    \centering
    \begin{minipage}{.35\textwidth}
  \centering
  \includegraphics[width=1.1\linewidth, height=0.3\textheight]{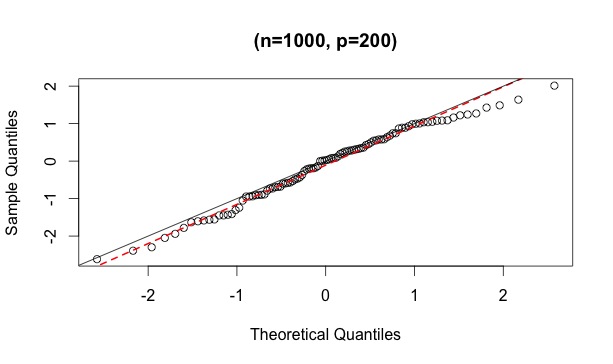}
\end{minipage}
\hspace{0.1\textwidth}%
\begin{minipage}{.35\textwidth}
  \centering
  \includegraphics[width=1.1\linewidth, height=0.3\textheight]{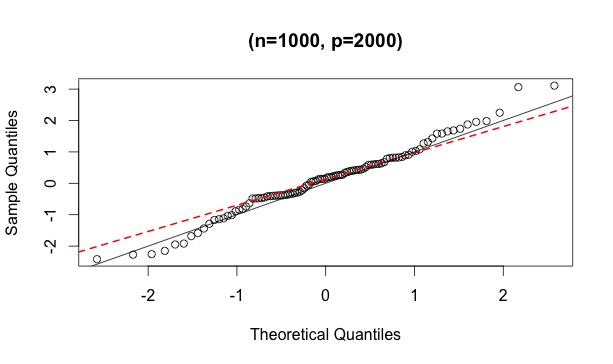}
\end{minipage}%
    \caption{Normal QQ plot for the test statistic for $\hsigma$ in (\ref{eq:teststatsigma}) in Theorem \ref{th:betasgrp} with $n=1000, p=\{200,2000\}, g=2, s=8$. The results are produced with 100 replications of the scaled group Lasso. The red dotted line is fitted through $1^{{\rm st}}$ and $3^{{\rm rd}}$ sample quantiles. }
    \label{fig:qqplotsigma}
\end{figure}

In the design setup with $(n=1000,p=200)$, the estimate of $\hsigma$ averaged over a 100 replications is 0.997 with a standard deviation of 0.02. In the design setup with $(n=1000,p=2000)$, the estimate of $\hsigma$ averaged over a 100 replications is 1.0002 with a standard deviation of 0.02.
Additionally Figure \ref{fig:qqplotsigma} shows the Gaussian QQ plots of the test statistic $\sqrt{2n}\left({\hsigma}/{\sigma} - 1\right)$.

\subsection{Asymptotic distribution of regression parameters}We also seek the empirical validation of the asymptotic convergence of the group $\bbetagj$ as described in our theoretical results. For bias correction we take the penalty function in (\ref{pen-multi-reg}) to be the Frobenius norm and apply group Lasso based optimization.
We also consider a new simulation design which is similar to the earlier design with $(n=1000,p=200)$ and $\sigma=1$. We will consider two different schemes for empirical analysis for asymptotic convergence.
\subsubsection*{Small group sizes}
The true parameter $\bbetastar$  is simulated to be $(s=40, g=10)$ strong group sparse with its nonzero values in the interval [2,3]. More specifically, $\bbetastar$ is grouped into groups of  sizes $d_{j}=4$ for all $j$. We construct the test statistic of $\bmugj$ as in (\ref{test}) for one of the nonzero groups.  The \textbf{left panel} of Figure \ref{fig:param} provides $\chi^{2}_{4}$ based QQ plot for the sample quantiles of our test statistic.

\subsubsection*{Large group sizes}
The true parameter $\bbetastar$  is simulated to be $(s=40, g=2)$ strong group sparse with its nonzero values between [2,3]. More specifically, $\bbetastar$ is grouped into 10 groups each of sizes $d_{j}=20$ for all $j$. We let the sparsity of the true parameter $\bbetastar$ to be $s=40$ contained within 2 separate groups. Again, we construct the test statistic of $\bmugj$ as in (\ref{test}) for one of the nonzero groups. The \textbf{right panel} of Figure \ref{fig:param} shows the QQ plot for this group's size- normalized test statistic as defined in (\ref{eq:largegrp}). As the figure suggests, for large group sizes asymptotic normality of the group test statistic is empirically supported.

\begin{figure}[t]
\centering%
\begin{minipage}{0.35\textwidth}
  \includegraphics[width=1.1\linewidth, height=0.3\textheight]{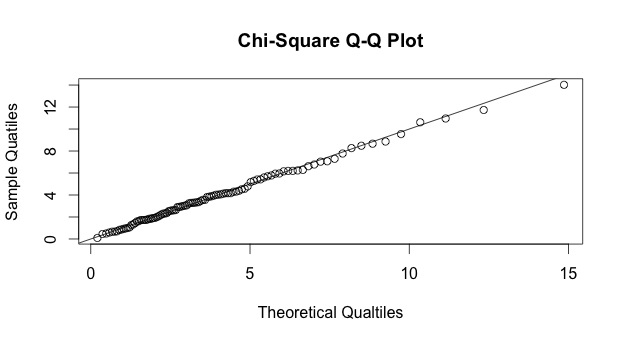}
\end{minipage}%
\hspace{0.1\textwidth}%
\begin{minipage}{0.35\textwidth}\centering
  \includegraphics[width=1.1\linewidth, height=0.3\textheight]{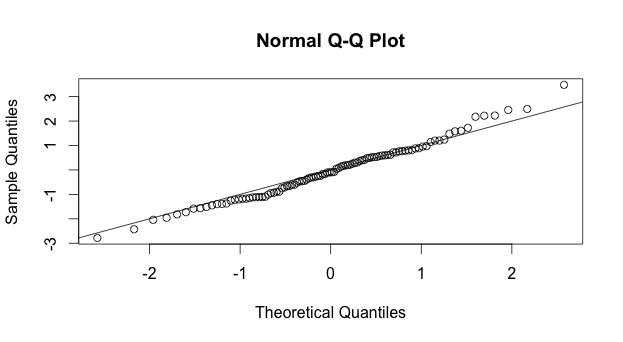}
\end{minipage}%
\caption{The left panel considers test for a \textbf{Small group}. It shows chi-squared QQ plot for the test statistic $T_{G}$ with $n=1000, p=200, g=10, s=40$. The theoretical quantiles were drawn from $\chi^{2}_{4}$ random variable. The group being tested has size 4. The right panel considers test for a \textbf{Large group}. It shows normal QQ plot for the test statistic $(T^{2}_{G}-|G|)/\sqrt{2|G|}$ with $n=1000, p=200, g=2, s=40$.  Here the group size of the test group is 20.}
\label{fig:param}
\end{figure}

\subsection{Comparison with other methods} In this subsection we compare the performance of our group Lasso methods with other recent methods developed for inference in high dimensional models. In particular we consider three different classes of methods.
\begin{table}[ht]
\footnotesize
\begin{center}
\scalebox{0.815}{\begin{tabular}{cccccp{0.005cm}ccccp{0.005cm}ccccp{0.005cm}cc}
\toprule[2pt]
{\multirow{2}{*}{ \textbf{Design}}} &  \multicolumn{4}{c} {\textbf{Proposed Method}} & &\multicolumn{4}{c} {\textbf{Projection Based}} & &\multicolumn{4}{c}{\textbf{Multi sample-split}} & & \multicolumn{2}{c}{\multirow{2}{*}{\textbf{Group Bound}}}\\
\cline{2-5}\cline{7-10} \cline{12-15}\\[-0.7em]
 & \multicolumn{2}{c}{\textbf{Chi-squared}} & \multicolumn{2}{c}{\textbf{Normal}} & &\multicolumn{2}{c}{\textbf{Lasso}}& \multicolumn{2}{c}{\textbf{Ridge}} & & \multicolumn{2}{c}{\textbf{Lasso}} & \multicolumn{2}{c}{\textbf{Group Lasso}} & & \\
 \cline{2-3}\cline{4-5}\cline{7-8}\cline{9-10} \cline{12-13} \cline{14-15}\cline{17-18}\\[-0.7em]
$\boldsymbol{(g,\ s)},\ \boldsymbol{(\rho,\ \tau)}$& \textbf{FP} & \textbf{TP} & \textbf{FP} & \textbf{TP} & & \textbf{FP} & \textbf{TP} & \textbf{FP} & \textbf{TP} & & \textbf{FP} & \textbf{TP} & \textbf{FP} & \textbf{TP}& & \textbf{FP} & \textbf{TP}\\
\hline\\[-0.9em]

(1,\ 5),\ (0,\ 0.1) & 0.04 &0.11 & 0.04 &0.11 &{}& 0 & 0.02 & 0 & 0 & {} & 0 & 0 & 0 & 0 &{} & 0  & 0 \\[0.2em]
(1,\ 5),\ (0,\ 0.5) & 0 & 1 & 0 & 1 &{}& 0 & 1 & 0.01 & 0.2 & {} & 0 & 0.72 & 0 & 0.23 &{} &0  & 0 \\[0.2em]
(1,\ 5),\ (0,\ 1) & 0& 1 & 0 & 1 &{}& 0& 1 & 0 & 1 & {} & 0 & 1 & 0 & 1 &{} &0 & 0 \\[0.2em]
\hline\\[-0.9em]

(1,\ 5),\ (0.5,\ 0.1) & 0.03 & 0.3 & 0.03 & 0.3 &{}& 0 & 0.06 & 0 & 0 & {} & 0 & 0 & 0&0 &{} &0  & 0 \\[0.2em]
(1,\ 5),\ (0.5,\ 0.5) & 0 &1 & 0 & 1 &{}& 0 & 1 & 0 & 0.71 & {} & 0 & 0.99 & 0 & 0.47 &{} &0 & 0.02\\[0.2em]
(1,\ 5),\ (0.5,\ 1) & 0 & 1 & 0 & 1 &{}& 0 & 1 & 0 & 1 & {} & 0 & 1 & 0 & 1 &{} & 0  & 0.97 \\[0.2em]

\hline\\[-0.9em] 
(1,\ 5),\ (0.9,\ 0.1) & 0.02 & 0.45 &0.02 & 0.45 &{}& 0 & 0.02 & 0.2 & 0.02 & {} & 0 & 0.32 & 0 & 0 	&{} &0 & 0 \\[0.2em]
(1,\ 5),\ (0.9,\ 0.5) & 0 & 1 & 0 & 1 &{}& 0 & 1 & 0 & 0.07 & {} & 0 & 0.22 & 0 & 0.01 &{} &0  & 0.12 \\[0.2em]
(1,\ 5),\ (0.9,\ 1) & 0 & 1 & 0 & 1 &{}& 0 & 0.98 & 0 & 0.81 & {} & 0 & 0.86 & 0 & 0.32 &{} &0 & 1\\[0.2em]
	\hline\\[-0.9em]
(1,\ 20),\ (0.9,\ 0.1) & 0 & 1 & 0 & 1 &{}& 0 & 0.38 & 0 & 0.01 &{}&0 &0.05 & 0 & 0.00 &{}& 0 & 0.04 	\\
	%\hline\\[-0.9em]
\bottomrule[2pt]
\end{tabular}}
\end{center}
\caption{Comparison of true positive and false positive rates for three different choices of block correlation $\rho$ and three choices of signal parameter $\tau$. The scale parameter $\sigma=1$ in all cases. The results are based on 100 replications for testing the nonzero group (for TP) and first zero group (FP). Performance of all the tests are good for the strong signal ($\tau$= 1). For the weak signal $\tau=0.1$, group Lasso clearly out-performs other methods.}
\label{tab:comp}
\end{table}
\begin{description}
\item[Projection based:] For the projection based methods, we consider two cases. 1) The Ridge estimation based testing with correction for projection bias that was developed in \cite{Buhl13}. 2) The Lasso relaxed projection followed by bias correction idea developed in \cite{Zhang2014} which is similar to the de-sparsified Lasso in \cite{VandeGeer2013}. These methods are adapted for testing of groups of variables adjustment of individual $p$-values; see \cite{dezeurehdi14}.

\item[Sample split based:] The idea of single sample splitting was developed in \cite{WasRoe09} which involves splitting the sample into two parts. The first part is used to select variables and the second to construct $p$-values for the selected variables in the first model. The final step is to adjust the $p$-values for control of the familywise error rate (FWER). Due to the variability of the $p$-values for different splittings, \cite{mein09} proposed multi sample-splitting idea which involves running the single sample splitting $B$ times and aggregating the $B$ adjusted $p$-values. We employ the multi sample-splitting with two different variable selection procedures: Lasso and group Lasso. For Lasso, the groupwise $p$-value is obtained by  Bonferroni adjustments.

\item[Group bound:] The final procedure we consider is the group bound method developed in \cite{Mein13}. One advantage of this method is that it doesn't require any assumptions on the design matrix.
\end{description}
Implementation of all the above methods are available in the $\textbf{R}$ package \texttt{hdi}; see also \cite{dezeurehdi14}.

\noindent \textbf{Simulation Design:} We consider a very simple simulation design where the design matrix $\matX\in \Re^{n\times p}$ is assumed to have iid rows with each row following $\sfN(\bzero, \bSigma)$, where $\bSigma$ is assumed to be a correlation matrix having a block diagonal structure with block size $k=5$. We take $n=100$ and $p=200$ so that $\bSigma$ has $M=$40 blocks. Within each block, the correlation is assumed to be $\rho$. For our simulations, we consider three possible choices of $\rho$ namely $\{0,0.5,0.9\}$. 

 The true parameter $\bbeta$ is assumed to have the group structure as defined by the block structure of $\matX$. Moreover we assume only the first group has nonzero signals with all of them having the same value $\tau>0$. Thus $\bbetastar$ is of the form,
\[
\bbetastar= (\underbrace{\tau,\tau,\tau,\tau,\tau}_{\text{group 1}}, \underbrace{0,0,0,0,0}_{\text{group 2}},\cdots,\underbrace{0,0,0,0,0}_{\text{group 40}})
\]
Thus in all these cases, the true signal $\bbetastar$ is $(g=1, s=5)$ strong group sparse. We consider three choices of the signal parameter $\tau$: $\{0.1, 0.5, 1\}$.

We also consider an additional scenario, where we take $k=20$ so that number of groups $M=10$ (The last line of Table \ref{tab:comp}). For this case we only compare the performance for signal strength $\tau=0.1$ which highlights the performance of group Lasso.

The responses are simulated by $\by=\matX\bbetastar+\bepsa$ where $\bepsa\sim \sfN(0,\sigma)$. We take the true scale parameter $\sigma=1$ in all simulation designs and estimate $\sigma$ via scaled group Lasso. For application of the group Lasso based testing, we take the group weights equal to $\omega_{j}= 5(\sqrt{d_{j}/n} + \sqrt{(2/n)\log(M)})$ where $M=40$ and $d_{j}=5$ for group sizes=5 and $d_{j}=20$ for group sizes=20.

In Table \ref{tab:comp}, we provide a comparison of the true positive (TP) and false positive (FP) rates for 100 replications. It is clear from the table that group Lasso performs comparably or better than all the other methods. The false positive rates of all the methods are either 0 or close to zero for most of the designs. The true positive (TP) rate (power) of group Lasso method clearly dominates those of the other methods especially when the signal is not strong: $\tau=0.1$. One rationale for this would be the accumulation of small signals in the $\ell_{2}$ norm for the group that is used for the group Lasso. For group bound method, clearly the performance becomes comparable to group Lasso as the blockwise correlation $\rho$ is increased. This phenomenon is also observed for group Lasso procedure to a certain extent.

%\begin{figure}[H]
%\centering%
%  \textbf{(Model 2a)}\\
%\begin{minipage}{0.35\textwidth}
%  \includegraphics[width=\linewidth]{m2p1a}
%  \captionof{figure}{$T_{n}$}
%    \label{fig:m2p1a}
%\end{minipage}%
%\hspace{0.2\textwidth}%
%\begin{minipage}{0.35\textwidth}\centering
%  \includegraphics[width=\linewidth]{m2p2a}
%    \captionof{figure}{$\Ttil_{n}$}
%       \label{fig:m2p2a}
%\end{minipage}%
%\end{figure}

\section{Summary and Discussion}\label{sec:summdis}
We have considered statistical inference of variable groups in a high-dimensional linear regression setup. 
In particular we show the benefit of grouping in constructing chi-squared-type procedures for group inference. 
We construct such procedures via bias correction and group Lasso based relaxed projection. 
We show the validity of such approximate chi-squared-type inference under sample size conditions 
that could be potentially much weaker than the requirements for Lasso based procedures. 
This particular scaling also offers us valid statistical inference for a group of possibly unbounded 
number of variables. 

A key step of our methodology concerns the nonconvex optimization scheme (\ref{opt}) 
over the set of orthogonal projection matrices. 
To the best of our knowledge, solution of an optimization problem as in (\ref{opt}) is not yet well studied, 
either algorithmically or analytically.
However, we have proposed a convexation of (\ref{opt}) via a multivariate group Lasso with a weighted 
nuclear or Frobenius norm penalty, which provides feasible solutions for the optimization problem. 
As discussed in Remark \ref{remark-1}, our theoretical results only requires feasibility solutions of 
the optimization scheme. 
As the multivariate group Lasso with Frobenius norm penalty can be carried out 
using the group Lasso program, an interesting direction of research would be to 
develop efficient algorithm for the group nuclear norm penalty. 

Since our results can be directly applied to statistical inference for groups of variables with possibly 
unbounded sizes, application of our procedures for sparse nonparametric additive models 
\citep{RLLW09} would be another future direction of research.
%%%%%%%%%%%%%%%%%%%%%%%%%%%%%%%%%%%%%%%%%%%
%                                                APPENDIX
%
%%%%%%%%%%%%%%%%%%%%%%%%%%%%%%%%%%%%%%%%%%
\appendix
\section{Appendix}\label{sec:appendix}
This appendix provides proof of 

%\subsection{Proof of Proposition \ref{prop-1}}
%\begin{proof}
\medskip
{\bf Proof of Proposition \ref{prop-1}.} 
(i) Since both $\matZ_G$ and $\matX_G$ are $n\times |G|$ matrices, 
\bes
|G| = \rank(\matZ_G^T\matX_G)\le \rank(\matX_{G})\wedge \rank(\matZ_{G})\le |G|\wedge n,
\ees 
so that $\rank(\matP_G)=\rank(\matP_G\matX_G)=|G|$ and $\matP_G=\matP_{G,0}$. 
It follows that $\matP_G\matX_G (\matP_G\matX_G)^\dag\matP_G = \matP_G$.  
As $\matZ_G^T\matX_G$ is a $|G|\times|G|$ invertible matrix, 
$\matP_G\matX_G (\matZ_G^T\matX_G)^{-1}\matZ_G^T =  \matP_G$. 
Since $\rank(\matP_G\matX_G)=|G|$, we are allowed to cancel $\matP_G\matX_G$ to 
obtain $(\matP_G\matX_G)^\dag \matP_G =(\matZ_G^T\matX_G)^\dag\matZ_G^T$. 
This proves the first equality in (\ref{prop-1-2}). 
The second equality in (\ref{prop-1-2}) then follows from 
\bes
(\matP_G\matX_G)^\dag \matP_G\big(\matX_G\bbeta_G^* - \matX_G\hbbeta_G^{(init)}\big)
= \bbeta_G^* - \hbbeta_G^{(init)}, 
\ees
(\ref{eq:mod2}) and its estimated version, and the definition of the remainder term. 

(ii) Let $\matZ_1=\matP_G\matZ_G$. 
As $\matP_G=\matP_G\matP_{G,0}$ is the orthogonal projection to $\calR(\matZ_1)$, 
$\matZ_G^T\matX_G = \matZ_G^T\matP_{G,0}\matQ_G\matX_G
=\matZ_1^T\matP_G\matQ_G\matX_G$ and 
$\rank(\matX_G)=\rank(\matP_G\matQ_G)=\rank(\matZ_1^T\matX_G)$, 
so that 
\bes
(\matZ_G^T\matX_G)^\dag
=(\matZ_1^T\matP_G\matQ_G\matX_G)^\dag
=\matX_G^\dag(\matP_G\matQ_G)^\dag (\matZ_1^T)^\dag. 
\ees
Consequently, as $\matQ_G(\matP_G\matQ_G)^\dag 
= (\matP_G\matQ_G)^\dag = (\matP_G\matQ_G)^\dag\matP_G$ and 
$(\matZ_1^T)^\dag\matZ_G^T=\matP_G$, we have 
\bes
\hbmu_G - \hbmu_G^{(init)} 
&=& \matX_G(\matZ_G^T\matX_G)^\dag\matZ_G^T\Big(\by -\matX\hbbeta^{(init)}\Big) 
\cr &=& \matX_G\matX_G^\dag(\matP_G\matQ_G)^\dag (\matZ_1^T)^\dag
\matZ_G^T\Big(\by -\matX\hbbeta^{(init)}\Big) 
\cr &=& (\matP_G\matQ_G)^\dag\matP_G\Big(\by -\matX\hbbeta^{(init)}\Big). 
\ees
This gives (\ref{prop-1-3}). 
As $\matQ_G(\matZ_G^T\matQ_G)^\dag\matZ_G^T = (\matP_G\matQ_G)^\dag\matP_G^T$ 
by the same proof, (\ref{LDPE-G-pred}) also holds. 
Finally, (\ref{prop-1-5}) follows from (\ref{prop-1-1}) and (\ref{eq:mod2}).  
%\end{proof}

%\subsection{Proof of Lemma \ref{lm-subGaussian}}
%\begin{proof}
\medskip
{\bf Proof of Lemma \ref{lm-subGaussian}.}
Let $\bu_j, 1\le j\le r_k$, be the eigenvectors of $\matB_k^{T}\bSigma\matB_k$ 
corresponding to positive eigenvalues and $\matU_k=(\bu_1,\ldots,\bu_{r_k})$. 
Let $\matZ_k = \matX\matB_k((\matB_k^{T}\bSigma\matB_k)^\dag)^{1/2}\matU_k\in \bbR^{n\times r_k}$. 
We have $\bbE\matZ_k = {\bf 0}$, $\bbE(\matZ_k^T\matZ_k/n) = \matI_{r_k\times r_k}$, 
$\bbE(\matZ_1^T\matZ_2/n) = \matU_1^T\bOmega_{1,2}\matU_2$, and 
\bes
\sup_{\|\bb\|_2\le 1}\ \bbE \exp\left( \frac{(\bfe_i^T\matZ_k\bb)^2}{v_0} +\frac{1}{v_0} \right) \le 2,\ k=1,2. 
\ees
Moreover, $\matP_k = \matZ_k(\matZ_k^T\matZ_k)^{\dag}\matZ_k^T$ and 
$\|\matU_1^T\bOmega_{1,2}\matU_2\|_S=\|\bOmega_{1,2}\|_S \le 1$. 

For $1\le j\le k\le 2$ and any vectors $\bv_k\in\bbR^{r_k}$ with $\|\bv_k\|_2=1$, 
\bes
\bv_j^T\Big(\matZ_j^T\matZ_k/n - \bbE \matZ_j^T\matZ_k/n\Big)\bv_k 
= \frac{1}{n}\sum_{i=1}^n \Big\{(\bfe_i^T\matZ_j\bv_j)(\bfe_i^T\matZ_k\bv_k) 
- \bv_j^T\bbE(\matZ_j^T\matZ_k/n)\bv_k\Big\}
\ees
is an average of iid variables with 
\bes
&& \bbE \exp\left(\frac{(\bfe_i^T\matZ_j\bv_j)(\bfe_i^T\matZ_k\bv_k) 
- \bv_j^T\bbE(\matZ_j^T\matZ_k/n)\bv_k}{v_0}\right) 
\cr &\le& \left\{\prod_{k=1}^2\sqrt{\bbE \exp\left((\bfe_i^T\matZ_k\bv_k)^2/v_0\right) }\right\}e^{1/v_0} 
\cr &\le& 2. 
\ees
Since the size of an $\epsilon$-net of the unit ball in $\Re^{r_k}$ is bounded by $(1+2/\eps)^{r_k}$,
the Bernstein inequality implies that for $r^*=r_1+r_2$ and a certain numerical constant $C_0$, 
\bes
\bbP\Big\{ \|\matZ_j^T\matZ_k/n - \bbE(\matZ_j^T\matZ_k/n)\|_S > 
C_0v_0\max\Big(\sqrt{t/n+r^*/n},t/n+r^*/n\Big)\Big\}\le e^{-t}/3. 
\ees
This yields (\ref{lm-1-1}) as $\|\matU_1^T\bDelta\matU_2\|_S=\|\bDelta\|_S$ 
for all $\bDelta$ of proper dimension. 

Suppose $\rank(\matP_k)=r_k$. Let $r_0=\rank(\matP_1\matP_2)$ and 
$1\ge \hlam_1\ge \cdots \ge\hlam_{r_0} > 0$ be the (nonzero) singular values of $\matP_1\matP_2$. 
We have $\|\matP_1\matP_2\|_S=\hlam_1$ 
and $\|\matP_1\matP_2^\perp\|_S=\|\matP_1 - \matP_2\|_S = \sqrt{1-\hlam_{\min}^2}$ with 
$\hlam_{\min}=\hlam_{r_0}I\{r_0=r_1=r_2\}$. By definition, 
\bes
\matP_1\matP_2 = \matZ_1(\matZ_1^T\matZ_1)^{-1}\matZ_1^T\matZ_2(\matZ_2^T\matZ_2)^{-1}\matZ_2^T. 
\ees
Since $(\matZ_k^T\matZ_k)^{-1/2}\matZ_k^T$ are unitary maps from the range of $\matP_k$ to $\bbR^{r_k}$, 
the singular values of $\matP_1\matP_2$ is the same as those of 
\bes
(\matZ_1^T\matZ_1)^{-1/2}\matZ_1^T\matZ_2(\matZ_2^T\matZ_2)^{-1/2}. 
\ees

Now suppose that $\|\matZ_j^T\matZ_k/n - \bbE(\matZ_j^T\matZ_k/n)\|_S \le C_0v_0\sqrt{t/n+r/n}\le\eps_0<1$ 
for $1\le j\le k\le 2$. 
Recall that $1\ge\lam_1\ge\cdots\ge \lam_r>0$ are the nonzero singular values of $\bOmega_{1,2}$ 
and $\lam_{\min} = \lam_r I\{r=r_1=r_2\}$.  
As $\bbE(\matZ_k^T\matZ_k/n) = \matI_{r_k\times r_k}$, we have $\rank(\matP_k)=r_k$. 
Moreover, as $\bbE(\matZ_1^T\matZ_2/n) = \matU_1^T\bOmega_{1,2}\matU_2$ with unitary 
maps $\matU_1$ and $\matU_2$, the Weyl inequality implies that 
\bes
\hlam_1\le \frac{\lam_1(1+\eps_0)}{1-\eps_0},\quad \hlam_{\min} \ge \frac{\lam_{\min}(1-\eps_0)}{1+\eps_0}. 
\ees
Thus, (\ref{lm-1-2}) holds.  
As the conditions for $\lam_1<1$ and $\lam_{\min}>0$ follow from the positive-definiteness of $\bSigma$, 
the proof is complete. 
%\end{proof}

%\subsection{Proof of Theorem \ref{th:betal12-ns}}
%\begin{proof}
\medskip
{\bf Proof of Theorem \ref{th:betal12-ns}.}
The KKT conditions for the group Lasso asserts that 
\begin{align}\label{eq:kkt-ns}
\begin{array}{lc}
\dfrac{1}{n}\matX_{\Gj}^{T}(\by-\matX\hbbeta) = \omega_{j}\hbbetagj/\norm{\hbbetagj}{2}, & \hbbetagj \neq \bzero,\\[0.3cm]
\dfrac{1}{n}\norm{\matX_{\Gj}^{T}(\by-\matX\hbbeta)}{2} \leq \omega_{j},& \hbbetagj=\bzero.
\end{array}
\end{align}
Let $\bh = \hbbeta-\bbeta^*$. It follows that in the event $\calE$
\begin{align}\label{prf11}
\frac{\norm{\matXgj^{T}\matX\bh}{2}}{\omega_jn}
= \frac{\norm{\matXgj^{T}(\matX\hbbeta-\by+\bepsa)}{2}}{\omega_jn}
\leq 1 + \frac{\norm{\matXgj^{T}\bepsa}{2}}{\omega_jn}
\leq  \frac{2\xi}{\xi+1}.
\end{align}
It also follows from (\ref{eq:kkt-ns}) that in the event $\calE$ 
\bel{pf-th-5-3}
&& \bh_{G_j}^{T}\matX^{T}_{G_j}\matX\bh/n 
\cr &=& \bh_{G_j}^{T}\matX^{T}_{G_j}(\matX\hbbeta-\by+\bepsa)/n 
\cr &\le& \begin{cases} \omega_j\|\bh_{G_j}\|_2 + |\bh_{G_j}^{T}\matX^{T}_{G_j}\bepsa|/n,& j\in S^*,   
\cr - \omega_j\|\bh_{G_j}\|_2 + |\bh_{G_j}^{T}\matX^{T}_{G_j}\bepsa|/n,& j\not\in S^*,  
\end{cases}
\cr &\le& \begin{cases}  \omega_j\|\bh_{G_j}\|_22\xi/(\xi+1),\ j\in S^*,
\cr - \omega_j\|\bh_{G_j}\|_22/(\xi+1),\ j\not\in S^*.
\end{cases} 
\eel
Summing the above inequality over $j$, we have 
\bes
\|\matX\bh\|_2^2/n \le \frac{2\xi}{\xi+1}\sum_{j\in S^*}\omega_j\|\bu_{G_j}\|_2 
- \frac{2}{\xi+1}\sum_{j\not\in S^*}\omega_j\|\bu_{G_j}\|_2. 
\ees
This and (\ref{pf-th-5-3}) implies $\bh \in \scrC^{(G)}_-(\xi, \bomega, \Sstar)$. 
Thus, by (\ref{eq:SCIF}) and (\ref{prf11})
\bes
\norm{\matX\bh}{2}^{2}/n
&\le& \{2\xi/(\xi+1)\}\sum_{j\in \Sstar}\omega_{j}\norm{\hbbetagj-\bbeta_{\Gj}^{*}}{2}
\cr &\leq& \{2\xi/(\xi+1)\}\max_{j}\omega^{-1}_{j}\norm{\matXgj^{T}\matX\bh}{2}
\hbox{$\sum_{j\in S^*}$} \omega_j^2
/\{n\,\SCIF^{(G)}_1(\xi, \bomega, \Sstar,\Sstar)\}
\cr &\leq& \{2\xi/(\xi+1)\}^2\hbox{$\sum_{j\in S^*}$} \omega_j^2
/\{n\,\SCIF^{(G)}_1(\xi, \bomega, \Sstar,\Sstar)\}. 
\ees
Similarly, (\ref{eq:SCIF}) and (\ref{prf11}) yield
\bes
\Big(\hbox{$\sum_{j=1}^M$} \omega_j^2(\|\bh_{G_j}\|_2/\omega_j)^q\Big)^{1/q}
&\le& \big(\hbox{$\sum_{j\in S^*}$} \omega_j^2\big)^{1/q}
\max_{j}\omega^{-1}_{j}\norm{\matXgj^{T}\matX\bh}{2}
/\{\SCIF^{(G)}_q(\xi, \bomega, \Sstar)\}
\cr &\le& \{2\xi/(\xi+1)\}\big(\hbox{$\sum_{j\in S^*}$} \omega_j^2\big)^{1/q}
\{\SCIF^{(G)}_q(\xi, \bomega, \Sstar,T^*)\}. 
\ees

Finally, we prove (\ref{eq:Eprob-ns}).  
Let $\matQ_{G_j}$ be the orthogonal projection to the range of $\matX_{G_j}$. 
As $\bepsa \sim \sfN_{n}(\bzero, \sigma^{2}\matI_{n})$, 
$\|\matQ_{G_j}\bepsa/\sigma\|_2^2\sim \chi^2_{d_j'}$ with $d_j'=\rank(\matQ_{G_j})\le d_j$. 
Thus, it follows from %Lemma \ref{lem:chiconc2} 
the Gaussian concentration inequality that for any $0<\delta <1$, with probability at least $1-\delta$,
\[
\norm{\matXgj^{T}\bepsa}{2}/(\sigma \norm{\matXgj}{S})
\le \norm{\matQ_{G_j}\bepsa/\sigma}{2}
\leq \sqrt{n} \left\{\sqrt{d_{j}} + \sqrt{2\log(1/\delta)}\right\}.
\]
The result in (\ref{eq:Eprob-ns}) follows by an application of the union bound.
%\end{proof}

%\subsection{Proof of Lemma \ref{lem:partialdelsig}}
%\begin{proof}
\medskip
{\bf Proof of Lemma \ref{lem:partialdelsig}.}
For $\eta \ge 0$ define 
\bes
\calL_{\bomega}(\bbeta,\sigma,\eta) 
= \frac{\|\by-\matX\bbeta\|_2^2}{2n\sigma} + \frac{\sigma}{2}+\sum_{j=1}^M\omega_j\|\bbeta_{G_j}\|_2^{1+\eta}
+ \frac{\eta\sigma^2}{2}
\ees
and $\hbbeta(\sigma\bomega,\eta) = \argmin_{\bbeta}\ \calL_{\bomega}(\bbeta,\sigma,\eta)$. 
As $\calL_{\bomega}(\bbeta,\sigma,\eta)$ is convex in $(\bbeta,\sigma)$, the profile loss 
$\calL_{\bomega}(\hbbeta(\sigma\bomega,\eta),\sigma,\eta)$ is convex in $\sigma$ for all $\eta\ge 0$. 
Note that for $\eta>0$
\bes
&& \dfrac{\pa}{\pa\sigma}\calL_{\bomega}(\hbbeta(\sigma\bomega,\eta),\sigma,\eta) 
\cr &=& \left\{\dfrac{\pa}{\pa\btheta}\calL_{\bomega}(\btheta,\sigma,\eta)\Big{|}
_{\btheta=\hbbeta(\sigma\bomega,\eta)}\right\}^T
\dfrac{\pa\hbbeta(\sigma\bomega,\eta)}{\pa\sigma} + \dfrac{\pa}{\pa t}\calL_{\bomega}
(\hbbeta(\sigma\bomega),t,\eta)\Big|_{t=\sigma}
\cr &=& 1/2 - \|\by-\matX\hbbeta(\sigma\bomega,\eta)\|_2^2/(2n\sigma^2)+\eta\sigma
\ees
as all derivatives involved are continuous. Moreover, as 
$\calL_{\bomega}(\bbeta,\sigma)=\calL_{\bomega}(\bbeta,\sigma,0)$ is strictly convex in $\matX\bbeta$, 
\bes
\lim_{\eta\to 0+}\dfrac{\pa}{\pa\sigma}\calL_{\bomega}(\hbbeta(\sigma\bomega,\eta),\sigma,\eta) 
\to 1/2 - \|\by-\matX\hbbeta(\sigma\bomega)\|_2^2/(2n\sigma^2). 
\ees
Consequently, 
\bes
\calL_{\bomega}(\hbbeta(\sigma_2\bomega),\sigma_2)
- \calL_{\bomega}(\hbbeta(\sigma_1\bomega),\sigma_1)
&=& \lim_{\eta\to 0+}\int_{\sigma_1}^{\sigma_2}
\Big\{\dfrac{\pa}{\pa\sigma}\calL_{\bomega}(\hbbeta(\sigma\bomega,\eta),\sigma,\eta)\Big\} d\sigma
\cr &=& \int_{\sigma_1}^{\sigma_2}\Big\{1/2 - \|\by-\matX\hbbeta(\sigma\bomega)\|_2^2/(2n\sigma^2)\Big\} d\sigma. 
\ees
All other claims follow from the joint convexity of $\calL_{\bomega}(\bbeta,\sigma)$ and 
the strict convexity of the loss function in $\matX\bbeta$.
%\end{proof}

%\subsection{Proof of Theorem \ref{th:betasgrp}}
%\begin{proof}
\medskip
{\bf Proof of Theorem \ref{th:betasgrp}.}
We follow the proof in \cite{Sun2012}.
Let $t\ge \sigma^*/\sqrt{1+\tau_-}$ and $\bh_{\Gj}=\hbbetagj(t\bomega)-\bbetastar_{\Gj}$. 
%From the KKT conditions, it follows that 
%\[\matXgj^{T}\bepsa/n - \matXgj^{T}\matX\bh/n =t \omega_{j}\hbbetagj/\norm{\hbbetagj}{2}.\]
As the oracle noise level is 
$(\sigstar)^2=\norm{\by-\matX\bbetastar}{2}^{2}/n$, we have 
\bel{pf-1a}
(\sigstar)^2 - \norm{\by-\matX\hbbeta(t\bomega)}{2}^{2}/n 
= (\matX\bh)^{T}(2\bepsa-\matX\bh)/n = (\matX\bh)^{T}(\bepsa + \by-\matX\hbbeta(t\bomega))/n.
\eel
Suppose $\calE$ happens so that $\norm{\matXgj^{T}\bepsa}{2}/n\le t\omega_j(\xi-1)/(\xi+1)$. 
It follows that 
\bes
\left|(\matX\bh)^{T}\bepsa/n\right| = \left|\sum^{M}_{j=1}\bh^{T}_{\Gj}\matXgj^{T}\bepsa/n\right| 
\le \frac{\xi-1}{\xi+1}\sum^{M}_{j=1}t\omega_{j}\norm{\bh_{\Gj}}{2}.
\ees
Moreover, the KKT condition implies 
\[
\left|\bh^{T}\matX^{T}(\by-\matX\hbbeta(t\bomega))/n\right|  
=\left|\sum^{M}_{j=1}\bh_{\Gj}^{T}\matXgj^{T}(\by-\matX\hbbeta(t\bomega))/n\right|
\leq \sum^{M}_{j=1}t\omega_{j}\norm{\bh_{\Gj}}{2}.
\]
As $(\matX\bh)^{T}(2\bepsa-\matX\bh)/n\le 2(\matX\bh)^{T}\bepsa/n$, 
inserting these inequalities to (\ref{pf-1a}) yields
\begin{align*}
-\left(\frac{\xi-1}{\xi+1} + 1\right) \sum^{M}_{j=1}t\omega_{j}\norm{\bh_{\Gj}}{2}
\leq {\sigstar}^{2} - \norm{\by-\matX\hbbeta(t\bomega)}{2}^{2}/n
\leq 2 \frac{\xi-1}{\xi+1}\sum^{M}_{j=1}t\omega_{j}\norm{\bh_{\Gj}}{2}.
\end{align*}
A rescaled version $\hbbeta(t\bomega)$ can be written as 
\bes
\frac{\hbbeta(t\bomega)}{t} = \argmin_{\bb}\left\{\dfrac{\norm{\by/t-\matX\bb}{2}^{2}}{2n} 
 + \sum^{M}_{j=1}\omega_{j}\norm{\bb_{G_j}}{2} \right\}
\ees
as the group Lasso estimator with target $\bbetastar/t$ and noise vector $\bepsa/t$. 
As $t\ge \sigma^*/\sqrt{1+\tau_-}$, the condition of Theorem \ref{th:betal12-ns} is 
satisfied with the rescaled noise $\bepsa/t$, so that  
\[
t^{-1}\sum^{M}_{j=1}\omega_{j}\norm{\bh_{\Gj}}{2} = \sum^{M}_{j=1} \omega_{j}\norm{\hbbeta_{G_j}(t\bomega)/t-\bbetastar_{G_j}/t}{2} < \mu(\bomega,\xi).
\]
As $\tau_- = 2\mu(\bomega,\xi)(\xi-1)/(\xi+1)$ and $\tau_+ = \mu(\bomega,\xi)\{(\xi-1)/(\xi+1)+1\}$, we have 
\begin{align*}
-\tau_+t^2 = 
-\left(\frac{\xi-1}{\xi+1} + 1\right)t^2\mu(\bomega,\xi)
< {\sigstar}^{2} - \norm{\by-\matX\hbbeta(t\bomega)}{2}^{2}/n
< 2 \frac{\xi-1}{\xi+1}t^2\mu(\bomega,\xi) = \tau_-t^2. 
\end{align*}
The upper bound above for $t=\sigma^*/\sqrt{1+\tau_-}$ implies 
\begin{align*}
t^{2} - \norm{y-\matX\hbbeta(t\bomega)}{2}^{2}/n < t^{2} - \sigstar^{2} + \tau_- t^2 = 0, 
\end{align*}
so that $\hsigma > t =\sigma^*/\sqrt{1+\tau_-}$ by Lemma \ref{lem:partialdelsig}. 
Similarly, the lower bound yields $\hsigma < \sigma^*/\sqrt{1-\tau_+}$. 

As $\hsigma > \sigma^*/\sqrt{1+\tau_-}$, the error bounds in Theorem \ref{th:betal12-ns} 
holds for $\{\by/\hsigma,\bbeta^*/\hsigma,\hbbeta/\hsigma\}$, which implies 
(\ref{sgl-pred}) and (\ref{sgl-est}) due to $\hsigma < \sigma^*/\sqrt{1-\tau_+}$. 
When (\ref{eq:mod1}) holds with Gaussian error, 
$|\hsigma/\sigstar-1|=o_P(\mu(\bomega,\xi))=o_P(n^{-1/2})$ 
by (\ref{sgl-sigma}) and the condition on $\mu(\bomega,\xi)$, so that 
(\ref{eq:teststatsigma}) follows from the 
central limit theorem for $\sigstar/\sigma \sim \chi_n/\sqrt{n}$. 
%Now note that we can write 
%\[\sqrt{n}(\hsigma/\sigma - 1) = \sqrt{n}(\hsigma/\sigstar -1) (\sigstar/\sigma) + \sqrt{n}(\sigstar/\sigma-1),\]
%whence it follows that $|\sqrt{n}(\hsigma/\sigma - 1)  - \sqrt{n}(\sigstar/\sigma-1)| \leq \sqrt{n}\mu(\bomega,\xi)(\sigstar/\sigma)$.
%Since by law of large numbers, $\sqrt{n}(\sigstar/\sigma-1) \toD \sfN(0,1/2)$, under the assumption that $\mu(\bomega, \xi) =o(1/\sqrt{n})$, we have the proof.
 
% Finally note that, since $\sigstar=\norm{\bepsa}{2}/\sqrt{n}$, from Lemma \ref{lem:chiconc1} it follows that $\sigstar \geq \sigma(1-\sqrt{(2/n)\log(6/\delta)})$ with probability at least $1-\delta/2$. Similarly, from Lemma \ref{lem:chiconc2} it follows that with probability at least $1-\delta/2$, $\norm{\matXgj^{T}\bepsa}{2}/n \leq \sigma (\sqrt{d_{j}/n} + \sqrt{(2/n)\log (6/\delta)})$. By union bound, with probability at least $1-\delta$, 
% \begin{align*}
% (1/\sigstar)\norm{\matXgj^{T}\bepsa}{2}/n \leq \sqrt{2}\dfrac{\sqrt{d_{j}/n + (2/n)\log (6M/\delta)}}{1-\sqrt{(2/n)\log(6M/\delta)}}\quad \forall\ j=1,\cdots, M.
% \end{align*}
% If further we take $n > 8\log(6M/\delta)$, then $1-\sqrt{(2/n)\log(6M/\delta)}\geq 1/2$. Thus the final result follows by taking $A\geq 2\sqrt{2}\{(\xi+1)/(\xi-1)\}$ and the specified choice of $\omega_{j}$ for all $j$.
%\end{proof}

It remains to prove (\ref{th-6-5}). Let $\bu^*=\bepsa/\|\bepsa\|_2$, $\matQ_{G_j}$ be the orthogonal 
projection to the range of $\matX_{G_j}$, $d_j'=\rank(\matQ_{G_j})$, and $f(\bu^*)=\|\matQ_{G_j}\bu^*\|_2$. 
As $f(\bu^*)=1$ for $n=1$, we assume $n\ge 2$ without loss of generality. 
The vector $\bu^*$ is uniformly distributed in the sphere $\bbS^{n-1}$ 
and $f(\bu^*)$ is a unit Lipschitz function of $\bu^*$ with median $\sqrt{m_{d_j',n}}\le \sqrt{m_{d_j,n}}$. 
As $\sigma^*=\|\bepsa\|_2/\sqrt{n}$, 
$\|\matX_{G_j}^{T}(\by-\matX\bbeta^*)/(n\sigma^*)\|_2/\|\matX_{G_j}/\sqrt{n}\|_S \le f(\bu^*)$.  
Thus, for $t>0$ and $n\ge 2$, 
\bes
\bbP\left\{\|\bQ_{G_j}\bu^*\|_2\ge \sqrt{m_{d_j,n}}+\frac{t}{\sqrt{n-3/2}}\right\}
\le e^{(4n-6)^{-2}}\bbP\Big\{ \sfN(0,1) > t\Big\}\le e^{-t^2/2}
%\bbP\left\{\|\bQ_{G_j}\bu^*\|_2\ge\sqrt{d_j/n}+\frac{e^{(4n-6)^{-2}}}{\sqrt{\pi(2n-3)}}+\frac{t}{\sqrt{n-3/2}}\right\}
%\le e^{(4n-6)^{-2}}\bbP\Big\{ N(0,1) > t\Big\}
\ees
by the L\'evy concentration inequality as in Lemma 17 of \cite{SunZ13}. 
It follows that $\bbP(\calE) \geq 1-\delta$ by the union bound when 
$(\xi-1)\omega_j/\{(\xi+1)\sqrt{1+\tau_-}\} \ge \|\matX_{G_j}/\sqrt{n}\|_S\omega_{*,j}$. 
Now, consider $\omega_j=A\|\matX_{G_j}/\sqrt{n}\|_S\omega_{*,j}$. 
Let $\tau_* = 2\mu(\bomega_*,\xi)(\xi-1)/(\xi+1)$. 
It follows from (\ref{eq:cone1}) and (\ref{eq:SCIF}) that $\mu(\bomega, \xi) = A^2\mu(\bomega^*, \xi)$, 
so that $\tau_-=A^2\tau_*$. Consequently, 
\bes
\frac{(\xi-1)\omega_j}{(\xi+1)\sqrt{1+\tau_-}\|\matX_{G_j}/\sqrt{n}\|_S\omega_{*,j}} 
= \frac{(\xi-1)A}{(\xi+1)\sqrt{1+A^2\tau_*}} \ge 1
\ees
if and only if $A \ge \{(\xi+1)/(\xi-1)\}\big/\{1 - \{(\xi+1)/(\xi-1)\}^2 \tau_*\}^{1/2}=A_*$. 
Finally, we note that $\sqrt{m_{d_j,n}} \le \bbE f(\bu^*)+e^{(4n-6)^{-2}}\bbE|\sfN(0,1/(n-3/2))|/2 
\le(d_j/n)^{1/2}+n^{-1/2}$. 
%\end{proof}

\medskip
{\bf Proof of Proposition \ref{prop-2}.}
%An shifting inequality 
%(\citeauthor{CaiWX10}, \citeyear{CaiWX10}; \citeauthor{YeZ10}, \citeyear{YeZ10}, Eq. (63)):
%For $b= t_k-t_{k-1}$ and $a<a_q$, i.e. $ab/a_q \le t_k-t_{k-1}$, 
%\bes
%& \Big(\int_a^{1+a} h^q(x)dx\Big)^{1/q} \le \max\big(1,(a_q/a)^{1-1/q}\big)\int_0^1 h(x)dx
%\cr & \Big(\int_{ba}^{b+ba} h^q(t/b)dt/b\Big)^{1/q} \le \max\big(1,(a_q/a)^{1-1/q}\big)\int_0^b h(t/b)d(t/b)
%\cr & \Big(\int_{ba}^{b+ba} h^q(t)dt\Big)^{1/q} \le \max\big(1,(a_q/a)^{1-1/q}\big)b^{1/q-1}\int_0^b h(t)dt
%\cr & \Big(\int_{t_{k-1}}^{t_k} h^q(t)dt\Big)^{1/q} 
%\le (ab/a_q)^{1/q-1}\int_{t_{k-1}-ab}^{t_k-ab} h(t)dt
%\ees
%\subsection{Proof of Proposition \ref{prop-2}}
%\begin{proof} (i)
(i) We prove that for every $\bu\in \scrC^{(G)}(\xi,\bomega,T)$, 
there exists a non-increasing nonnegative function $h(x)$ and 
$x_0t_0\le t_1 < x_0t_0+\max_j f_j^2$ such that 
\bel{pf-prop-2-0b} 
& \|\bu_{G_T}\|_{(q)}^q = \sum_{j\in T}f_j^{2}(\norm{\bu_{\Gj}}{2}/f_j)^q 
\le \int_0^{t_0}h^q(x)dx,  
\\ \label{pf-prop-2-0c} 
& \|\bu\|_{(q)}^q =\int_0^\infty h^q(x)dx 
\le \big\{1+\big(1+\xi)a_q^{1-1/q}\big\}\big(\int_0^{t_0} h^q(x)dx\big)^{1/q},
\\ \label{pf-prop-2-0d} 
& \hbox{$\max_{j\le M}$}\big[\big\|\matX_{G_j}\matX\bu\big\|_2/(nf_j)\big]t_1^{1/q}
\ge \kappa_{q,1}(\xi,t_0,x_0,y_0)\hbox{$\big(\int_0^{t_1}h^q(x)dx\big)^{1/q}$},
\\ \label{pf-prop-2-0e} 
& \hbox{$\max_{j\le M}$}
\big[\norm{\matXgj^{T}\matX\bu}{2}/(nf_j)\big]\int_0^{t_1}h(x)dx
\ge \kappa_{2,1}(\xi,t_0,x_0,y_0)\int_0^{t_1}h^2(x)dx,
\\ \label{pf-prop-2-0f} & \|\matX\bu\|_2^2/n
\ge \kappa_{2,2}(\xi,t_0,x_0,y_0)\int_0^{t_1}h^2(x)dx. 
\eel
Moreover, for $\bu\in \scrC_-^{(G)}(\xi,\bomega,T)$, 
\bel{pf-prop-2-0g}
& \hbox{$\max_{j\le M}$}
\big[\norm{\matXgj^{T}\matX\bu}{2}/(nf_j)\big]\int_0^{t_0}h(x)dx
\ge \kappa_{2,1}(\xi,t_0,x_0,y_0)\int_0^{t_1}h^2(x)dx. 
\eel

In fact, as $\omega_j\propto f_j$, 
(\ref{prop-2-1a}) and (\ref{prop-2-1b}) follow 
from (\ref{eq:RE}), (\ref{eq:CC}), (\ref{pf-prop-2-0b}), (\ref{pf-prop-2-0c}) and (\ref{pf-prop-2-0f}), 
(\ref{prop-2-2}) follows from (\ref{eq:CIF}), (\ref{pf-prop-2-0b}), (\ref{pf-prop-2-0c}) and (\ref{pf-prop-2-0d}), 
and (\ref{prop-2-3}) follows from (\ref{eq:SCIF}), (\ref{pf-prop-2-0b}), (\ref{pf-prop-2-0c}) and (\ref{pf-prop-2-0g}). 
As these steps of the proof are similar, we only provide the following example: 
\bes
\SCIF^{(G)}_q(\xi,\bomega,T,T^*)
\ge \inf_h\frac{t_0^{1/q}\kappa_{2,1}(\xi,t_0,x_0,y_0)\int_0^{t_1}h^2(x)dx}
{\int_0^{t_0}h(x)dx\big(\int_0^\infty h^q(x)dx\big)^{1/q}}
\ge \frac{\kappa_{2,1}(\xi,t_0,x_0,y_0)}{1+\big(1+\xi)a_q^{1-1/q}} 
\ees
for $1\le q\le 2$ with an application of the H\"older inequality. 
%$\big(\int_0^{t_0}h^q(x)dx\big)^{1/q}\le t_0^{1/q-1/2}\big(\int_0^{t_0}h^2(x)dx\big)^{1/2}$, $1\le q\le 2$. 

Let us prove (\ref{pf-prop-2-0b})-(\ref{pf-prop-2-0g}) for a fixed $\bu\in \scrC^{(G)}(\xi,\bomega,T)$. 
Relabelling the groups if necessary, we assume without loss of generality 
that $\|\bu_{G_j}\|_2/f_j\ge \|\bu_{G_{j+1}}\|_2/f_{j+1}$ 
for all $1\le j<M$. Let $s_0=0$ and $s_j =\sum_{\ell=1}^j f_\ell^2$ for $1\le j\le M$. 
Define $h(x) = \|\bu_{G_j}\|_2/f_j$ for $s_{j-1}< x\le s_j$, $1\le j\le M$, 
and $h(x)=0$ for $x>s_M$. The identities in (\ref{pf-prop-2-0b}) and (\ref{pf-prop-2-0c}) follow from 
\bel{pf-prop-2-1}
\int_{s_{j-1}}^{s_j} h^q(x)dx = f_j^{2}(\norm{\bu_{\Gj}}{2}/f_j)^q. 
\eel
As $t_0=\sum_{j\in T}f_j^2$ and $h(x)$ is nondecreasing in $(0,\infty)$, 
$\sum_{j\in T}f_j^2(\|\bu_{G_j}\|_2/f_j)^q \le \int_0^{t_0}h^q(x)dx$. 
This gives the inequality in (\ref{pf-prop-2-0b}). 
It follows from (\ref{pf-prop-2-0b}) and the identity in (\ref{pf-prop-2-0c}) that 
$\int_0^\infty h(x)dx \le (1+\xi)\int_0^{t_0}h(x)dx$, 
so that by the shifting inequality 
(\citeauthor{CaiWX10}, \citeyear{CaiWX10}; \citeauthor{YeZ10}, \citeyear{YeZ10}, Eq. (62)) 
\bes
\Big(\int_{t_0}^\infty h^q(x)dx\Big)^{1/q}
\le (a_q/t_0)^{1-1/q}\int_0^\infty h(x)dx
\le \big(1+\xi)(a_q/t_0)^{1-1/q}\int_0^{t_0} h(x)dx. 
\ees
Thus, the inequality in (\ref{pf-prop-2-0c}) follows with an application of the H\"older inequality. 
%\bes
%\Big(\int_0^\infty h^q(x)dx\Big)^{1/q}
%&\le& \big(1+\xi)(a_q/t_0)^{1-1/q}\int_0^{t_0} h(x)dx+\Big(\int_0^{t_0} h^q(x)dx\Big)^{1/q}. 
%\cr &\le& \big\{1+\big(1+\xi)a_q^{1-1/q}\big\}\Big(\int_0^{t_0} h^q(x)dx\Big)^{1/q}. 
%\ees

The proof of (\ref{pf-prop-2-0d}) is a discrete version that of (\ref{pf-prop-2-0c}). 
Let 
\bes
g_1 = \inf\Big\{j \ge 0: s_j \ge x_0t_0\Big\},\ t_1 = s_{g_1},
\ees
with the convention $\inf\emptyset = M+1$, and for $k>1$, 
\bes
g_k = \inf\Big\{j \ge g_{k-1}: s_j \ge t_{k-1}+y_0t_0\Big\},\ t_k = s_{g_k}.
\ees
Recall that $t_0 = \sum_{j\in T}f_j^2$, $x_0 \ge 1$ and $y_0 \le x_0/a_q$. 
It follows from (\ref{pf-prop-2-1}) that 
\bel{pf-prop-2-2}
\sum_{j=1}^{g_k} f_j^{2}(\norm{\bu_{\Gj}}{2}/f_j)^q= \int_0^{t_k}h^q(x)dx,\ k\ge 1. 
\eel
As $h(x)$ is non-increasing in $x$ and $(t_k-t_{k-1})\wedge (t_1/a_q) \ge y_0t_0$, 
another application of the shifting inequality 
(\citeauthor{CaiWX10}, \citeyear{CaiWX10}; \citeauthor{YeZ10}, \citeyear{YeZ10}, Eq. (63)) yields 
\bel{pf-prop-2-6}
&& \sum_{k\ge 2}\Big(\int_{t_{k-1}}^{t_k}h^q(x)dx\Big)^{1/q}
\cr &\le& \sum_{k\ge 2} (y_0t_0)^{1/q-1}
\int_{t_{k-1}-a_qy_0t_0}^{t_k-a_qy_0t_0}h(x\vee t_1)dx
\cr &=& (y_0t_0)^{1/q-1}\int_{t_1-a_qy_0t_0}^\infty h(x\vee t_1)dx
\cr &\le&(y_0t_0)^{1/q-1}\Big(\xi\int_0^{t_0}h(x)dx +\big(t_0-(t_1-a_qy_0t_0)\big)_+h(t_1)\Big)
\cr &\le& \Big(\int_0^{t_1}h^q(x)dx\Big)^{1/q}
\Big(\xi y_0^{1/q-1} +\big(t_0+a_qy_0t_0-t_1\big)_+(y_0t_0)^{1/q-1}t_1^{-1/q}\Big)
\cr &\le & \Big(\int_0^{t_1}h^q(x)dx\Big)^{1/q}
\Big(\xi y_0^{1/q-1} +\big(1+a_qy_0-x_0\big)_+y_0^{1/q-1}x_0^{-1/q}\Big)
\cr &=& \Big(\int_0^{t_1}h^q(x)dx\Big)^{1/q}
\big(\rho_q - \kappa_{q,1}(\xi,t_0,x_0,y_0)\big)\big/\theta_q(x_0t_0,y_0t_0). 
\eel
Let $B_1= \{1,\ldots,g_1\}$ and $B_k= \{g_{k-1}+1,\ldots,g_k\}$ for $k\ge 2$. 
Let 
\bes
\bv = \argmax_{\bw}\Big\{\bw^T\matX^T\matX_{G_{B_1}}\bu_{G_{B_1}}/n: \supp(\bw)\subseteq G_{B_1}, 
\|\bw\|_{(q/(q-1))} =1\Big\}. 
\ees
As $\sum_{j=1}^{g_1-1}f_j^2\le x_0t_0$, it follows from (\ref{rho}) and (\ref{pf-prop-2-2}) that 
\bes
\bv^T\matX^T\matX_{G_{B_1}}\bu_{G_{B_1}}/n
\ge \rho_q(x_0t_0)\|\bu_{G_{B_1}}\|_{(q)} = \Big(\int_0^{t_1}h^q(x)dx\Big)^{1/q}\rho_q(x_0t_0). 
\ees
By (\ref{theta}), 
$\big|\bv^T\matX^T\matX_{G_{B_k}}\bu_{G_{B_k}}\big)\big|
\le \theta_q(x_0t_0,y_0t_0)\big\|\bu_{G_{B_k}}\big\|_{(q)}$, 
so that by (\ref{pf-prop-2-2}) and (\ref{pf-prop-2-6}),
\bes
\bv^T(\matX^T\matX/n)\bu 
&\ge& \Big(\int_0^{t_1}h^q(x)dx\Big)^{1/q}\rho_q(x_0t_0) 
- \sum_{k>1}\theta_q(x_0t_0,y_0t_0)\big\|\bu_{G_{B_k}}\big\|_{(q)} 
\cr &=& \Big(\int_0^{t_1}h^q(x)dx\Big)^{1/q}\rho_q(x_0t_0) 
- \sum_{k>1}\theta_q(x_0t_0,y_0t_0)\Big(\int_{t_{k-1}}^{t_k}h^q(x)dx\Big)^{1/q}
\cr &\ge& \Big(\int_0^{t_1}h^q(x)dx\Big)^{1/q}\kappa_{q,1}(\xi,t_0,x_0,y_0).  
\ees
This yields (\ref{pf-prop-2-0d}) via 
\bes
\bv^T(\matX^T\matX/n)\bu 
&\le& \hbox{$\sum_{j\in B_1}$} f_j^2\big(\|\bv_{G_j}\|_2/f_j\big)
\hbox{$\max_{j\le M}$}\big\|\matX_{G_j}\matX\bu\big\|_2/(nf_j)
\cr &\le& \|\bv\|_{(q/(q-1))} \big(\hbox{$\sum_{j\in B_1}$} f_j^2\big)^{1/q}
\hbox{$\max_{j\le M}$}\big\|\matX_{G_j}\matX\bu\big\|_2/(nf_j)
\cr & = & t_1^{1/q}\hbox{$\max_{j\le M}$}\big\|\matX_{G_j}\matX\bu\big\|_2/(nf_j). 
\ees

For $q=2$, $\rho_q(s)$ is the group-sparse eigenvalue of 
the Gram matrix as explained below (\ref{theta}), so that $\rho_2(s)$ 
is attained with $\bv_{G_B} = \bu_{G_B}/\|\bu_{G_B}\|_2$. 
This gives (\ref{pf-prop-2-0e}) with the following modification of the proof of (\ref{pf-prop-2-0d}): 
\bes
\bv^T(\matX^T\matX/n)\bu 
&\le& \hbox{$\sum_{j\in B_1}$} f_j^2\big(\|\bu_{G_j}\|_2/f_j\big)
\big\{\hbox{$\max_{j\le M}$}\big\|\matX_{G_j}\matX\bu\big\|_2/(nf_j)\big\}\big/\|\bu_{G_{B_1}}\|_2 
\cr &\le& \int_0^{t_1}h(x)dx\Big(\int_0^{t_1}h^2(x)dx\Big)^{-1/2}
\hbox{$\max_{j\le M}$}\big\|\matX_{G_j}\matX\bu\big\|_2/(nf_j). 
\ees
Similarly, (\ref{pf-prop-2-0f}) follows from 
\bes
\|\matX\bu\|_2^2/n 
&\ge& \|\matX_{G_{B_1}}\bu_{G_{B_1}}\|_2^2/n 
+ 2\bu_{G_{B_1}}^T\matX_{G_{B_1}}^T\big(\matX\bu - \matX_{G_{B_1}}\bu_{G_{B_1}}\big)/n
\cr &\ge& \kappa_{2,2}(\xi,t_0,x_0,y_0)\int_0^{t_1}h^2(x)dx. 
\ees

Finally, for $\bu\in \scrC^{(G)}_{-}(\xi,\bomega,T)$, we have (\ref{pf-prop-2-0g}) via (\ref{pf-prop-2-0b}) and 
\bes
(\matX_{G_{B_1}}\bu_{G_{B_1}})^T(\matX\bu)/n
\le \hbox{$\sum_{j\in T}$}  \bu_{G_j}^T\matX_{G_j}\matX\bu/n
\le \hbox{$\sum_{j\in T}$}f_j \|\bu_{G_j}\|_2\max_j\big\|\matX_{G_j}\matX\bu\big\|_2/(nf_j). 
\ees

(ii) Let $f_j = \omega_j/C_n$. Consider the event 
$c_*(1-\eps_0)\le \|\matX_{G_j}/\sqrt{n}\|_S\le (1+\eps_0)c^*$ for all $j\le M$, 
in which $f_j\asymp |G_j|^{1/2}+\sqrt{2\log(M/\delta)}$. 
Let $g^* = \max\big\{|B|: |B\setminus S|\le 2,\|\bff_S\|_2^2 < (x_0\vee y_0)t_0\big\}$ 
be the largest number of groups involved in the definition of $\rho_-(x_0t_0)$ 
and $\theta(x_0t_0,y_0t_0)$, and 
$s^* = \max\big\{|G_B|: |B\setminus S|\le 2,\|\bff_S\|_2^2 < (x_0\vee y_0)t_0\big\}$ 
be the largest number of variables involved. 
As $f_j\asymp |G_j|^{1/2}+\sqrt{2\log(M/\delta)}$ and $(x_0,y_0)$ is fixed, we have 
\bes
s^* + 2g^*\log(M/\delta) \lesssim t_0 + \max_j f_j^2 \lesssim n_*
\ees
with $n_* =\max_{j\le M}\big\{|G_j|+\log(M/\delta)\big\} + |G_T|+|T|\log(M/\delta)$. 

The conclusion follows from part (i) and Lemma \ref{lm-subGaussian}. Let 
\bes
\Omega_n &=& \Big\{c_*(1-\eps_0)\le \|\matX_{G_j}/\sqrt{n}\|_S\le (1+\eps_0)c^*\ \forall j, 
\cr && \qquad \rho_2(x_0t_0) \le (1-\eps_0/2)\lam_{\min}(\bSigma), 
\theta_2(x_0t_0,y_0t_0) \ge (1+\eps_0)c^*\Big\}. 
\ees
Let $\matB_1$ and $\matB_2$ be the orthogonal projections to the subspace of vectors 
$\bv\in\bbR^p$ with support sets $G_{B_1}$ and $G_{B_2}$ respectively, 
% in Lemma \ref{lm-subGaussian}, 
$t = (2g^*+2)\log(M/\delta)$ and $\eps_0 = C_1\sqrt{t/n+s^*/n}$ with a sufficiently large $C_1$. 
Since $\{s^* + 2g^*\log(M/\delta)\}/n$ is small for small $a_0$, Lemma \ref{lm-subGaussian} yields 
$\bbP\big\{\Omega_n \big\} \le {M\choose g^*}^2 e^{-t} \le (\delta/M)^2$. 
For $x_0=a_qy_0$ and sufficiently large $y_0$,  
$\kappa_{2,m}(\xi,t_0,x_0,y_0)\ge \rho_2(x_0t_0)(1-\eps_0/2)$ in $\Omega_n$. 
The conclusions of part (ii) then follow from part (i). 
%\end{proof}

%%%%%%%%%%%%%%%%%%%%%%%%%%%%%%%%%%%%%%%%%%%%%%%%%%%%%%%%%%%%%%%%%%%%%%%%%%%%%%%%%%%%%%%%%%%%%%%%%%%%%%%%%%%%%%%%%%%%%%%%%%%%%%%%%%%%%%%%%%%%%%%%%%%%%%%%%%%%%%%%%%%%%%%%%%%%%%%%%%%%%%%%%%%%%%%%%%%%%%%%%%%%%%%%%%%%%%%%%%%%%%%%%%%%%%%%%%%%%%%%%%%%%%%%%%%%%%%%%%%%%%%%%%%%%%%%%%%%%%%%%%%%%%%%%%%%%%%%%%%%%%%%%%%%%%%%%%%%%%%%%%%%%%%%%%%%%%%%
\bibliographystyle{imsart-nameyear}
%\bibliography{/Users/Riik/Documents/RiikStuff/Academia/Papers/MyPapers/Auxiliary/MyBibliography}
\bibliography{GroupInf-Revision-20160218}
%\bibliography{MyBibliography}
\end{document}